\newtheorem{thm}{Theorem}[section]
\newtheorem{lem}[thm]{Lemma}
\theoremstyle{definition}
\newtheorem{defn}{Definition}[section]
\newcommand{\pprec}{\prec\mkern-10mu\prec}
\begin{document}

\markboth{Qi Zhang}{Global well-posedness for the nonlinear PAM equation}

\catchline{}{}{}{}{}

\title{Global well-posedness for the nonlinear generalized parabolic Anderson model equation }

\author{Qi Zhang}

\address{Yanqi Lake Beijing Institute of Mathematical Sciences and Applications,\\ Beijing, 101408, China\\
Yau Mathematical Sciences Center, Tsinghua University,\\ Beijing 100084, China\\
qzhang@bimsa.cn}

\maketitle

\begin{history}
\received{(Day Month Year)}
\revised{(Day Month Year)}
\end{history}

\begin{abstract}
We study the global existence of the singular nonlinear parabolic Anderson model equation on $2$-dimensional tours $\mathbb{T}^2$. The method is based on paracontrolled distribution and renormalization. After split the original nonlinear parabolic Anderson model equation into two simple equations, we prove the global well-posedness by some a priori estimates and smooth approximations. Furthermore, we prove the uniqueness of the solution by using classical energy estimates.
\end{abstract}

\keywords{singular SPDEs, paracontrolled distribution, parabolic Anderson model}

\ccode{AMS Subject Classification: 35A01, 35A02 , 60H17}

\section{Introduction}

We study the following $2$-dimensional nonlinear parabolic Anderson model (PAM) equation
\begin{equation}\label{GPAM}
    \partial_t u + (- \Delta+\mu) u = f(u) + u \diamond \xi, \quad u(0)=u_0.
\end{equation}
where $\mu>0$, $u:\mathbb{R}^{+}\times \mathbb{T}^2 \rightarrow \mathbb{R}$, the nonlinear function $f(s) = \sum_{1\leq i \leq k-1} a_i s^i$ is  a polynomial function from $\mathbb{R}$ to $\mathbb{R}$, and $\xi$ is a spatial white noise on the $2$-dimensional torus $\mathbb{T}^2=(\mathbb{R}/\mathbb{Z})^2$.

The Anderson model was originally introduced by Anderson [\refcite{A1958}] as a mathematical description for the electron motion in disordered medium, such as a random potential. In this famous work, Anderson showed that the electron is trapped and remain localized in a random medium. This phenomenon is called Anderson localization in condensed matter physics.

When the spatial dimension $n\geq 2$, the parabolic Anderson model equation is a kind of typical singular stochastic partial differential equation. Even though the classical stochastic partial differential equation theory has great achievements in recent decades, many stochastic partial differential equations from physics are singular and hard to deal with by the classical methods, such as the parabolic Anderson model equation, the Kardar–Parisi–Zhang (KPZ) equation, and the $\Phi^4_d$ equation. The difference between singular stochastic partial differential equations and classical stochastic partial differential equations is that the noise in singular stochastic partial differential equations is very rough. Thus the rigorous interpretation of singular stochastic partial differential equations had been an open problem for a long time.

In order to study singular stochastic partial differential equations, some new mathematical theories, such as regularity structures by Harier [\refcite{H2014}] or paracontrolled distributions by Gubinelli, Imkeller and Perkowski [\refcite{GIP2015}], had been developed in recent years. Paracontrolled distributions and regularity structures allow a pathwise description of the singular stochastic partial differential equations. In this paper, we study the 2-dimensional nonlinear parabolic Anderson model equation in the paracontrolled distribution frameworks. Comparing with regularity structures, the paracontrolled distribution approach relies on classical PDE techniques, including Littlewood-Paley decomposition, Besov space, paraproduct calculus, and develops on ideas from the theory of controlled rough paths. So it is natural and easy to use some classical PDE tools to study the parabolic Anderson model equation in paracontrolled distribution framework.

The discrete parabolic Anderson model has been well understood during the past decades, has seen in the surveys [\refcite{CM1994},\refcite{K2015}], and references therein. The well-posedness of a continuous parabolic Anderson model equation was also given in [\refcite{GIP2015}, \refcite{H2014}, \refcite{HL2015}] by different methods, including regularity structures, paracontrolled distribution, and the transformation method and a elaborate renormalisation procedure. Parabolic equations with other types of purely spatial noise potentials were studied in [\refcite{H2002}, \refcite{HW2021}, \refcite{LS2009}] by Wiener chaos decomposition. We also refer to [\refcite{GH2017} \refcite{KL2017}] for some solution properties of the parabolic Anderson model equation. In [\refcite{CGP2015}], Chouk, Gairing and Perkowski showed that the solution of a continuous parabolic Anderson model is the universal continuum limit of the 2-dimensional lattice discrete Anderson model.

The parabolic Anderson model equation can also be viewed as a heat equation with a spatial white noise potential $\xi$. Thus the parabolic Anderson model equation is also a linear parabolic equation with the Anderson Hamiltonian $\mathscr{H}$,  defined as $\mathscr{H}u:=\Delta u + u\diamond\xi$. The construction and spectrum of the Anderson Hamiltonian on $\mathbb{T}^2$ and $\mathbb{T}^3$ were studied by Allez and Chouk [\refcite{AC2015}] and Labb\'e [\refcite{L2019}]. The semilinear Schr\"{o}dinger equations and wave equations for the Anderson Hamiltonian in two and three dimensions on $\mathbb{T}^2$ and $\mathbb{T}^3$ have been considered in [\refcite{GUZ2020}]. In [\refcite{ZD2021}], we also consider the variation problem associated with the Anderson Hamiltonian  in the paracontrolled distribution framework.

Even though the local well-posedness results of paracontrolled solution for generalized parabolic Anderson model  equation were given in [\refcite{BDH2015}, \refcite{GIP2015}] by fixed point argument, there are still some difficulties to obtain the global well-posedness in paracontrolled approach. In recent years, the global well-posedness of the $\Phi^4_3$ equation was proved in [\refcite{AK17}, \refcite{GH2019}, \refcite{MW2017}]. In these works, the norm of solution was estimated by using the dissipative property of nonlinear term.

In this present chapter, we study the global well-posedness of the nonlinear parabolic Anderson model equation in paracontrolled distribution framework. We assume that the nonlinear term $f(u)$ satisfies the following dissipative assumption:
For every $s\in\mathbb{R}$,
\begin{align}\label{assumf}
    -C_0- C_1|s|^k \leq & f(s)s \leq C_0 - C_2|s|^k, \quad k\geq 3,  \nonumber  \\
     f'(s) \leq & l,
\end{align}
where $C_0, C_1, C_2, l>0$ are positive constants.

In order to define the singular term $u\diamond\xi$, we carry out the renormalization procedure and paracontrolled distribution. Then we decompose solution into two parts: $u = \phi + \psi$, and we use a localization technique which developed from [\refcite{GH2019}] to split the original singular stochastic partial differential equation in two simple equations:
\begin{equation}\label{decGPAM1}
  \left\{
   \begin{aligned}
   & \partial_t \phi + (- \Delta+\mu)\phi = \Phi, \quad \phi(0) = \phi_0=u_0,\\
   & \partial_t \psi + (- \Delta+\mu)\psi = f(\psi)+ \Psi, \quad \psi(0)=0,
   \end{aligned}
   \right.
\end{equation}
where $\Phi$ contains all of irregular but linear terms, and $\Psi$ contains all the regular terms and the nonlinear terms. By this way, we can handle the irregular part $\phi$ by paracontrolled distribution arguments, and we can analysis the regular part $\psi$ by some classical PDE methods.
Since the regularity of initial value $u_0$ is low, we also introduce a time weight $\tau(t):=1-e^{-t}$ to control the singularity when $t$ is small. Combining with the dissipative assumption (\ref{assumf}) of nonlinear term $f$, we establish the parabolic Schauder estimates and parabolic coercive estimates, and obtain some a priori estimates under some time weights. Then we prove the global existence of solution by a smooth approximation and Aubin-Lions argument. We also show the uniqueness of solution by direct energy estimates.

We now state our global well-posedness result. We refer to Section 3.2, Theorem \ref{Gexistence} for the details of global existence, and Section 3.3, Theorem \ref{Uniq} for uniqueness result.

\begin{thm}\label{WP}
Let $u_0 \in \mathscr{C}^{-1}$, and $\alpha \in (2/3,1)$. We denote $\rho =\tau^{1+1/(k-2)+(3\alpha-2)/2}$ for a time weight. Let $\vartheta = (-\Delta + \mu)^{-1}\xi$. Then there exists a solution $(\phi, \psi)$ to system (\ref{decGPAM1}) with
\begin{equation*}
    (\phi,\psi) \in [C_{\tau^{1/(k-2)+\alpha/2}}\mathscr{C}^{\alpha} \cap C^{\alpha/2}_{\tau^{1/(k-2)+\alpha/2}}L^{\infty}] \times [C_{\rho}\mathscr{C}^{3\alpha} \cap C^1_{\rho}L^{\infty} \cap C_{\tau^{1/(k-2)}}L^{\infty}],
\end{equation*}
such that $u=\phi+\psi$ is a unique global paracontrolled solution to the nonlinear parabolic Anderson model equation (\ref{GPAM}).
\end{thm}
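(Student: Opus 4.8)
The plan is to follow the localization-and-splitting strategy announced in the introduction, treating the two components of the decomposition (\ref{decGPAM1}) with complementary tools: paracontrolled calculus for the irregular linear part $\phi$, and classical energy and Schauder estimates for the regular dissipative part $\psi$. First I would carry out the renormalization and fix the paracontrolled structure. Since $\xi \in \mathscr{C}^{-1-\kappa}$ for every $\kappa>0$, the field $\vartheta = (-\Delta+\mu)^{-1}\xi \in \mathscr{C}^{1-\kappa}$, and the product $\phi\diamond\xi$ is classically ill-defined. Decomposing $\phi\diamond\xi = \phi\prec\xi + \phi\succ\xi + \phi\circ\xi$ via Littlewood--Paley theory, I would make the resonant term meaningful through the paracontrolled ansatz $\phi = \phi\prec\vartheta + \phi^\sharp$, where $\phi^\sharp$ gains regularity; the divergent constant in $\vartheta\circ\xi$ is removed by the renormalization built into $\diamond$, yielding the enhanced noise as a random element whose construction is standard. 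A contraction argument then gives local well-posedness of the coupled system, as in [\refcite{GIP2015}, \refcite{BDH2015}].

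The heart of the proof is the set of a priori estimates, which I would derive under the time weights dictated by the statement. For $\psi$, I would test the second equation of (\ref{decGPAM1}) against $|\psi|^{p-2}\psi$ and exploit the coercivity $f(s)s \leq C_0 - C_2|s|^k$ from (\ref{assumf}): the term $-C_2\|\psi\|_{L^k}^k$ absorbs the polynomial growth coming from the coupling with the forcing $\Psi$, while the one-sided bound $f'(s)\leq l$ controls the linearization. Letting $p\to\infty$ upgrades the $L^p$ bounds to a uniform-in-time $L^\infty$ bound weighted by $\tau^{1/(k-2)}$, the exponent $1/(k-2)$ being precisely what the scaling of the $|s|^k$ nonlinearity forces near $t=0$; parabolic Schauder estimates then bootstrap this to the claimed $C_\rho\mathscr{C}^{3\alpha}\cap C^1_\rho L^\infty$ regularity. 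In parallel, for $\phi$ the Schauder estimate for $(-\Delta+\mu)$ combined with the paracontrolled bounds on the irregular forcing $\Phi$ gives $\phi \in C_{\tau^{1/(k-2)+\alpha/2}}\mathscr{C}^\alpha$ together with the stated time-H\"older norm, the weight being chosen to compensate both for the roughness of $u_0 \in \mathscr{C}^{-1}$ and for the nonlinear scaling inherited from $\psi$ through the coupling.

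With these estimates in hand, global existence follows by approximation: I would mollify the noise to $\xi_\varepsilon$, solve the smooth problem (\ref{GPAM}) classically, and verify that all a priori bounds are uniform in $\varepsilon$. An Aubin--Lions compactness argument extracts a convergent subsequence whose limit solves the renormalized system on every time interval, producing a global solution. Uniqueness I would obtain by a direct energy estimate on the difference $u^{(1)}-u^{(2)}$ of two solutions: the paracontrolled part contributes a remainder controllable by the regularity already established, and the nonlinear contribution is handled through the one-sided Lipschitz bound $f'\leq l$ followed by a Gr\"onwall argument.

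The main obstacle, I expect, is closing the a priori estimates uniformly in time \emph{and} in $\varepsilon$ at once. The coupling terms between $\phi$ and $\psi$ mix the low regularity of the singular component with the high-order dissipation, so after the paraproduct splitting one must show that the polynomial dissipation $-C_2|s|^k$ genuinely dominates every interaction term, and the several time weights $\tau^{1/(k-2)}$, $\tau^{1/(k-2)+\alpha/2}$, and $\rho$ must be propagated consistently through both the coercive step and the Schauder step. Keeping the renormalized stochastic data bounds independent of $\varepsilon$ throughout this interplay is the delicate point on which the whole global argument rests.
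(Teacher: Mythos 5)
Your outline follows the paper's strategy in all essentials --- the splitting (\ref{decGPAM1}), time-weighted a priori estimates, mollification of the noise plus Aubin--Lions compactness for global existence, and an energy estimate with the one-sided bound $f'\leq l$ and Gr\"onwall for uniqueness --- so I will only flag where you diverge or leave the decisive mechanism unspecified. First, the coercive $L^\infty$ bound for $\psi$: you propose testing against $|\psi|^{p-2}\psi$ and sending $p\to\infty$, whereas the paper (Lemma~\ref{coercive}) evaluates the equation at the maximum point of the weighted function $\psi\,\tau^{1/(k-2)}$ and reads off $M^{k-1}\lesssim \|\Psi\|_{C_{\tau^{1+1/(k-2)}}L^\infty}^{1/(k-1)}+\dots$ directly from $f(s)s\leq C_0-C_2|s|^k$. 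Both are standard; the maximum-principle route avoids tracking $p$-dependent constants, while your $L^p$ route is closer to what the paper itself uses inside Lemma~\ref{Exapp} for the smooth approximation. Second, and more importantly, you name ``propagating the weights consistently'' as the delicate point but do not say how the circular dependence between the $\phi$-bound (which involves $\|\psi+\phi\|_{C_{\tau^{1/(k-2)}}L^\infty}$ through $\Phi$) and the $\psi$-bound (which involves $\phi$ through $\Psi$) is actually broken. The paper's device is the \emph{adaptive} choice of the localization cutoffs: the parameters $L,K$ in $\mathscr{U}_{\leq},\mathscr{U}_{>}$ are chosen so that $2^{(1-\kappa)L}=2^{(2-3\kappa)K}=1+\|\psi+\phi\|_{C_{\tau^{1/(k-2)}}L^\infty}$, which makes $\|\Phi\|_{C_{\tau^{1/(k-2)}}\mathscr{C}^{-2+\kappa}}\lesssim 1$ uniformly and renders the Step-1 bound for $\phi$ independent of $\psi$; the residual low-frequency contributions then enter $\Psi$ only with powers of $\|\psi\|_{C_{\tau^{1/(k-2)}}L^\infty}$ strictly below $k-1$, so weighted Young closes the loop against Lemma~\ref{coercive}. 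Without this (or an equivalent smallness mechanism) your scheme does not visibly close. Two smaller points: the paper's paracontrolled ansatz is $\phi^{\sharp}=\phi-(\psi+\phi)\pprec\vartheta$ with the time-mollified paraproduct $\pprec$ --- the derivative is the full solution $u=\psi+\phi$, not $\phi$ alone, and the mollification in time is what makes the commutator with $\partial_t+\mathscr{L}$ in Lemma~\ref{pprec} available --- and the preliminary contraction-mapping step you propose is not needed, since the paper obtains existence directly from the uniform bounds on the smooth approximations.
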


Throughout the chapter, we use the notation $a\lesssim b$ if there exists a constant $C>0 $, independent of the variables under consideration, such that $a \leq C\cdot b$, and we write $a\simeq b$ if $a\lesssim b$ and $b \lesssim a$.  We also use the notation $C_{x}$ to emphasize that the constant $C$ depends on the quantities $x$. The Fourier transform on the torus $\mathbb{T}^d$ is defined with $\hat{u}(k):=\mathscr{F}_{\mathbb{T}^d}u(k)= \sum_{k\in\mathbb{Z}^{d}}e^{2\pi i k \cdot x}u(x)$, so that the inverse Fourier transform on the torus $\mathbb{T}^d$ is given by $\mathscr{F}^{-1}_{\mathbb{T}^d}e^{-2\pi i k \cdot x}\hat{u}(k) = \sum_{k \in \mathbb{Z}^d}\hat{u}(k)$. The space of Schwartz functions on $\mathbb{T}^d$ is denoted by $\mathcal{S}(\mathbb{T}^d)$ or $\mathcal{S}$. The space of tempered distributions on $\mathbb{T}^d$ is denoted by $\mathcal{S}'(\mathbb{T}^d)$ or $\mathcal{S}'$. We denote $\mathscr{L}:= -\Delta +\mu$, and $\rho =\tau^{1+1/(k-2)+(3\alpha-2)/2}$.

This paper is organized as follows:  In Section 2, we revisit some basic notation and estimates of the singular SPDEs. In Section 3, we obtain some a priori estimates. Then we prove the global well-posedness by a smooth approximation. Using energy estimate, we further prove the uniqueness of solution. This paper ends with some summary and discussion in Section 4.

\section{Preliminaries}
\subsection{Besov space and Bony's paraproduct}
In this subsection, we introduce some basic notations and useful estimates about Littlewood-Paley decomposition, Besov space and Bony's paraproduct. For more details, we refer to [\refcite{BCD2011},\refcite{GIP2015}, \refcite{GH2019}].

Littlewood-Paley decomposition can describe the regularity of (general) functions via the decomposition of a (general) function into a series of smooth functions with different frequencies. In order to do this, we introduce the following dyadic partition.

Let $\varphi: \mathbb{R}^d \rightarrow [0,1]$  be a smooth radial cut-off function so that
\begin{equation*}
  \varphi(x)= \left\{
   \begin{aligned}
   & 1, \quad \quad \quad |x| \leq 1 \\
   & \text{smooth} ,  1<|x|<2 \\
   & 0, \quad \quad \quad |x|\geq 2.
   \end{aligned}
   \right.
\end{equation*}
Denote $\varrho(x) = \varphi(x)-\varphi(2^{-1}x)$ and $\chi(x) = 1- \sum_{j\geq 0}\varrho(2^{-j}x) $. Then $\chi, \varrho \in C^{\infty}_c (\mathbb{R}^d)$ are nonnegative radial functions, so that
\begin{enumerate}
\item $supp(\chi) \subset B_1(0)$ and $supp(\varrho)\subset \{x \in \mathbb{R}^d: \frac{1}{2}\leq |x| \leq 2 \}$;
\item $\chi(x)+\sum_{j\geq 0}\varrho(2^{-j}x)=1, \quad x\in \mathbb{R}^n$;
\item $supp(\chi)\cap supp(\varrho(2^{-j}x)) = \emptyset $ for $j\geq 1$ and $supp(\varrho(2^{-i}x)) \cap supp(\varrho(2^{-j}x)) = \emptyset $ for $|i-j|\geq 2$.
\end{enumerate}

\begin{defn}
For $u \in \mathcal{S}'(\mathbb{T}^d)$ and $j\geq -1$, the Littlewood-Paley blocks of $u$ are defined as
\begin{equation*}
    \Delta_j u = \mathscr{F}^{-1}_{\mathbb{T}^d}(\varrho_j \mathscr{F}_{\mathbb{T}^d}u),
\end{equation*}
where $\varrho_{-1}=\chi$ and $\varrho_j=\varrho(2^{-j}\cdot)$ for $j\geq 0$.
\end{defn}

\begin{defn}
For $\alpha\in\mathbb{R}$, $p,q \in [1,\infty]$, we define
\begin{equation*}
    B^{\alpha}_{p,q}(\mathbb{T}^d) = \left\{ u\in \mathcal{S}'(\mathbb{T}^d): \|u\|_{B^{\alpha}_{p,q}(\mathbb{T}^d)}=  	\left( \sum_{j \geq -1}(2^{j\alpha}\|\Delta_j u\|_{L^p(\mathbb{T}^d)})^q \right)^{1/q} < \infty \right\}.
\end{equation*}
\end{defn}

For $\alpha\in\mathbb{R}$, the H\"{o}lder-Besov space on $\mathbb{T}^d$ is denoted by $\mathscr{C}^{\alpha}=B^{\alpha}_{\infty,\infty}(\mathbb{T}^d)$. We remark that if $\alpha \in (0,\infty)\backslash\mathbb{N}$, then the H\"{o}lder-Besov space $\mathscr{C}^{\alpha}$ is equal to the H\"{o}lder space $C^{\alpha}(\mathbb{T}^d)$. The Sobolev space $H^{\alpha}$ is the same as the Besov space $B^{\alpha}_{2,2}(\mathbb{T}^d)$.

For a time weight $\eta$, we write
$C^{\beta}_{\eta}\mathscr{C}^{\alpha}$ for the space of continuous maps $\mathbb{R}^{+}\rightarrow \mathscr{C}^{\alpha}$ with norm $\|f\|_{C_{\eta}\mathscr{C}^{\alpha}} = \sup_{t\geq 0}\|\eta(t)f(t)\|_{\mathscr{C}^{\alpha}}$. For $\beta\in (0,1)$, we also denote
\begin{equation*}
    C^{\beta}_{\eta}\mathscr{C}^{\alpha} = \{ f\in C_{\eta}\mathscr{C}^{\alpha} : \|f\|_{C^{\beta}_{\eta}\mathscr{C}^{\alpha}}= \|f\|_{C_{\eta}\mathscr{C}^{\alpha}}+\sup_{t>s\geq0}\frac{\|\eta(t)f(t)-\eta(s) f(s)\|_{\mathscr{C}^{\alpha}}}{|t-s|^{\beta}}< \infty \}.
\end{equation*}

The following Bernstein inequality is useful in our estimates.
\begin{lem}
Let $\mathscr{B}$ ba a ball, $n \in \mathbb{N}_0$, and $1\leq p \leq q \leq \infty$. Then for every $\lambda >0$ and $u \in L^p$ with $supp(\mathscr{F}u) \subset \lambda \mathscr{B}$, we have
\begin{equation*}
    \max_{\mu \in \mathbb{N}^d: |\mu|=n}\|\partial_{\mu} u\|_{L^q} \lesssim C_{n,p,q,\mathscr{B}}\lambda^{n+d(\frac{1}{p}-\frac{1}{q})}\|u\|_{L^p}.
\end{equation*}
\end{lem}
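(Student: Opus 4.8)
The plan is to prove the estimate for each fixed multi-index $\mu$ with $|\mu|=n$ separately; since there are only finitely many such $\mu$, taking the maximum at the end costs only a finite constant. The core of the argument is the classical \emph{reproduction kernel $+$ scaling $+$ Young} scheme, adapted to the torus.

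First I would fix a smooth compactly supported cut-off $\phi\in C_c^\infty(\mathbb{R}^d)$ with $\phi\equiv 1$ on a neighbourhood of the (dilated) ball $\mathscr{B}$, and set $\phi_\lambda(\cdot)=\phi(\cdot/\lambda)$. Since $supp(\mathscr{F}u)\subset\lambda\mathscr{B}$ (on $\mathbb{T}^d$ this is a finite set of frequencies $k\in\mathbb{Z}^d$), we have $\phi_\lambda\,\mathscr{F}u=\mathscr{F}u$, hence $u=g_\lambda * u$, where $g_\lambda=\mathscr{F}^{-1}\phi_\lambda$ and $*$ denotes periodic convolution. This reproduces $u$ from a smooth kernel, and it is the identity on which everything hinges.

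Next I differentiate: $\partial_\mu u=(\partial_\mu g_\lambda)* u$. A scaling computation with $g=\mathscr{F}^{-1}\phi$ gives the Schwartz kernel $\partial_\mu g_\lambda(x)=\lambda^{n+d}(\partial_\mu g)(\lambda x)$. I then apply Young's convolution inequality on $\mathbb{T}^d$ with exponents satisfying $1+\tfrac1q=\tfrac1r+\tfrac1p$, i.e. $\tfrac1r=1-(\tfrac1p-\tfrac1q)$; because $1\le p\le q\le\infty$ forces $\tfrac1p-\tfrac1q\in[0,1]$, the exponent $r$ lies in $[1,\infty]$ and Young applies (including the endpoint cases $p=q$, giving $r=1$, and $p=1,\,q=\infty$, giving $r=\infty$). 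This yields
\[
\|\partial_\mu u\|_{L^q}\le \|\partial_\mu g_\lambda\|_{L^r}\,\|u\|_{L^p}.
\]
From the scaling of the kernel one computes $\|\partial_\mu g_\lambda\|_{L^r}=\lambda^{\,n+d(\frac1p-\frac1q)}\|\partial_\mu g\|_{L^r}$, and $\|\partial_\mu g\|_{L^r}<\infty$ since $g$ is Schwartz. Combining the two displays gives the claim with $C_{n,p,q,\mathscr{B}}=\|\partial_\mu g\|_{L^r}$, which indeed depends only on $n,p,q$ and on $\mathscr{B}$ through the choice of $\phi$.

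The main obstacle, and the step I would write most carefully, is the torus aspect of the kernel estimate. On $\mathbb{T}^d$ the kernel is the periodic object $g_\lambda(x)=\sum_{k\in\mathbb{Z}^d}\phi(k/\lambda)e^{2\pi i k\cdot x}$, and I must control its $L^r(\mathbb{T}^d)$ norm by the correct power of $\lambda$ rather than by the $L^r(\mathbb{R}^d)$ norm of the full-space kernel directly. The clean route is Poisson summation: $g_\lambda$ is the periodization of the full-space kernel $\mathscr{F}^{-1}_{\mathbb{R}^d}\phi_\lambda$, and the periodization of a rapidly decaying (Schwartz-type) kernel has its $L^r(\mathbb{T}^d)$ norm bounded, up to a constant depending on $\mathscr{B}$, by the scaling quantity computed above. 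This is the only place where the torus genuinely differs from $\mathbb{R}^d$ and where the $\mathscr{B}$-dependence of the constant enters; once it is settled, the remaining computations are the routine scaling and Young steps indicated above.
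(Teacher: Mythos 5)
Your argument is correct and is precisely the classical proof of Bernstein's inequality --- reproducing $u$ as $g_\lambda * u$ through a dilated cut-off equal to $1$ on $\lambda\mathscr{B}$, differentiating the kernel, and applying Young's convolution inequality with the scaling identity $\|\partial_\mu g_\lambda\|_{L^r} = \lambda^{n+d(\frac{1}{p}-\frac{1}{q})}\|\partial_\mu g\|_{L^r}$ for $1+\frac{1}{q}=\frac{1}{r}+\frac{1}{p}$ --- which is exactly the argument the paper implicitly relies on, since it states the lemma without proof and refers to the standard literature (Bahouri--Chemin--Danchin), where this same convolution--scaling--Young scheme appears, including your Poisson-summation treatment of the periodized kernel. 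The one caveat worth recording is that your periodization bound (and, in the smoothing case $n=0$, $p<q$, the stated inequality itself, tested on constants with $0\in\mathscr{B}$) genuinely requires $\lambda$ bounded away from $0$ on $\mathbb{T}^d$; this is harmless in context, since the lemma is only ever invoked with Littlewood--Paley scales $\lambda=2^{j}$, $j\geq -1$.
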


We need the following Bernstein inequality in $L^2$ estimates.
\begin{lem}\label{BerI}
Let $\mathscr{B}$ ba a unit ball, $n \in \mathbb{N}_0$, and $1\leq p \leq q \leq \infty$. Then for every $\lambda >0$ and $u \in L^p$ with $supp(\mathscr{F}u) \subset \lambda \mathscr{B}$, we have
\begin{equation*}
    \max_{\mu \in \mathbb{N}^d: |\mu|=n}\|\partial_{\mu} u\|_{L^q} \lesssim C_{n,p,q,\mathscr{B}}\lambda^{n+d(\frac{1}{p}-\frac{1}{q})}\|u\|_{L^p}.
\end{equation*}
\end{lem}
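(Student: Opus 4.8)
The plan is to prove this by the standard convolution-and-scaling argument of Littlewood--Paley theory; indeed, the statement is just the preceding Bernstein inequality specialized to the unit ball, so it suffices to run that same computation directly. First I would produce a convolution representation of $u$. Since $\mathscr{B}$ is the unit ball, choose a smooth compactly supported function $\Theta$ with $\Theta\equiv 1$ on $\mathscr{B}$ (for instance $\Theta=\varphi$, the cut-off from the dyadic partition, when $\mathscr{B}=B_1(0)$). The hypothesis $supp(\mathscr{F}u)\subset\lambda\mathscr{B}$ forces $\mathscr{F}u(k)=\Theta(\lambda^{-1}k)\,\mathscr{F}u(k)$ at every frequency $k$, because $\Theta(\lambda^{-1}k)=1$ whenever $k\in\lambda\mathscr{B}$ and $\mathscr{F}u(k)=0$ otherwise. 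Hence $u=h_\lambda * u$ with $h_\lambda:=\mathscr{F}^{-1}[\Theta(\lambda^{-1}\cdot)]$, and a direct computation of the inverse transform gives $h_\lambda=\lambda^d h(\lambda\,\cdot)$, where $h:=\mathscr{F}^{-1}\Theta$ is a Schwartz function.

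Next I would differentiate, apply Young's inequality, and rescale. For a multi-index $\mu$ with $|\mu|=n$ one has $\partial_\mu u=(\partial_\mu h_\lambda)*u$ and $\partial_\mu h_\lambda=\lambda^{d+n}(\partial_\mu h)(\lambda\,\cdot)$. Fix $r\in[1,\infty]$ by the Young relation $1+\frac1q=\frac1r+\frac1p$, which is solvable precisely because $p\le q$; then Young's convolution inequality gives $\|\partial_\mu u\|_{L^q}\le\|\partial_\mu h_\lambda\|_{L^r}\|u\|_{L^p}$. The change of variables $y=\lambda x$ yields $\|\partial_\mu h_\lambda\|_{L^r}=\lambda^{\,n+d(1-1/r)}\|\partial_\mu h\|_{L^r}$, and since the Young relation is equivalent to $1-\frac1r=\frac1p-\frac1q$, this is exactly $\lambda^{\,n+d(1/p-1/q)}\|\partial_\mu h\|_{L^r}$. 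Because $h$ is Schwartz, $\|\partial_\mu h\|_{L^r}<\infty$ for every $r$, so setting $C_{n,p,q,\mathscr{B}}:=\max_{|\mu|=n}\|\partial_\mu h\|_{L^r}$ and taking the maximum over $|\mu|=n$ closes the estimate.

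The step I expect to need the most care is the passage from $\mathbb{R}^d$ to the torus $\mathbb{T}^d$. On $\mathbb{T}^d$ the kernel $h_\lambda$ must be read as the \emph{periodization} of $\lambda^d h(\lambda\,\cdot)$ (via Poisson summation), so that $\mathscr{F}h_\lambda(k)=\Theta(\lambda^{-1}k)$ for $k\in\mathbb{Z}^d$, and all the $L^r$ norms above are then the norms on $\mathbb{T}^d$. The point to verify is that $\|\partial_\mu h_\lambda\|_{L^r(\mathbb{T}^d)}$ still obeys the scaling computed above up to a constant uniform in $\lambda$; this holds for $\lambda\gtrsim 1$ thanks to the rapid decay of the Schwartz kernel $h$, which controls the periodization. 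This is exactly the regime relevant here, since the inequality is applied to the Littlewood--Paley blocks with $\lambda=2^j$, $j\ge-1$, so $\lambda$ is bounded below away from $0$ and the constant $C_{n,p,q,\mathscr{B}}$ may be taken uniform over that range.
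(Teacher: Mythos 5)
The paper states this lemma without proof (it is recalled as a standard fact, essentially a restatement of the preceding Bernstein inequality from [\refcite{BCD2011}]), so there is no in-paper argument to compare against. Your proof is the standard one — convolution with a dilated Schwartz kernel, Young's inequality with $1+\tfrac1q=\tfrac1r+\tfrac1p$, and the scaling of $\|\partial_\mu h_\lambda\|_{L^r}$ — and it is correct, including the point you rightly flag about periodizing the kernel on $\mathbb{T}^d$ and the uniformity of the constant for $\lambda=2^j$, $j\geq -1$.
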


The Besov embedding theorem is useful in regularity estimates.
\begin{lem}\label{Besovem}
Let $1 \leq p_1 \leq p_2 \leq \infty$, $1 \leq q_1 \leq q_2 \leq \infty $, and $\alpha \in\mathbb{R}$. Then we have
\begin{equation*}
    B^{\alpha}_{p_1, q_1}(\mathbb{T}^d) \hookrightarrow B^{\alpha -d(1/p_1 -1/p_2)}_{p_2, p_2}(\mathbb{T}^d).
\end{equation*}
\end{lem}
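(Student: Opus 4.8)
The plan is to prove the embedding blockwise, trading the gain in integrability (from $p_1$ to $p_2$) for a loss of $d(1/p_1 - 1/p_2)$ derivatives, and then to compare the two sequence norms entering the Besov norms. Writing $\beta := \alpha - d(1/p_1 - 1/p_2)$, I would first establish the pointwise-in-$j$ estimate $2^{j\beta}\|\Delta_j u\|_{L^{p_2}} \lesssim 2^{j\alpha}\|\Delta_j u\|_{L^{p_1}}$, uniformly in $j \geq -1$, and then pass from the $\ell^{q_1}$ summation defining $\|u\|_{B^{\alpha}_{p_1,q_1}}$ to the summation defining the target norm. The two ingredients are therefore the Bernstein inequality of Lemma \ref{BerI} and the elementary monotonicity $\|\cdot\|_{\ell^{q_2}} \leq \|\cdot\|_{\ell^{q_1}}$ valid for $q_1 \leq q_2$.

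For the integrability step, I would use that, by the support properties of the dyadic partition, each block $\Delta_j u$ has Fourier transform supported in $\lambda\mathscr{B}$ with $\lambda \simeq 2^j$ (for $j \geq 0$ an annulus contained in such a ball, and for $j=-1$ the fixed unit ball). Applying Lemma \ref{BerI} with $n=0$, $p=p_1 \leq q = p_2$, and $\lambda \simeq 2^j$ to the function $\Delta_j u$ yields
\begin{equation*}
\|\Delta_j u\|_{L^{p_2}} \lesssim 2^{jd(1/p_1 - 1/p_2)}\|\Delta_j u\|_{L^{p_1}},
\end{equation*}
with a constant independent of $j$. Multiplying both sides by $2^{j\beta}$ and using $\beta + d(1/p_1 - 1/p_2) = \alpha$ gives the claimed pointwise bound $2^{j\beta}\|\Delta_j u\|_{L^{p_2}} \lesssim 2^{j\alpha}\|\Delta_j u\|_{L^{p_1}}$ for all $j \geq -1$.

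It then remains to sum over $j$. Setting $a_j := 2^{j\alpha}\|\Delta_j u\|_{L^{p_1}}$, the previous display shows $2^{j\beta}\|\Delta_j u\|_{L^{p_2}} \lesssim a_j$, so the target norm is controlled by $\|(a_j)\|_{\ell^{q_2}}$. Since $q_1 \leq q_2$, the sequence embedding $\ell^{q_1} \hookrightarrow \ell^{q_2}$ gives $\|(a_j)\|_{\ell^{q_2}} \leq \|(a_j)\|_{\ell^{q_1}} = \|u\|_{B^{\alpha}_{p_1,q_1}}$, which is precisely the asserted inequality $\|u\|_{B^{\beta}_{p_2,q_2}} \lesssim \|u\|_{B^{\alpha}_{p_1,q_1}}$; here the hypothesis $q_1 \leq q_2$ is exactly what powers the final summation step.

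The only genuine subtlety — worth checking carefully as opposed to the routine bookkeeping — is ensuring the Bernstein constant in the first step is uniform in $j$. This amounts to verifying that the $\lambda$-scaling in Lemma \ref{BerI} produces exactly the factor $2^{jd(1/p_1-1/p_2)}$ when applied at frequency $\lambda \simeq 2^j$, and treating the low-frequency block $j=-1$ (whose spectrum sits in a fixed ball, so it contributes only a bounded multiplicative constant) separately so that it does not spoil the uniform bound. Once this uniformity is secured, the Bernstein estimate and the $\ell^q$ monotonicity combine immediately to give the embedding.
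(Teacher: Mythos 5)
Your argument is correct and is the standard proof of this embedding: Bernstein's inequality (Lemma \ref{BerI}) applied blockwise at frequency $\lambda \simeq 2^{j}$ (with the $j=-1$ block handled by the fixed unit ball, so the constant is uniform in $j$) trades integrability for a loss of $d(1/p_1-1/p_2)$ derivatives, and the monotonicity $\|\cdot\|_{\ell^{q_2}} \leq \|\cdot\|_{\ell^{q_1}}$ for $q_1 \leq q_2$ closes the summation; the paper itself states Lemma \ref{Besovem} without proof, quoting it from the standard literature (e.g.\ [\refcite{BCD2011}]), and yours is exactly the textbook argument it implicitly relies on. One remark: as printed, the target space in the paper reads $B^{\alpha-d(1/p_1-1/p_2)}_{p_2,p_2}$, which is evidently a typo for $B^{\alpha-d(1/p_1-1/p_2)}_{p_2,q_2}$ --- otherwise the hypothesis $q_1 \leq q_2$ is never used and your final $\ell^{q}$ step would instead require $q_1 \leq p_2$, which is not assumed --- so your proof establishes the intended (standard) statement.
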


Now  we  define  localization  operators $\mathscr{U}^{N,\gamma}_{\leq} $, $\mathscr{U}^{N,\gamma}_{>}$ for the high-low frequency decomposition. For every $f \in \mathcal{S}^{\prime}(\mathbb{T}^d)$, we define the following localization operators
\begin{equation}
    \mathscr{U}^{N,\gamma}_{\leq} f= \sum_{-1 \leq j \leq N}\Delta_{j} f + \sum_{j > N} 2^{- j \gamma} \Delta_{j} f, \quad  \mathscr{U}^{N,\gamma}_{>} f=  \sum_{j > N}(1-2^{- j \gamma})\Delta_{j} f.
\end{equation}

\begin{lem}\label{Localization}
Let $N, \gamma>0$ and $f \in \mathcal{S}^{\prime}(\mathbb{T}^d)$. Then for every $\alpha, \delta>0$ and $\beta \in [0,\gamma],$ we have
\begin{equation*}
   \left\|\mathscr{U}^{N,\gamma}_{>} f\right\|_{\mathscr{C}^{-\alpha-\delta}} \lesssim 2^{-\delta N}\|f\|_{\mathscr{C}^{-\alpha} }, \quad \|\mathscr{U}^{N,\gamma}_{\leq} f\|_{\mathscr{C}^{-\alpha+\beta}} \lesssim 2^{\beta N}\|f\|_{\mathscr{C}^{-\alpha} }.
\end{equation*}
\end{lem}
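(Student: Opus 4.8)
The plan is to estimate both operators block by block in the Littlewood--Paley decomposition, exploiting that each is a Fourier multiplier with a uniformly bounded coefficient sequence together with the almost-orthogonality of the blocks. I write $\mathscr{U}^{N,\gamma}_{\leq}f=\sum_{j\geq-1}c_j\Delta_j f$ and $\mathscr{U}^{N,\gamma}_{>}f=\sum_{j\geq-1}d_j\Delta_j f$, where $c_j=1$ for $-1\leq j\leq N$ and $c_j=2^{-j\gamma}$ for $j>N$, while $d_j=0$ for $j\leq N$ and $d_j=1-2^{-j\gamma}$ for $j>N$; in particular $0\leq c_j,d_j\leq 1$ for all $j$. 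Since the $\mathscr{C}^{s}$-norm is a supremum of weighted $L^\infty$-blocks, I first record that by property (3) of the partition one has $\Delta_i\Delta_j f=0$ whenever $|i-j|\geq 2$, and that $\|\Delta_i g\|_{L^\infty}\lesssim\|g\|_{L^\infty}$ uniformly in $i$ (Young's inequality, since the kernels $\mathscr{F}^{-1}_{\mathbb{T}^d}\varrho_i$ are uniformly bounded in $L^1$). Consequently $\Delta_i\mathscr{U}^{N,\gamma}_{\cdot}f=\sum_{|i-j|\leq1}(\text{coeff})_j\,\Delta_i\Delta_j f$ and $\|\Delta_i\mathscr{U}^{N,\gamma}_{\cdot}f\|_{L^\infty}\lesssim\sum_{|i-j|\leq1}(\text{coeff})_j\,\|\Delta_j f\|_{L^\infty}$.

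For the first inequality, I would note that $d_j=0$ for $j\leq N$ forces $\Delta_i\mathscr{U}^{N,\gamma}_{>}f=0$ as soon as $i\leq N-1$, since then every $j$ with $|i-j|\leq 1$ satisfies $j\leq N$. Hence only the blocks $i\geq N$ contribute, and for these
\[
2^{i(-\alpha-\delta)}\|\Delta_i\mathscr{U}^{N,\gamma}_{>}f\|_{L^\infty}\lesssim 2^{-i\delta}\!\!\sum_{|i-j|\leq1}2^{-i\alpha}\|\Delta_j f\|_{L^\infty}\lesssim 2^{-i\delta}\|f\|_{\mathscr{C}^{-\alpha}}\leq 2^{-\delta N}\|f\|_{\mathscr{C}^{-\alpha}},
\]
where I used $0\leq d_j\leq1$, the equivalence $2^{-i\alpha}\simeq2^{-j\alpha}$ for $|i-j|\leq1$, the definition of the $\mathscr{C}^{-\alpha}$-norm, and finally $2^{-i\delta}\leq2^{-N\delta}$ for $i\geq N$. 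Taking the supremum over $i$ gives the claim.

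For the second inequality, I would split on the size of $i$. When $i\leq N$ the gain is $2^{i\beta}\leq2^{N\beta}$ and, since $c_j\leq1$, the same chain of estimates yields $2^{i(-\alpha+\beta)}\|\Delta_i\mathscr{U}^{N,\gamma}_{\leq}f\|_{L^\infty}\lesssim 2^{i\beta}\|f\|_{\mathscr{C}^{-\alpha}}\leq 2^{N\beta}\|f\|_{\mathscr{C}^{-\alpha}}$. When $i>N$, the contributing blocks have $j>N$ (apart from the single boundary pair $i=N+1,\ j=N$, which costs only a harmless factor $2^\beta\leq2^\gamma$), so the coefficient is $c_j=2^{-j\gamma}$ and, using $2^{j\beta}\simeq2^{i\beta}$,
\[
2^{i(-\alpha+\beta)}2^{-j\gamma}\|\Delta_i\Delta_j f\|_{L^\infty}\lesssim 2^{j(\beta-\gamma)}\,2^{-j\alpha}\|\Delta_j f\|_{L^\infty}\leq 2^{j(\beta-\gamma)}\|f\|_{\mathscr{C}^{-\alpha}}\leq 2^{N\beta}\|f\|_{\mathscr{C}^{-\alpha}},
\]
where the last step is exactly where the hypothesis $\beta\in[0,\gamma]$ enters: $\beta-\gamma\leq0$ makes $2^{j(\beta-\gamma)}\leq1\leq2^{N\beta}$ for the high frequencies $j>N$. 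Taking the supremum over $i$ concludes the proof. As for difficulties, there is no genuine obstacle: the statement is a direct frequency-by-frequency computation. The only points requiring care are the almost-orthogonality bookkeeping that localizes each sum to $|i-j|\leq1$, the correct treatment of the boundary frequencies $i\approx N$ where the coefficient law changes, and the observation that the restriction $\beta\leq\gamma$ is precisely what lets the high-frequency tail of $\mathscr{U}^{N,\gamma}_{\leq}$ be absorbed without cost.
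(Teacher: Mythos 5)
Your proof is correct and follows essentially the same route as the paper's: a frequency-by-frequency estimate of the multiplier coefficients, using that only blocks with $|i-j|\leq 1$ interact, that the high-pass part vanishes for $i$ below $N$, and that $\beta\leq\gamma$ absorbs the high-frequency tail of $\mathscr{U}^{N,\gamma}_{\leq}$. The paper states this computation very tersely; you have simply supplied the bookkeeping (almost-orthogonality, boundary indices near $N$) that it leaves implicit.
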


\begin{proof}
We estimate
\begin{align}
    \|\mathscr{U}^{N,\gamma}_{>} f \|_{\mathscr{C}^{-\alpha-\delta}} = & \sup_{l \geq -1} \left[ 2^{l(-\alpha - \delta)} \|\Delta_{l} (\sum_{j > N}(1-2^{- j \gamma})\Delta_{j} f) \|_{L^{\infty}}\right] \nonumber\\
    \leq & 2^{-\delta N}\sup_{l \geq -1} \left[ 2^{-\alpha l} \|\Delta_{l} f \|_{L^{\infty}}\right] \nonumber\\
    \leq & 2^{-\delta N} \|f\|_{\mathscr{C}^{-\alpha} }
\end{align}
Using same argument, we also have
\begin{align}
    \|\mathscr{U}^{N,\gamma}_{\leq} f\|_{\mathscr{C}^{-\alpha+\beta}} = & \sup_{l \geq -1} \left[ 2^{l(-\alpha + \beta)} \|\Delta_{l}(\sum_{-1 \leq j \leq N}\Delta_{j} f + \sum_{j \geq N} 2^{- j \gamma} \Delta_{j} f ) \|_{L^{\infty}}\right] \nonumber\\
     \leq & 2^{\beta N} \|f\|_{\mathscr{C}^{-\alpha} }.
\end{align}

\end{proof}

Now we introduce the Bony's paraproduct. Let $u$ and $v$ be tempered distributions in $\mathcal{S}^{\prime}(\mathbb{T}^d)$. By Littlewood-Paley blocks, the product $uv$ can be (formally) decomposed as
\begin{equation*}
    uv = \sum_{j\geq -1}\sum_{i\geq -1} \Delta_{i} u \Delta_{j} v = u\prec v +u\circ v+ u\succ v,
\end{equation*}
where
\begin{equation*}
    u\prec v = v\succ u= \sum_{j\geq -1}\sum_{i =-1}^{j-2} \Delta_{i} u \Delta_{j} v \quad \text{and} \quad u\circ v = \sum_{|i-j|\leq 1} \Delta_{i} u\Delta_{j} v.
\end{equation*}
We have following paraproduct estimates in the Bony's paraproduct (See Lemma 2.1 in [\refcite{GIP2015}] or Proposition A.1 in [\refcite{GUZ2020}]).
\begin{lem}\label{Bparaproduct}
For every $\beta \in \mathbb{R}$, we have
\begin{equation*}
    \|u\prec v\|_{\mathscr{C}^{\beta}} \lesssim \|u\|_{L^{\infty}}\|v\|_{\mathscr{C}^{\beta}},
\end{equation*}
\begin{equation*}
    \|u \prec v\|_{H^{\beta}} \lesssim \|u\|_{L^2} \|v\|_{\mathscr{C}^{\beta + \kappa }} \wedge \|u\|_{L^{\infty}}\|v\|_{H^{\beta}} \quad \text{for all }\kappa >0.
\end{equation*}
If $\beta \in \mathbb{R}$, $\alpha <0$, we have
\begin{equation*}
    \|u\prec v\|_{\mathscr{C}^{\alpha+\beta}} \lesssim \|u\|_{\mathscr{C}^{\alpha}}\|v\|_{\mathscr{C}^{\beta}},
\end{equation*}
\begin{equation*}
    \|u \prec v\|_{H^{\alpha + \beta }} \lesssim \|u\|_{H^{\alpha}}\|v\|_{\mathscr{C}^{\beta + \kappa }} \wedge \|u\|_{\mathscr{C}^{\alpha}}\|v\|_{H^{\beta}} \quad \text{for all }\kappa >0.
\end{equation*}
Moreover, if $\alpha+\beta>0$, then
\begin{equation*}
\|u\circ v\|_{\mathscr{C}^{\alpha +\beta}} \lesssim \|u\|_{\mathscr{C}^{\alpha}}\|v\|_{\mathscr{C}^{\beta}},
\end{equation*}
\begin{equation*}
    \|u\circ v\|_{H^{\alpha + \beta}} \lesssim \|u\|_{\mathscr{C}^{\alpha}}\|v\|_{H^{\beta}}.
\end{equation*}
\end{lem}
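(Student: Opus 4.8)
The plan is to prove all six inequalities by a single Littlewood--Paley scheme, using only the spectral localization of the blocks $\Delta_j$, Bernstein's inequality (Lemma \ref{BerI}), and Young's inequality for convolutions of sequences. Throughout I abbreviate $S_{j-1}u:=\sum_{i=-1}^{j-2}\Delta_i u$, so that $u\prec v=\sum_{j\geq-1}S_{j-1}u\,\Delta_j v$, and I record the two standard spectral facts on which everything rests: a summand $S_{j-1}u\,\Delta_j v$ is spectrally supported in an annulus of radius $\sim 2^{j}$ (since a ball of radius $\sim 2^{j}$ is added to an annulus of strictly larger inner radius), so $\Delta_l(S_{j-1}u\,\Delta_j v)=0$ unless $|l-j|\leq N_0$ for a fixed $N_0$; whereas a resonant summand $\Delta_i u\,\Delta_j v$ with $|i-j|\leq1$ is supported in a \emph{ball} of radius $\sim 2^{j}$, so $\Delta_l(u\circ v)$ receives contributions from all $j\geq l-N_0$.

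For the low--high (paraproduct) estimates I would first treat the $L^\infty$ case via the uniform multiplier bound $\|S_{j-1}u\|_{L^\infty}\lesssim\|u\|_{L^\infty}$, which together with the finite overlap gives $2^{l\beta}\|\Delta_l(u\prec v)\|_{L^\infty}\lesssim\|u\|_{L^\infty}\sum_{|j-l|\leq N_0}2^{(l-j)\beta}\,2^{j\beta}\|\Delta_j v\|_{L^\infty}\lesssim\|u\|_{L^\infty}\|v\|_{\mathscr{C}^\beta}$. For the variant with $\alpha<0$ I would instead estimate $\|S_{j-1}u\|_{L^\infty}\leq\sum_{i\leq j-2}2^{-i\alpha}\,(2^{i\alpha}\|\Delta_i u\|_{L^\infty})\lesssim 2^{-j\alpha}\|u\|_{\mathscr{C}^\alpha}$, where the sign $\alpha<0$ is exactly what makes the geometric sum dominated by its top term; the same finite-overlap bookkeeping then yields $\|u\prec v\|_{\mathscr{C}^{\alpha+\beta}}\lesssim\|u\|_{\mathscr{C}^\alpha}\|v\|_{\mathscr{C}^\beta}$.

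For the resonant term I would use $\|\Delta_i u\,\Delta_j v\|_{L^\infty}\lesssim 2^{-j(\alpha+\beta)}\|u\|_{\mathscr{C}^\alpha}\|v\|_{\mathscr{C}^\beta}$ for $|i-j|\leq1$, so that $2^{l(\alpha+\beta)}\|\Delta_l(u\circ v)\|_{L^\infty}\lesssim\|u\|_{\mathscr{C}^\alpha}\|v\|_{\mathscr{C}^\beta}\sum_{j\geq l-N_0}2^{(l-j)(\alpha+\beta)}$. Here the hypothesis $\alpha+\beta>0$ is precisely what renders the one-sided geometric tail summable, producing $\|u\circ v\|_{\mathscr{C}^{\alpha+\beta}}\lesssim\|u\|_{\mathscr{C}^\alpha}\|v\|_{\mathscr{C}^\beta}$.

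The Sobolev ($H^\beta=B^\beta_{2,2}$) versions follow identical lines, with the supremum over $l$ replaced by an $\ell^2_l$ norm and the summation over the finite (paraproduct) or one-sided (resonant) window handled by Young's inequality for sequences, whose $\ell^1$-kernel is summable precisely because $\alpha+\beta>0$ in the resonant case; the estimates placing $u$ in $H^\alpha$ or $\mathscr{C}^\alpha$ and $v$ in $H^\beta$ require no loss. The one genuinely new feature is the $\kappa$-loss in the two mixed estimates $\|u\prec v\|_{H^\beta}\lesssim\|u\|_{L^2}\|v\|_{\mathscr{C}^{\beta+\kappa}}$ and $\|u\prec v\|_{H^{\alpha+\beta}}\lesssim\|u\|_{H^\alpha}\|v\|_{\mathscr{C}^{\beta+\kappa}}$: placing $v$ in the Hölder scale $\mathscr{C}^\bullet=B^\bullet_{\infty,\infty}$ controls only $\sup_j 2^{j\beta}\|\Delta_j v\|_{L^\infty}$, whereas the $\ell^2_l$ summation forces the stronger $\ell^2_j$ quantity $\|v\|_{B^\beta_{\infty,2}}$, and the two are reconciled by the embedding $B^{\beta+\kappa}_{\infty,\infty}\hookrightarrow B^\beta_{\infty,2}$, valid for every $\kappa>0$ since $\sum_j 2^{-2j\kappa}<\infty$. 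I expect the only delicate point to be this careful accounting of the summability indices---deciding when the geometric series is two-sided and convergent (needing $\alpha<0$ or $\alpha+\beta>0$) versus when the $\kappa$-loss is unavoidable---rather than any single hard estimate, as each individual bound is a routine consequence of spectral support and Bernstein.
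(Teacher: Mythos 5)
The paper offers no proof of this lemma at all --- it is imported verbatim from Lemma 2.1 of Gubinelli--Imkeller--Perkowski and Proposition A.1 of Gubinelli--Ugurcan--Zachhuber --- and your argument is exactly the standard Littlewood--Paley proof given in those references (and in Bahouri--Chemin--Danchin, Ch.~2). It is correct: the finite-overlap bookkeeping for $\prec$, the one-sided geometric tail that needs $\alpha+\beta>0$ for $\circ$, the use of $\alpha<0$ to bound $\|S_{j-1}u\|_{L^\infty}\lesssim 2^{-j\alpha}\|u\|_{\mathscr{C}^{\alpha}}$, and the embedding $B^{\beta+\kappa}_{\infty,\infty}\hookrightarrow B^{\beta}_{\infty,2}$ accounting for the $\kappa$-loss in the mixed estimates are all precisely the right mechanisms.
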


The following commutator estimate is also crucial in paracontrolled distribution (See Lemma 2.4 in [\refcite{GIP2015}] and Proposition A.2 in [\refcite{GUZ2020}])

\begin{lem}\label{commutatorE}
. Assume that $\alpha\in (0,1)$ and $\beta, \gamma \in \mathbb{R}$ are such that $\alpha+\beta +\gamma>0$ and $\beta +\gamma <0$. Then for $u,v,h \in C^{\infty}(\mathbb{T}^d)$, the trilinear operator
\begin{equation*}
    C(u,v,h)=(u\prec v)\circ h-u(v\circ h)
\end{equation*}
has the following estimate
\begin{equation*}
    \|C(u,v,h)\|_{\mathscr{C}^{\alpha+\beta +\gamma}} \lesssim \|u\|_{\mathscr{C}^{\alpha}}\|v\|_{\mathscr{C}^{\beta}}\|h\|_{\mathscr{C}^{\gamma}}.
\end{equation*}
Thus $C$ can be uniquely extended to a bounded trilinear operator from $\mathscr{C}^{\alpha}\times \mathscr{C}^{\beta} \times \mathscr{C}^{\gamma}$ to $\mathscr{C}^{\alpha+\beta +\gamma}$. For $H^{\alpha}$ space, we also have
\begin{equation*}
    \|C(u,v,h)\|_{H^{\alpha + \beta +\gamma}} \lesssim \|u\|_{ H^{\alpha}}\|v\|_{H^{\beta}}\|h\|_{\mathscr{C}^{\gamma }}.
\end{equation*}
It implies that $C$ can be uniquely extended to a bounded trilinear operator from $H^{\alpha}\times H^{\beta}\times \mathscr{C}^{\gamma}$ to $H^{\alpha+\beta +\gamma}$.
\end{lem}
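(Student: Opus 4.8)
The plan is to prove the bound directly from the Littlewood--Paley decomposition, exploiting the cancellation built into $C$. Write $S_{j-1}u:=\sum_{i\le j-2}\Delta_i u$, so that $u\prec v=\sum_j S_{j-1}u\,\Delta_j v$ and each summand is spectrally supported in an annulus of size $2^j$. First I would expand both halves of the commutator into dyadic blocks. In $(u\prec v)\circ h=\sum_{|p-q|\le 1}\Delta_p(u\prec v)\,\Delta_q h$ the outer block $\Delta_p$ forces $|p-j|\le 1$, and the resonance $|p-q|\le 1$ then locks the frequency of $h$ to the common scale $2^j$; likewise $u(v\circ h)=\sum_{|j-q|\le 1}u\,\Delta_j v\,\Delta_q h$. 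Thus in both terms the frequencies of $v$ and $h$ are comparable to a single dyadic scale $2^j$, while the factor carrying $u$ sits at lower or comparable frequency. The whole point of the lemma is that, although the individual resonant product $v\circ h$ would need $\beta+\gamma>0$ to be defined by Lemma \ref{Bparaproduct}, the difference $C$ is well behaved even when $\beta+\gamma<0$, so the commutator structure is indispensable precisely in the regime of the hypothesis.

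The key step is to reorganize the difference into a genuine commutator in which the low-frequency factor is pulled outside the resonant product. Splitting $u=S_{j-1}u+(u-S_{j-1}u)$, the leading part of $C$ reduces to a sum over $j$ of commutator terms of the form $[\Delta_p,\,S_{j-1}u]\,\Delta_j v\,\Delta_q h$ (the commutator between multiplication by the slowly varying $S_{j-1}u$ and the projections implicit in $\circ h$), plus a remainder built from $(u-S_{j-1}u)\,\Delta_j v\,\Delta_q h$. For the commutator piece I would use that $S_{j-1}u$ varies slowly on scale $2^{-j}$: representing $\Delta_p$ through its kernel and applying the mean value theorem gives $\|[\Delta_p,S_{j-1}u]\,g\|_{L^\infty}\lesssim 2^{-p}\|\nabla S_{j-1}u\|_{L^\infty}\|g\|_{L^\infty}$, and the Bernstein inequality (Lemma \ref{BerI}) together with $\alpha\in(0,1)$ yields $\|\nabla S_{j-1}u\|_{L^\infty}\lesssim 2^{j(1-\alpha)}\|u\|_{\mathscr{C}^{\alpha}}$, where $\alpha<1$ is exactly what makes the geometric sum defining $\nabla S_{j-1}u$ dominated by its top scale. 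Hence each commutator block is spectrally supported in a ball of radius $\lesssim 2^j$ and is bounded in $L^\infty$ by $2^{-j(\alpha+\beta+\gamma)}\|u\|_{\mathscr{C}^{\alpha}}\|v\|_{\mathscr{C}^{\beta}}\|h\|_{\mathscr{C}^{\gamma}}$.

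Assembling the bound requires two separate summations, and this is where I expect the main obstacle to lie: the two sign conditions enter through two different terms. For the commutator terms, which are low-pass at frequency $2^j$, $\Delta_k C$ receives contributions only from $j\gtrsim k$, so
\[
2^{k(\alpha+\beta+\gamma)}\|\Delta_k C(u,v,h)\|_{L^\infty}\lesssim 2^{k(\alpha+\beta+\gamma)}\sum_{j\gtrsim k}2^{-j(\alpha+\beta+\gamma)}\,\|u\|_{\mathscr{C}^{\alpha}}\|v\|_{\mathscr{C}^{\beta}}\|h\|_{\mathscr{C}^{\gamma}},
\]
which converges precisely because $\alpha+\beta+\gamma>0$. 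The remainder $(u-S_{j-1}u)\Delta_jv\Delta_qh$ is more delicate, since its $u$-factor $\sum_{i\ge j-1}\Delta_i u$ carries a genuine high-frequency tail: using $\|u-S_{j-1}u\|_{L^\infty}\lesssim 2^{-j\alpha}\|u\|_{\mathscr{C}^{\alpha}}$ (which needs $\alpha>0$) and pairing $\Delta_i u$ ($i\gtrsim k$) with $\Delta_jv\,\Delta_qh$, the block at output frequency $2^k$ is controlled by $\sum_{i\gtrsim k}2^{-i\alpha}\sum_{j\le i+1}2^{-j(\beta+\gamma)}$; the inner sum is summable at its top scale exactly when $\beta+\gamma<0$, giving $2^{-i(\beta+\gamma)}$, after which the outer sum again converges under $\alpha+\beta+\gamma>0$. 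The careful frequency bookkeeping that matches the two expansions term by term, isolates the commutator cleanly, and verifies that both the low-pass commutator terms and the high-frequency remainder carry the summable factor $2^{-\,\cdot\,(\alpha+\beta+\gamma)}$ is the technical heart of the argument; the continuous extension then follows by density of $C^\infty(\mathbb{T}^d)$.

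For the $H^{\alpha+\beta+\gamma}$ estimate the same scheme applies, now measuring the two $L^2$-scale inputs $u\in H^\alpha$ and $v\in H^\beta$ through the $L^2$ Bernstein inequality (Lemma \ref{BerI}) and the Sobolev paraproduct and commutator bounds of Lemma \ref{Bparaproduct}, while $h\in\mathscr{C}^{\gamma}$ supplies the $L^\infty$ factor in the resonant product. The one point requiring care is the correct allocation of the $L^2$ and $L^\infty$ roles among the three factors so that each dyadic block is measured in $L^2$ with a square-summable prefactor; the $\ell^2$-summability then propagates from the $H^\alpha$ and $H^\beta$ sequences (their diagonal product sequence being $\ell^2$), and the $\ell^2$-in-$k$ sum defining the $H^{\alpha+\beta+\gamma}$-norm converges again under $\alpha+\beta+\gamma>0$, with the remainder term handled under $\beta+\gamma<0$ exactly as above.
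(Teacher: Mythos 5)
The paper does not prove this lemma; it is quoted verbatim from Lemma 2.4 of [\refcite{GIP2015}] and Proposition A.2 of [\refcite{GUZ2020}], and your sketch reproduces essentially the proof given there: the dyadic expansion locking $v$ and $h$ to a common scale $2^j$, the split $u=S_{j-1}u+(u-S_{j-1}u)$, the kernel/mean-value bound on $[\Delta_p,S_{j-1}u]$ giving the gain $2^{-j\alpha}$ via $\alpha<1$, and the two summations using $\alpha+\beta+\gamma>0$ for the low-pass part and $\beta+\gamma<0$ for the high-frequency tail. Your allocation of the $L^2$ and $L^\infty$ roles in the $H^{\alpha+\beta+\gamma}$ variant likewise matches the argument in [\refcite{GUZ2020}], so the proposal is correct and takes the same route as the paper's source.
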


For every $u,v,h \in C^{\infty}(\mathbb{T}^d)$, we define the trilinear operator
\begin{equation}
    D(u,v,h) = \langle u, h\circ v \rangle - \langle u\prec v, h\rangle.
\end{equation}
We have the following estimate from [\refcite{GUZ2020}].
\begin{lem}\label{Dm}
Let $\alpha\in (0,1)$, $\beta, \gamma \in \mathbb{R}$ such that $\alpha + \beta + \gamma > 0$ and $\beta + \gamma < 0$. Then we have
\begin{equation*}
    |D(u,v,h)| \lesssim \|u\|_{ H^{\alpha}}\|v\|_{H^{\beta}}\|h\|_{\mathscr{C}^{\gamma}}.
\end{equation*}
Thus $D$ can be uniquely extended to a bounded trilinear operator from $H^{\alpha}\times H^{\beta}\times \mathscr{C}^{\gamma}$ to $\mathbb{R}$.
\end{lem}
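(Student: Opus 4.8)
The plan is to reduce the claim to the commutator estimate of Lemma~\ref{commutatorE} by recognizing that $D$ is, up to sign, the integral of $C$. Concretely, I would first establish the algebraic identity
\begin{equation*}
 D(u,v,h) = -\int_{\mathbb{T}^d} C(u,v,h)\,dx = -\langle C(u,v,h), 1\rangle,
\end{equation*}
and then bound the right-hand side using the $H$-space version of the commutator estimate together with the fact that the constant function $1$ has finite negative Sobolev norm.

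To prove the identity, I would use two elementary facts about Littlewood--Paley blocks. The first is that the full $L^2$ pairing of two distributions coincides with the integral of their resonant product: for any $a$, by property (3) of the dyadic partition one has $\int_{\mathbb{T}^d}\Delta_i a\,\Delta_j h\,dx = 0$ whenever $|i-j|\ge 2$ (the Fourier supports of $\Delta_i a$ and $\Delta_j h$ are disjoint, so their $L^2$ inner product vanishes), and hence $\langle a,h\rangle = \sum_{|i-j|\le 1}\langle\Delta_i a,\Delta_j h\rangle = \int_{\mathbb{T}^d} a\circ h\,dx$. Applying this with $a = u\prec v$ gives $\int_{\mathbb{T}^d}(u\prec v)\circ h\,dx = \langle u\prec v, h\rangle$. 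The second fact is the symmetry of the resonant product, $v\circ h = h\circ v$, which yields $\int_{\mathbb{T}^d} u\,(v\circ h)\,dx = \langle u, h\circ v\rangle$. Subtracting, the definition of $C$ produces exactly $\int_{\mathbb{T}^d} C(u,v,h)\,dx = \langle u\prec v, h\rangle - \langle u, h\circ v\rangle = -D(u,v,h)$.

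With the identity in hand, I would conclude by duality. Since $1$ is supported at frequency zero, $\|1\|_{H^{-(\alpha+\beta+\gamma)}}$ is finite (indeed $O(1)$), so
\begin{equation*}
 |D(u,v,h)| = |\langle C(u,v,h),1\rangle| \le \|C(u,v,h)\|_{H^{\alpha+\beta+\gamma}}\,\|1\|_{H^{-(\alpha+\beta+\gamma)}} \lesssim \|C(u,v,h)\|_{H^{\alpha+\beta+\gamma}}.
\end{equation*}
The hypotheses of Lemma~\ref{Dm} ($\alpha\in(0,1)$, $\alpha+\beta+\gamma>0$, $\beta+\gamma<0$) are precisely those required by the $H^{\alpha+\beta+\gamma}$ bound in Lemma~\ref{commutatorE}, so the latter gives $\|C(u,v,h)\|_{H^{\alpha+\beta+\gamma}}\lesssim \|u\|_{H^\alpha}\|v\|_{H^\beta}\|h\|_{\mathscr{C}^\gamma}$, which is the desired estimate. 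The extension from smooth $u,v,h$ to general $u\in H^\alpha$, $v\in H^\beta$, $h\in\mathscr{C}^\gamma$ then follows by density, since $D$ is trilinear and continuous in these norms.

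The step I expect to require the most care is the orthogonality identity $\langle a,h\rangle = \int_{\mathbb{T}^d} a\circ h\,dx$: one must check that the two paraproduct pieces $a\prec h$ and $a\succ h$ integrate to zero, which is exactly the frequency-disjointness statement above, and one must ensure that all Littlewood--Paley sums converge absolutely so that the rearrangements are legitimate—this is guaranteed here because $u,v,h$ are smooth. Everything else is a direct appeal to the previously established commutator estimate.
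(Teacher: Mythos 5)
Your proposal is correct. Note that the paper itself gives no proof of this lemma; it is simply imported from [\refcite{GUZ2020}], so there is no in-paper argument to compare against. Your reduction is the natural one and it holds up: the orthogonality $\int_{\mathbb{T}^d}\Delta_i a\,\Delta_j h\,dx=0$ for $|i-j|\geq 2$ follows from property (3) of the dyadic partition together with the radial symmetry of $\varrho_j$ (so that $-k\in\mathrm{supp}\,\varrho_j$ iff $k\in\mathrm{supp}\,\varrho_j$), which gives $\langle a,h\rangle=\int a\circ h\,dx$; combined with the symmetry $v\circ h=h\circ v$ this yields exactly $D(u,v,h)=-\langle C(u,v,h),1\rangle$, and the hypotheses of Lemma~\ref{Dm} match those of the $H^{\alpha+\beta+\gamma}$ bound in Lemma~\ref{commutatorE}, so pairing against the constant function (which lies in every $H^{s}$) closes the estimate. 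The density step at the end is routine since the bound is established for smooth inputs and $D$ is trilinear. This is essentially the standard argument for such "dual commutator" estimates, and I see no gap.
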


The following estimate from [\refcite{AC2015}] is useful in this chapter.
\begin{lem}\label{CE}
Let $f\in H^{\alpha}$, $g \in \mathscr{C}^{\beta}$ with $\alpha \in (0,1)$, $\beta \in \mathbb{R}$. Then
\begin{equation*}
    \|\mathscr{L}(u\prec v)- u\prec (\mathscr{L}v)\|_{H^{\alpha+\beta+2}} \lesssim \|u\|_{H^{\alpha}}\|v\|_{\mathscr{C}^{\beta}}.
\end{equation*}
\end{lem}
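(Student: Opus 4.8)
The estimate asserts a gain of two derivatives, which is precisely the order-$(-2)$ smoothing carried by the inverse $\mathscr{L}^{-1}$; since $\mu>0$ the operator $\mathscr{L}=-\Delta+\mu$ is invertible on $\mathcal{S}'(\mathbb{T}^2)$ and $\mathscr{L}^{-1}$ is a Fourier multiplier mapping $\mathscr{C}^s\to\mathscr{C}^{s+2}$ and $H^s\to H^{s+2}$ boundedly (this is also the object appearing in the paper through $\vartheta=\mathscr{L}^{-1}\xi$). The plan is to realize the claimed two-derivative gain through this smoothing. Setting $w:=\mathscr{L}^{-1}v$, so that $w\in\mathscr{C}^{\beta+2}$ with $\|w\|_{\mathscr{C}^{\beta+2}}\lesssim\|v\|_{\mathscr{C}^\beta}$, I would use the algebraic identity
\begin{equation*}
\mathscr{L}^{-1}(u\prec v)-u\prec(\mathscr{L}^{-1}v)=-\,\mathscr{L}^{-1}\big[\mathscr{L}(u\prec w)-u\prec(\mathscr{L}w)\big],
\end{equation*}
which follows at once from $v=\mathscr{L}w$ and $\mathscr{L}^{-1}\mathscr{L}=\mathrm{id}$. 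This reduces the bound in $H^{\alpha+\beta+2}$ to controlling the commutator $R(u,w):=\mathscr{L}(u\prec w)-u\prec(\mathscr{L}w)$ for the differential operator one order lower, namely in $H^{\alpha+\beta}$, after which one application of $\mathscr{L}^{-1}$ supplies the final two derivatives.

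The engine is therefore the commutator $R(u,w)$ for $\mathscr{L}=-\Delta+\mu$. Writing $S_{j-1}u=\sum_{i\leq j-2}\Delta_i u$ for the low-frequency factor in $u\prec w=\sum_j S_{j-1}u\,\Delta_j w$, and expanding $-\Delta(S_{j-1}u\,\Delta_j w)$ by the Leibniz rule, the top-order term $S_{j-1}u\,(-\Delta\Delta_j w)$ is exactly cancelled by the corresponding term of $u\prec\mathscr{L}w=\sum_j S_{j-1}u\,\mathscr{L}\Delta_j w$, and the two zeroth-order $\mu$-terms cancel as well. What remains is the genuinely lower-order expression
\begin{equation*}
R(u,w)=-\sum_{j\geq -1}\Big[(\Delta S_{j-1}u)\,\Delta_j w+2\,\nabla S_{j-1}u\cdot\nabla\Delta_j w\Big],
\end{equation*}
in which every summand is Fourier-localized in a fixed dyadic annulus around $2^j$, so that $\|R(u,w)\|_{H^{\alpha+\beta}}^2\simeq\sum_j 2^{2j(\alpha+\beta)}\|R_j\|_{L^2}^2$, with $R_j$ the $j$-th summand.

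For the block estimates I would keep the $H^\alpha$-factor $u$ in $L^2$ and the $\mathscr{C}^{\beta+2}$-factor $w$ in $L^\infty$, and apply Bernstein's inequality (Lemma \ref{BerI}): from $\|\Delta\Delta_i u\|_{L^2}\lesssim 2^{2i}\|\Delta_i u\|_{L^2}$, $\|\nabla\Delta_i u\|_{L^2}\lesssim 2^{i}\|\Delta_i u\|_{L^2}$ and $\|\nabla\Delta_j w\|_{L^\infty}\lesssim 2^{j}\|\Delta_j w\|_{L^\infty}\lesssim 2^{-j(\beta+1)}\|w\|_{\mathscr{C}^{\beta+2}}$ one obtains, with $a_i:=2^{i\alpha}\|\Delta_i u\|_{L^2}\in\ell^2$ and $b_j:=2^{j(\beta+2)}\|\Delta_j w\|_{L^\infty}\in\ell^\infty$, a bound of the schematic form
\begin{equation*}
2^{j(\alpha+\beta)}\|R_j\|_{L^2}\lesssim b_j\sum_{i\leq j-2}2^{-(j-i)c}\,a_i,\qquad c\in\{2-\alpha,\ 1-\alpha\}.
\end{equation*}
Since $\alpha\in(0,1)$ both exponents satisfy $c>0$, so the kernel $2^{-(j-i)c}\mathbf{1}_{i\leq j-2}$ is summable; Young's inequality for the discrete convolution then transfers the $\ell^2$-summability of $(a_i)$ to the left-hand side, giving $(2^{j(\alpha+\beta)}\|R_j\|_{L^2})_j\in\ell^2$ and hence $R(u,w)\in H^{\alpha+\beta}$ with $\|R(u,w)\|_{H^{\alpha+\beta}}\lesssim\|u\|_{H^\alpha}\|w\|_{\mathscr{C}^{\beta+2}}$.

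Combining the three steps yields
\begin{equation*}
\|\mathscr{L}^{-1}(u\prec v)-u\prec(\mathscr{L}^{-1}v)\|_{H^{\alpha+\beta+2}}\lesssim\|R(u,w)\|_{H^{\alpha+\beta}}\lesssim\|u\|_{H^\alpha}\|w\|_{\mathscr{C}^{\beta+2}}\lesssim\|u\|_{H^\alpha}\|v\|_{\mathscr{C}^\beta},
\end{equation*}
which is the asserted two-derivative gain. The main obstacle is the borderline summability at the level of the blocks: the crude per-block bound only places $R(u,w)$ in $B^{\alpha+\beta}_{2,\infty}$, and recovering the true $H^{\alpha+\beta}=B^{\alpha+\beta}_{2,2}$ membership forces one to exploit the $\ell^2$-structure of $u\in H^\alpha$ through Young's inequality rather than a naive supremum bound. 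This is also the only place where the hypothesis $\alpha<1$ is essential, via positivity of the convolution exponent $1-\alpha$ coming from the gradient term.
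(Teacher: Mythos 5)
Your proof is correct, but note first that the paper contains no proof of Lemma \ref{CE} to compare against: the lemma is quoted from Allez--Chouk [\refcite{AC2015}], so your argument is a self-contained reconstruction of an imported result. More importantly, you have (correctly) repaired a typo in the statement as printed. With $\mathscr{L}$ itself, the claimed bound in $H^{\alpha+\beta+2}$ is false: $u\prec(\mathscr{L}v)$ has regularity of order $\beta-2$, and the commutator $\mathscr{L}(u\prec v)-u\prec(\mathscr{L}v)$ gains exactly $\alpha$ derivatives, landing in $H^{\alpha+\beta-2}$ --- this is the $H$-scale analogue of Lemma \ref{pprec}, where the exponent $\alpha+\beta-2$ is printed correctly. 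The exponent $\alpha+\beta+2$ is only consistent with the resolvent commutator $\mathscr{L}^{-1}(u\prec v)-u\prec(\mathscr{L}^{-1}v)$, which is the form appearing in [\refcite{AC2015}] and the one you prove. Your reading is therefore the right one, and your proof even covers both consistent readings, since your intermediate estimate $\|\mathscr{L}(u\prec w)-u\prec(\mathscr{L}w)\|_{H^{\alpha+\beta}}\lesssim\|u\|_{H^{\alpha}}\|w\|_{\mathscr{C}^{\beta+2}}$ is precisely the corrected ``$\mathscr{L}$-form'' of the lemma applied at regularity $\beta+2$.

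As for the argument itself, every step is sound: the reduction identity via $w=\mathscr{L}^{-1}v$ is an exact algebraic computation using $\mathscr{L}^{-1}\mathscr{L}=\mathrm{id}$; the Leibniz expansion leaves exactly
\begin{equation*}
R(u,w)=-\sum_{j\geq -1}\Bigl[(\Delta S_{j-1}u)\,\Delta_j w+2\,\nabla S_{j-1}u\cdot\nabla\Delta_j w\Bigr],
\end{equation*}
after cancellation of the top-order and $\mu$-terms; the exponents $c=2-\alpha$ and $c=1-\alpha$ in your convolution kernels are correct, with the hypothesis $\alpha<1$ entering only through $1-\alpha>0$, exactly as you identify; and your observation that a naive supremum over the blocks of $u$ yields only $B^{\alpha+\beta}_{2,\infty}$, so that discrete Young's inequality against the summable kernel is needed to recover the $\ell^2$ (hence $H^{\alpha+\beta}$) conclusion, is the correct diagnosis of the one delicate point. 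The only technicality worth a remark is the frequency localization of the summands: with the paper's cutoffs ($S_{j-1}u$ supported in $|k|\leq 2^{j-1}$, $\Delta_j w$ in $2^{j-1}\leq |k|\leq 2^{j+1}$) the lower bound of the product annulus is borderline, but all your estimate requires is that each $R_j$ meets only finitely many output blocks $\Delta_l$ with $l\leq j+O(1)$, which holds regardless, so the equivalence $\|R\|_{H^{\alpha+\beta}}^2\simeq\sum_j 2^{2j(\alpha+\beta)}\|R_j\|_{L^2}^2$ survives up to the usual finite-overlap argument.
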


In order to obtain some estimate uniformly in time, we also need the following time-mollified paraproducts from [\refcite{GIP2015}].

\begin{defn}
Let $\phi: \mathbb{R}\rightarrow \mathbb{R}^{+}$ be a smooth function with compact support $supp \phi \subset [-1,1]$, and $\int_{\mathbb{R}}\phi(s)ds =1$. Let $\eta$ be a time weight. For all $i \geq -1$, we define the operator $Q_i: C_{\eta}\mathscr{C}^{\alpha} \rightarrow C_{\eta}\mathscr{C}^{\alpha} $ by
\begin{equation*}
    Q_i u(t):= \int_{\mathbb{R}}2^{2i}\phi(2^{2i}(t-s))u(s\vee 0)\eta(s)ds.
\end{equation*}
And we define the modified paraproduct of $u,v \in C_{\eta}\mathscr{C}^{\alpha}$ by
\begin{equation}\label{modifiedpara}
    u \pprec v=\sum_{i}\left(\sum_{j=-1}^{i-1}\Delta_{j}(Q_{i} u)\right) \Delta_{i} v.
\end{equation}
\end{defn}
The following two estimates are the useful properties of $\pprec$ from Lemma 2.17 in [\refcite{GH2019}].

\begin{lem}\label{pprec}
Let $\alpha\in (0,1)$, $\beta \in \mathbb{R}$, and let $u\in  C\mathscr{C}^{\alpha}\cap C^{\alpha/2} L^{\infty}$ and $v\in C \mathscr{C}^{\beta}$. Then
\begin{equation*}
    \|\mathscr{L}(u\pprec v)-u\pprec (\mathscr{L}v)\|_{C\mathscr{C}^{\alpha+\beta-2}} \lesssim (\|u\|_{ C\mathscr{C}^{\alpha}}+ \|u\|_{C^{\alpha} L^{\infty}})\|v\|_{ C \mathscr{C}^{\beta}},
\end{equation*}
and
\begin{equation*}
    \|u\prec v -u\pprec v\|_{C\mathscr{C}^{\alpha+\beta}} \lesssim \|u\|_{C^{\alpha/2} L^{\infty}}\|v\|_{ C\mathscr{C}^{\beta}}.
\end{equation*}
\end{lem}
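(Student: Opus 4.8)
The plan is to read both inequalities as single-scale paraproduct estimates. In each case the quantity to be bounded is a series $\sum_i w_i$ whose $i$-th summand $w_i$ has Fourier support in a fixed dyadic annulus $2^i\mathcal{A}$; for such lacunary series the H\"older--Besov norm $\mathscr{C}^\gamma$ is comparable to $\sup_i 2^{i\gamma}\|w_i\|_{L^\infty}$, so the whole task reduces to producing, for each $i$ and uniformly in $t$, an $L^\infty$ bound on $w_i$ decaying like $2^{-i\gamma}$ with the right constant. I will repeatedly use two uniform-in-$i$ facts: the low-pass projection $S_{i-1}=\sum_{j\le i-2}\Delta_j$ is bounded on $L^\infty$, and $Q_i$, being a convolution in time against a probability density, is bounded on $L^\infty$ and, at each fixed time, on $\mathscr{C}^\alpha$, with constants independent of $i$.

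For the first inequality I would first discard the zeroth-order part: since $\mathscr{L}=-\Delta+\mu$ and $\mu$ is scalar, $\mu(u\pprec v)-u\pprec(\mu v)=0$, so it suffices to control $\Delta(u\pprec v)-u\pprec(\Delta v)$. Writing $P_iu:=\sum_{j=-1}^{i-1}\Delta_j(Q_iu)$ so that $u\pprec v=\sum_i (P_iu)\,\Delta_i v$, the Leibniz rule gives $\Delta\big((P_iu)\Delta_iv\big)=(\Delta P_iu)\Delta_iv+2\nabla P_iu\cdot\nabla\Delta_iv+(P_iu)\Delta(\Delta_iv)$; since $\Delta$ commutes with $\Delta_i$, the last term is exactly the $i$-th summand of $u\pprec(\Delta v)$ and cancels. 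Thus the commutator equals $\sum_i\big[(\Delta P_iu)\Delta_iv+2\nabla P_iu\cdot\nabla\Delta_iv\big]$, each summand spectrally localized at $2^i$. Because $P_iu$ has spatial frequencies in a ball of radius $\lesssim 2^i$, Bernstein's inequality (Lemma~\ref{BerI}) together with $\|\Delta_j Q_iu\|_{L^\infty}\lesssim 2^{-j\alpha}\|u\|_{C\mathscr{C}^\alpha}$ gives $\|\Delta P_iu\|_{L^\infty}\lesssim 2^{i(2-\alpha)}\|u\|_{C\mathscr{C}^\alpha}$ and $\|\nabla P_iu\|_{L^\infty}\lesssim 2^{i(1-\alpha)}\|u\|_{C\mathscr{C}^\alpha}$ (the geometric sums over $j\le i-2$ being dominated by their top term, as $\alpha<1$). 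Combining these with $\|\Delta_iv\|_{L^\infty}\lesssim 2^{-i\beta}\|v\|_{\mathscr{C}^\beta}$ and $\|\nabla\Delta_iv\|_{L^\infty}\lesssim 2^{i(1-\beta)}\|v\|_{\mathscr{C}^\beta}$, both summands are bounded by $2^{-i(\alpha+\beta-2)}\|u\|_{C\mathscr{C}^\alpha}\|v\|_{C\mathscr{C}^\beta}$, which is the claim; note that for the purely spatial $\mathscr{L}$ the time-regularity term on the right only strengthens the hypotheses and is not actually needed here.

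For the second inequality I would subtract directly. Using the same low-pass truncation in both paraproducts, $u\prec v-u\pprec v=\sum_i S_{i-1}(u-Q_iu)\,\Delta_iv$, again a sum of terms localized at $2^i$. The crux is to control $u-Q_iu$ by the time regularity alone. Since $\int\phi=1$, at fixed $t$ one has $(u-Q_iu)(t)=\int 2^{2i}\phi\big(2^{2i}(t-s)\big)\big(u(t)-u(s\vee 0)\big)\,ds$, and on the support $|t-s|\lesssim 2^{-2i}$, whence $|t-(s\vee 0)|\lesssim 2^{-2i}$; the $C^{\alpha/2}L^\infty$ bound then yields $\|(u-Q_iu)(t)\|_{L^\infty}\lesssim 2^{-i\alpha}\|u\|_{C^{\alpha/2}L^\infty}$. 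With the uniform $L^\infty$-boundedness of $S_{i-1}$ and $\|\Delta_iv\|_{L^\infty}\lesssim 2^{-i\beta}\|v\|_{\mathscr{C}^\beta}$, the $i$-th summand is $\lesssim 2^{-i(\alpha+\beta)}\|u\|_{C^{\alpha/2}L^\infty}\|v\|_{C\mathscr{C}^\beta}$, and the lacunary-series characterization yields the bound in $\mathscr{C}^{\alpha+\beta}$.

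The step demanding the most care --- and the conceptual heart of the lemma --- is the matching of the parabolic time scale $2^{-2i}$ of $Q_i$ with the spatial frequency $2^i$. It is exactly this matching that turns the time-oscillation of $u$ measured by $C^{\alpha/2}$ into the gain $2^{-i\alpha}$ in the second estimate, and it is what would also absorb the extra contribution $\partial_tQ_iu$ if one ran the same Leibniz argument for the full parabolic operator $\partial_t+\mathscr{L}$ (which is presumably the origin of the time-regularity term in the stated right-hand side). The remaining technical point is to make the spectral-localization and summation rigorous: one must verify that each summand genuinely has Fourier support in a single annulus $2^i\mathcal{A}$, so that the Littlewood--Paley blocks of the series decouple and the $\mathscr{C}^\gamma$ norm is controlled by the supremum of the weighted $L^\infty$ norms, and that all the $Q_i$- and $S_{i-1}$-constants are uniform in $i$; both are standard but need to be recorded to legitimize the passage from the per-scale bounds to the norm estimates.
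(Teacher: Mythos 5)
Your argument is correct. The paper does not prove this lemma at all --- it is quoted verbatim from Lemma 2.17 of [\refcite{GH2019}] --- and your proof is essentially the standard one from that reference: spectral localization of each summand of $u\pprec v$ in an annulus of size $2^i$, Bernstein's inequality for the spatial commutator, and the matching of the parabolic time scale $2^{-2i}$ of $Q_i$ with $\|u(t)-u(s)\|_{L^\infty}\lesssim |t-s|^{\alpha/2}$ for the difference $u\prec v-u\pprec v$. Two remarks. First, you correctly observe that for the purely spatial operator $\mathscr{L}=-\Delta+\mu$ the time-regularity term on the right-hand side is superfluous; the statement as transcribed in the paper appears to have dropped the $\partial_t$ present in the source (its later application in equation (\ref{usharpg}) is indeed to $\partial_t+\mathscr{L}$), and your closing remark about absorbing $\partial_t Q_i u$ via $\int\phi'=0$ is exactly the missing piece for that stronger version, so nothing is lost. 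Second, the paper's definitions of $\prec$ and $\pprec$ use mismatched low-frequency cutoffs ($j\le i-2$ versus $j\le i-1$); you silently align them (``the same low-pass truncation in both paraproducts''), which is the right reading but worth stating explicitly, since with the literal definitions an extra diagonal term $\sum_i\Delta_{i-1}(Q_iu)\,\Delta_i v$ would appear that is not controlled by $\|u\|_{C^{\alpha/2}L^\infty}$ alone.
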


We will need the following interpolations result for Besov space.
\begin{lem}\label{interpolation}
Let $\eta$ be time weights, $\gamma>0$, $\theta \geq 0$, and $\psi \in C_{\eta}\mathscr{C}^{\gamma}$. Then for any $\alpha \in [0, \gamma]$, we have
\begin{equation}
    \|\psi\|_{C_{\eta^{1+\theta}}\mathscr{C}^{\alpha}} \lesssim \| \psi\|_{C_{\tau^{1/(k-2)}}L^{\infty}}^{\alpha/\gamma}  \| \psi\|_{C_{\eta^{1+\theta\gamma/\alpha}}\mathscr{C}^{\gamma}}^{1-\alpha/\gamma}.
\end{equation}
Moreover, if $\alpha \in (0,1)$ then
\begin{equation}
    \|\psi\|_{C^{\alpha/2}_{\eta}L^{\infty}} \lesssim \|\psi\|^{1/2}_{C^{\alpha}_{\eta}L^{\infty}}\|\psi\|^{1/2}_{C_{\tau^{1/(k-2)}}L^{\infty}}.
\end{equation}
\end{lem}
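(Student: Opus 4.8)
The plan is to reduce each bound to a ``frozen'' interpolation inequality---spatial for the first estimate, temporal for the second---and then transfer the resulting pointwise-in-time bounds into the weighted supremum norms by comparing the time weights. The only structural fact I use about the weights is that, since $\tau(t)=1-e^{-t}\in[0,1)$, any weight of the form $\eta=\tau^{c}$ with $c\ge 1/(k-2)$ satisfies $\eta(t)\le\tau^{1/(k-2)}(t)$ pointwise; this ordering is what lets me replace $\eta$ by $\tau^{1/(k-2)}$ inside the low-regularity factor. I would record this comparison as an explicit preliminary step.

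For the first estimate I would first establish the classical spatial interpolation inequality $\|f\|_{\mathscr{C}^{\alpha}}\lesssim\|f\|_{L^{\infty}}^{1-\alpha/\gamma}\|f\|_{\mathscr{C}^{\gamma}}^{\alpha/\gamma}$ for $0\le\alpha\le\gamma$. This is immediate from the Littlewood--Paley characterization: for each block one writes $2^{j\alpha}\|\Delta_j f\|_{L^\infty}=(2^{j\gamma}\|\Delta_j f\|_{L^\infty})^{\alpha/\gamma}\|\Delta_j f\|_{L^\infty}^{1-\alpha/\gamma}$ and bounds $\|\Delta_j f\|_{L^\infty}\lesssim\|f\|_{L^\infty}$ (the blocks are uniformly bounded on $L^\infty$ by Young's inequality, using Lemma~\ref{BerI}), then takes the supremum over $j$. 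Applying this to $\psi(t)$ at each fixed $t$ and multiplying by $\eta^{1+\theta}(t)$, I would split the weight as $\eta^{1+\theta}=(\tau^{1/(k-2)})^{1-\alpha/\gamma}(\eta^{1+\theta\gamma/\alpha})^{\alpha/\gamma}\cdot R$, where a direct exponent computation gives $R=(\eta/\tau^{1/(k-2)})^{1-\alpha/\gamma}$. The ordering $\eta\le\tau^{1/(k-2)}$ forces $R\le 1$, so each factor pairs with its corresponding norm; taking $\sup_t$ and distributing it over the two nonnegative factors yields the claim. Note that the weight exponents $1/(k-2)$ and $1+\theta\gamma/\alpha$ appearing on the right are precisely those that make this split consistent.

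For the second estimate I would apply the standard H\"older-in-time interpolation to the weighted curve $g(t):=\eta(t)\psi(t)$, viewed as a map into $L^{\infty}$: balancing the two regimes in $\|g(t)-g(s)\|_{L^\infty}\le\min(2\|g\|_{\infty},\,[g]_{\alpha}|t-s|^{\alpha})$ over the threshold $|t-s|$ gives $[g]_{\alpha/2}\lesssim\|g\|_{\infty}^{1/2}[g]_{\alpha}^{1/2}$, where $[g]_{\beta}$ denotes the H\"older-$\beta$ seminorm in time. By definition $[g]_{\alpha/2}$ and $[g]_{\alpha}$ are exactly the seminorms appearing in $\|\psi\|_{C^{\alpha/2}_{\eta}L^\infty}$ and $\|\psi\|_{C^{\alpha}_{\eta}L^\infty}$, and $\|g\|_{\infty}=\|\psi\|_{C_\eta L^\infty}$, so this controls the seminorm part. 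The supremum part is handled by writing $\|\psi\|_{C_\eta L^\infty}=\|\psi\|_{C_\eta L^\infty}^{1/2}\|\psi\|_{C_\eta L^\infty}^{1/2}$ and bounding one factor by $\|\psi\|_{C^\alpha_\eta L^\infty}$. In both parts the remaining weighted $L^\infty$ norm carrying the weight $\eta$ is replaced by the one with weight $\tau^{1/(k-2)}$ via $\eta\le\tau^{1/(k-2)}$, producing the stated geometric mean.

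I expect the main obstacle to be the weight bookkeeping rather than the interpolation itself: one must verify that the residual factor $R=(\eta/\tau^{1/(k-2)})^{1-\alpha/\gamma}$ is genuinely $\le 1$, which is where the structural assumption $\eta\le\tau^{1/(k-2)}$ is essential. Since $\tau\le 1$, this holds automatically for every weight used in the paper (each being a power of $\tau$ with exponent at least $1/(k-2)$, such as $\tau^{1/(k-2)+\alpha/2}$ and $\rho$), but without it the transfer from the frozen-time inequalities to the weighted norms would fail. The two interpolation inputs themselves are routine once the Littlewood--Paley and time-H\"older optimizations are set up.
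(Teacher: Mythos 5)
Your proposal is correct and follows essentially the same route as the paper: the first bound is obtained by interpolating each Littlewood--Paley block between $L^{\infty}$ and $\mathscr{C}^{\gamma}$ and redistributing the time weight, and the second by balancing the H\"older seminorm of the weighted curve $\eta\psi$ against its supremum. You are in fact more careful than the paper on the one point it glosses over, namely the replacement of the weight $\eta$ by $\tau^{1/(k-2)}$ in the low-regularity factor: this does require the pointwise ordering $\eta\le\tau^{1/(k-2)}$ that you isolate as a preliminary step (and which holds for every weight the paper actually uses, all being powers of $\tau\le 1$ with exponent at least $1/(k-2)$), whereas the paper simply writes the substituted norm in its final line without comment. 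One remark: your argument produces the exponent $1-\alpha/\gamma$ on the $L^{\infty}$ factor and $\alpha/\gamma$ on the $\mathscr{C}^{\gamma}$ factor, consistent with the classical interpolation inequality and with the paper's own first display; the exponents printed in the lemma's statement (and in the last two lines of the paper's proof) are transposed relative to this, which appears to be a typo rather than a gap on your side, since the subsequent applications of the lemma (e.g.\ in the proof of Lemma~\ref{nlSch}, where the $L^{\infty}$ factor carries the exponent $2/(2+\beta)=1-\alpha/\gamma$) use precisely the placement you derive.
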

\begin{proof}
For spatial regularity, it holds
\begin{align*}
    \eta(t)^{1+\theta}\|\Delta_k \psi(t)\|_{L^{\infty}} \lesssim & \left[ \eta(t)\|\Delta_k \psi\|_{L^{\infty}}\right]^{1-\alpha/\gamma}\left[ \eta(t)^{1+\theta\gamma/\alpha}\|\Delta_k \psi\|_{L^{\infty}}\right]^{\alpha/\gamma} \\
     \lesssim & 2^{-\alpha k} \left[ \eta(t)\|\Delta_k \psi\|_{L^{\infty}}\right]^{1-\alpha/\gamma} \left[ \eta(t)^{1+\theta\gamma/\alpha}\| \psi\|_{\mathscr{C}^{\gamma}}\right]^{\alpha/\gamma}.
\end{align*}
Thus for each $t>0$, we have
\begin{equation*}
    \eta(t)^{1+\theta}\|\psi(t)\|_{\mathscr{C}^{\alpha}} \lesssim \left[ \eta(t)\| \psi\|_{L^{\infty}}\right]^{\alpha/\gamma} \left[ \eta(t)^{1+\theta\gamma/\alpha}\| \psi\|_{\mathscr{C}^{\gamma}}\right]^{1-\alpha/\gamma}.
\end{equation*}
Taking supremum in time, we obtain
\begin{equation*}
    \|\psi\|_{C_{\eta^{1+\theta}}\mathscr{C}^{\alpha}} \lesssim \| \psi\|_{C_{\tau}L^{\infty}}^{\alpha/\gamma}  \| \psi\|_{C_{\eta^{1+\theta\gamma/\alpha}}\mathscr{C}^{\gamma}}^{1-\alpha/\gamma}.
\end{equation*}
For time regularity, we have
\begin{align*}
    \|\psi\|_{C^{\alpha/2}_{\eta}L^{\infty}} = & \|\psi\|_{C_{\eta}L^{\infty}} + \sup_{t>s\geq0}\frac{\|\eta(t)\psi(t)-\eta(s) \psi(s)\|_{L^{\infty}}}{|t-s|^{\alpha/2}} \\
    \leq & \|\psi\|_{C_{\eta}L^{\infty}} + \sup_{t>s\geq0}\frac{\|\eta(t)\psi(t)-\eta(s) \psi(s)\|^{1/2}_{L^{\infty}}}{|t-s|^{\alpha/2}} \|\psi\|^{1/2}_{C_{\eta}L^{\infty}} \\
    \leq & \|\psi\|^{1/2}_{C^{\alpha}_{\eta}L^{\infty}}\|\psi\|^{1/2}_{C_{\eta}L^{\infty}}.
\end{align*}
This completes the proof.
\end{proof}

\begin{lem}\label{interpolationH}
Let $\beta \in (0,1)$ and $\psi \in H^{\beta}$. Then  for arbitrary $\delta>0$, we have
\begin{equation}
    \|\psi\|^2_{H^{\beta}} \lesssim \delta \| \nabla \psi \|_{L^2}^2 + C_{\delta} \|\psi \|_{L^2}^2.
\end{equation}
\end{lem}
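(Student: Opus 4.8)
The plan is to exploit that the claimed bound is an interpolation inequality of Young type: since $H^{\beta} = B^{\beta}_{2,2}$ sits strictly between $L^2 = H^0$ and $H^1$ for $\beta \in (0,1)$, its squared norm can be traded against $\|\nabla\psi\|_{L^2}^2$ with an arbitrarily small prefactor at the price of a large multiple of $\|\psi\|_{L^2}^2$. I would carry this out through the Littlewood-Paley characterization $\|\psi\|_{H^{\beta}}^2 \simeq \sum_{j\geq -1} 2^{2j\beta}\|\Delta_j\psi\|_{L^2}^2$, which keeps the argument consistent with the tools used throughout the paper and self-contained via Lemma \ref{BerI}. First I would fix a threshold $N \geq 0$ to be chosen at the end and split the sum into a low-frequency part $j \leq N$ and a high-frequency part $j > N$.

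For the low block I would bound $2^{2j\beta} \leq 2^{2N\beta}$ uniformly, so that $\sum_{j\leq N} 2^{2j\beta}\|\Delta_j\psi\|_{L^2}^2 \leq 2^{2N\beta}\sum_{j\geq -1}\|\Delta_j\psi\|_{L^2}^2 \lesssim 2^{2N\beta}\|\psi\|_{L^2}^2$. For the high block I would factor $2^{2j\beta} = 2^{2j}\,2^{-2j(1-\beta)} \leq 2^{-2N(1-\beta)}2^{2j}$, using $1-\beta>0$ and $j>N$, and then invoke Bernstein's inequality to convert the frequency factor into a derivative, namely $2^{2j}\|\Delta_j\psi\|_{L^2}^2 \lesssim \|\nabla\Delta_j\psi\|_{L^2}^2$; summing over $j>N$ gives $\sum_{j>N} 2^{2j\beta}\|\Delta_j\psi\|_{L^2}^2 \lesssim 2^{-2N(1-\beta)}\|\nabla\psi\|_{L^2}^2$. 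Combining the two estimates yields $\|\psi\|_{H^{\beta}}^2 \lesssim 2^{-2N(1-\beta)}\|\nabla\psi\|_{L^2}^2 + 2^{2N\beta}\|\psi\|_{L^2}^2$, and given $\delta>0$ I would choose $N$ large enough that the implicit constant times $2^{-2N(1-\beta)}$ is at most $\delta$, then set $C_{\delta}$ to be the implicit constant times $2^{2N\beta}$.

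There is essentially no serious obstacle here, since this is a standard interpolation bound; the only points requiring mild care are the two-sided Bernstein equivalence $2^{2j}\|\Delta_j\psi\|_{L^2}^2 \simeq \|\nabla\Delta_j\psi\|_{L^2}^2$ (the block $j=-1$ must be absorbed into the $\|\psi\|_{L^2}^2$ term, where $2^{2j}$ is not bounded below) and verifying that the resulting $C_{\delta}$ depends only on $\delta$ and $\beta$. An equally short alternative bypasses Littlewood-Paley entirely: on the Fourier side one has $\|\psi\|_{H^{\beta}}^2 \simeq \sum_k (1+|k|^{2\beta})|\hat\psi(k)|^2$, and Young's inequality with conjugate exponents $1/\beta$ and $1/(1-\beta)$ gives $|k|^{2\beta} \leq \varepsilon |k|^2 + C_{\varepsilon}$ for every frequency $k$, which sums directly to the claim once $\varepsilon$ is taken proportional to $\delta$.
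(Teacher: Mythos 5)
Your argument is correct, but it follows a different route from the paper's. The paper also works with the dyadic characterization $\|\psi\|_{H^{\beta}}^{2}\simeq\sum_{j}2^{2j\beta}\|\Delta_{j}\psi\|_{L^{2}}^{2}$, but instead of splitting frequencies it writes each summand as $\bigl(2^{2j}\|\Delta_{j}\psi\|_{L^{2}}^{2}\bigr)^{\beta}\bigl(\|\Delta_{j}\psi\|_{L^{2}}^{2}\bigr)^{1-\beta}$ and applies H\"older's inequality with exponents $1/\beta$ and $1/(1-\beta)$ to the whole sum, arriving at the multiplicative interpolation bound $\|\psi\|_{H^{\beta}}^{2}\lesssim\|\nabla\psi\|_{L^{2}}^{2\beta}\|\psi\|_{L^{2}}^{2(1-\beta)}$, after which the weighted Young inequality applied to this single product of two numbers produces the $\delta$--$C_{\delta}$ form. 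You instead cut the sum at a threshold $N$, control the low block by $2^{2N\beta}\|\psi\|_{L^{2}}^{2}$ and the high block by $2^{-2N(1-\beta)}\|\nabla\psi\|_{L^{2}}^{2}$, and then tune $N$ in terms of $\delta$; your Fourier-side variant, using $|k|^{2\beta}\leq\varepsilon|k|^{2}+C_{\varepsilon}$ pointwise, is even more elementary. The two approaches buy essentially the same thing here: yours makes the dependence of $C_{\delta}$ on $\delta$ and $\beta$ completely explicit (roughly $C_{\delta}\sim\delta^{-\beta/(1-\beta)}$ up to constants) and avoids H\"older on sequence spaces, while the paper's yields the sharper multiplicative inequality as an intermediate product, which is occasionally useful in its own right. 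One small point of care in your high-frequency step: the bound $2^{2j}\|\Delta_{j}\psi\|_{L^{2}}^{2}\lesssim\|\nabla\Delta_{j}\psi\|_{L^{2}}^{2}$ is the \emph{reverse} Bernstein inequality, valid because for $j\geq 0$ the blocks are supported in annuli bounded away from the origin; the paper's Lemma 2.2 as stated only gives the forward direction for supports in a ball, so you should either cite the annulus version or note it explicitly --- you have already flagged the $j=-1$ block correctly, and since your threshold satisfies $N\geq 0$ that block sits in the low-frequency part where no lower bound on $2^{2j}$ is needed.
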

\begin{proof}
Since $\|\psi\|_{H^{\beta}} \simeq \|\psi\|_{B_{2,2}^{\beta}}$, by Bernstein inequality (Lemma \ref{BerI}), H\"{o}lder inequality and weighted Young inequality, we have
\begin{align}
    \|\psi\|^2_{H^{\beta}} & = \sum_{i\geq -1} 2^{2 \beta k } \|\Delta_{i} \psi\|^2_{L^2} \nonumber \\
    & = \sum_{i\geq -1} 2^{2 \beta k } \|\Delta_{i} \psi\|^{2\beta}_{L^2} \|\Delta_{i} \psi\|^{2(1-\beta)}_{L^2} \nonumber \\
    & \leq \left[ \sum_{i\geq -1}  2^{2  k }\|\Delta_{i}\psi\|^{2}_{L^2} \right]^{2\beta} \left[ \sum_{i\geq -1}  \|\Delta_{i}\psi\|^{2}_{L^2} \right]^{2(1-\beta)} \nonumber \\
    & \lesssim\| \nabla \psi \|^{\beta} \|\psi\|^{1-\beta}_{L^2}  \nonumber \\
    & \lesssim \delta \| \nabla \psi \|_{L^2}^2 + C_{\delta} \|\psi \|_{L^2}^2.
\end{align}
This completes the proof.
\end{proof}

\subsection{Renormalization and paracontrolled distributions}

The spatial white noise $\xi$ on $\mathbb{T}^2$ is a centered Gaussian process with value in $\mathcal{S}'(\mathbb{T}^2)$ such that for all $f,g\in \mathcal{S}(\mathbb{T}^2)$, we have $\mathbb{E}[\xi(f)\xi(g)]=\langle f,g\rangle_{L^2 (\mathbb{T}^2)}$. Let $(\hat{\xi}(k))_{k\in \mathbb{Z}^2}$ be a sequence of i.i.d. centered complex Gaussian random variables with covariance
\begin{equation*}
    \mathbb{E}(\hat{\xi}(k)\Bar{\hat{\xi}}(l)) = \delta(k-l),
\end{equation*}
and $\hat{\xi}(k) = \Bar{\hat{\xi}}(-k)$. Then the spatial white noise $\xi$ on $\mathbb{T}^2$ can be defined as follows
\begin{equation*}
    \xi(x) =\sum_{k \in \mathbb{Z}^2} \hat{\xi}(k)e^{2\pi ik \cdot x}.
\end{equation*}

Moreover, the  spatial white noise $\xi$ take value in $\mathscr{C}^{-1-\kappa}$ for all $\kappa>0$. Since $\xi$ is only a distribution, $u\xi$ is ill-defined in classic sense. How to let singular term $u\xi$ make sense is a main challenge in studying the parabolic Anderson mode equation. It is natural to replace $\xi$ by a smooth approximation $\xi_{\epsilon}$ which is given by the convolution of $\xi$ with a rescaled mollifier $\varphi$. More precisely, we let $\varphi: \mathbb{T}^2 \rightarrow \mathbb{R}^{+}$ be a smooth function with $\int_{\mathbb{T}^2}\varphi dt =1$, and define $\xi^{\epsilon} = \epsilon^{-2}\varphi(\epsilon\cdot)\ast\xi$ for $\epsilon >0$ as the mollification of $\xi$.

For the PAM equation (\ref{GPAM}), we take 
\begin{equation*}
    \vartheta = (-\Delta + \mu)^{-1}\xi = \int_0^{\infty} e^{t(\Delta- \mu)} \xi dt,
\end{equation*}
where $(e^{t(\Delta - \mu)})_{t\geq 0}$ denotes the semigroup generated by $\Delta - \mu$. Then $\vartheta \in \mathscr{C}^{1-\kappa}$, and $\|\vartheta\|_{\mathscr{C}^{1-\kappa}} \lesssim \|\xi\|_{\mathscr{C}^{-1-\kappa}}$.
In order to obtain a well-defined area $\vartheta \diamond \xi$, we have to renormalize the product by “subtracting an infinite constant” as following arguments (see Lemma 5.8 in \refcite{GIP2015}).

\begin{lem}\label{renormalize}
If $\vartheta_{\epsilon} = (-\Delta + \mu)^{-1}\xi_{\epsilon}$, then the wick product $\vartheta \diamond \xi$ can be approximated as
\begin{equation*}
    \lim_{\epsilon\rightarrow 0}\mathbb{E}[\|\vartheta\diamond\xi-(\vartheta_{\epsilon}\circ\xi_{\epsilon}-C_{\epsilon})\|^p_{\mathscr{C}^{-2\kappa}}]=0
\end{equation*}
for all $p\geq 1$ and $\kappa >0$ with the renormalization constant
\begin{equation*}
    C_{\epsilon}= \mathbb{E}(\vartheta_{\epsilon}\circ\xi_{\epsilon})  = \sum_{k\in \mathbb{Z}^2} \frac{|\mathscr{F}_{\mathbb{T}^2}\varphi(\epsilon k)|^2}{|k|^2+ \mu} .
\end{equation*}
\end{lem}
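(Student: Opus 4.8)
The plan is to exploit that, once the constant $C_\epsilon$ is subtracted, the field $W_\epsilon:=\vartheta_\epsilon\circ\xi_\epsilon-C_\epsilon$ lives in a fixed (the second) Wiener chaos, so that all its $L^p(\Omega)$ moments are controlled by its $L^2(\Omega)$ moment via Gaussian hypercontractivity (Nelson's estimate), $\mathbb{E}|X|^p\lesssim_p(\mathbb{E}|X|^2)^{p/2}$ for $X$ in the second chaos. Combined with the Besov embedding of Lemma \ref{Besovem}, namely $B^{-\kappa}_{p,p}(\mathbb{T}^2)\hookrightarrow\mathscr{C}^{-\kappa-2/p}\hookrightarrow\mathscr{C}^{-2\kappa}$ whenever $p\ge 2/\kappa$, this reduces the whole statement to a second-moment bound on the Littlewood--Paley blocks. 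Indeed, interchanging expectation with the $(q,x)$-sum by Fubini and applying hypercontractivity blockwise gives
\[
\mathbb{E}\|W_\epsilon\|_{\mathscr{C}^{-2\kappa}}^p\lesssim_p\sum_{q\ge-1}2^{-\kappa pq}\int_{\mathbb{T}^2}\big(\mathbb{E}|\Delta_q W_\epsilon(x)|^2\big)^{p/2}\,dx,
\]
so everything rests on controlling $\mathbb{E}|\Delta_q W_\epsilon(x)|^2$ uniformly in $x,q,\epsilon$; the factor $2^{-\kappa pq}$ supplies summability, which is precisely the small loss of regularity that forces the target space $\mathscr{C}^{-2\kappa}$ rather than $\mathscr{C}^{0}$.

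Next I would write out the Fourier expansion. Since $\xi_\epsilon$ has coefficients $\mathscr{F}_{\mathbb{T}^2}\varphi(\epsilon k)\hat\xi(k)$ and $\vartheta_\epsilon=(-\Delta+\mu)^{-1}\xi_\epsilon$ has coefficients $(|k|^2+\mu)^{-1}\mathscr{F}_{\mathbb{T}^2}\varphi(\epsilon k)\hat\xi(k)$, the resonant product expands as $\vartheta_\epsilon\circ\xi_\epsilon=\sum_{k,l}a_{kl}(\epsilon)\,\hat\xi(k)\hat\xi(l)\,e^{2\pi i(k+l)\cdot x}$, where $a_{kl}(\epsilon)=(|k|^2+\mu)^{-1}\mathscr{F}_{\mathbb{T}^2}\varphi(\epsilon k)\mathscr{F}_{\mathbb{T}^2}\varphi(\epsilon l)\sum_{|i-j|\le1}\varrho_i(k)\varrho_j(l)$, the resonance sum forcing $|k|\simeq|l|$. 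Taking expectations and using $\mathbb{E}[\hat\xi(k)\hat\xi(l)]=\delta(k+l)$ together with $\hat\xi(-k)=\overline{\hat\xi(k)}$, the contraction selects $l=-k$, and since $\sum_{|i-j|\le1}\varrho_i(k)\varrho_j(-k)\equiv1$ by the partition-of-unity property one recovers exactly $\mathbb{E}[\vartheta_\epsilon\circ\xi_\epsilon]=\sum_k|\mathscr{F}_{\mathbb{T}^2}\varphi(\epsilon k)|^2(|k|^2+\mu)^{-1}=C_\epsilon$. Hence $W_\epsilon$ is centred and coincides with its second-chaos projection $\sum_{k+l\ne0}a_{kl}(\epsilon):\!\hat\xi(k)\hat\xi(l)\!:e^{2\pi i(k+l)\cdot x}$, confirming that subtracting $C_\epsilon$ is precisely Wick ordering.

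The heart of the argument is the variance computation. By the second-chaos isometry (Wick's theorem for the fourth Gaussian moment), $\mathbb{E}|\Delta_q W_\epsilon(x)|^2\simeq\sum_{k+l\ne0}\big(|a_{kl}(\epsilon)|^2+a_{kl}(\epsilon)\overline{a_{lk}(\epsilon)}\big)$, independent of $x$, where the outer block cut-off $\varrho_q(k+l)$ now restricts $|k+l|\simeq2^q$. Bounding $|\mathscr{F}_{\mathbb{T}^2}\varphi(\epsilon\cdot)|\le\|\varphi\|_{L^1}$ and grouping resonant modes at a common dyadic scale $|k|\simeq|l|\simeq2^n$ (on which both the diagonal weight $(|k|^2+\mu)^{-2}$ and the cross weight $(|k|^2+\mu)^{-1}(|l|^2+\mu)^{-1}$ are $\simeq2^{-4n}$), the constraint $|k+l|\simeq2^q$ permits $\simeq2^{2q}$ choices of $k+l$ and $\simeq2^{2n}$ choices of $k$, so that
\[
\mathbb{E}|\Delta_q W_\epsilon(x)|^2\lesssim\sum_{n\gtrsim q}2^{2n+2q}\,2^{-4n}\simeq2^{2q}\sum_{n\gtrsim q}2^{-2n}\lesssim1,
\]
uniformly in $q$ and $\epsilon$, which yields $\sup_\epsilon\mathbb{E}\|W_\epsilon\|_{\mathscr{C}^{-2\kappa}}^p<\infty$. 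For the convergence I would run the same computation on the difference $W_\epsilon-W_{\epsilon'}$: its coefficients carry the factor $\mathscr{F}_{\mathbb{T}^2}\varphi(\epsilon k)\mathscr{F}_{\mathbb{T}^2}\varphi(\epsilon l)-\mathscr{F}_{\mathbb{T}^2}\varphi(\epsilon'k)\mathscr{F}_{\mathbb{T}^2}\varphi(\epsilon'l)$, which by smoothness of $\varphi$ and $\mathscr{F}_{\mathbb{T}^2}\varphi(0)=\int\varphi=1$ is $O\big((\epsilon\vee\epsilon')^{\delta}2^{\delta n}\big)$ for some $\delta\in(0,1)$. Inserting this extra factor into the dyadic sum gives $\mathbb{E}|\Delta_q(W_\epsilon-W_{\epsilon'})(x)|^2\lesssim(\epsilon\vee\epsilon')^{2\delta}2^{2\delta q}$, and choosing $\delta<\kappa$ makes $\sum_q2^{(-\kappa+\delta)pq}$ convergent, so that $(W_\epsilon)$ is Cauchy in $L^p(\Omega;\mathscr{C}^{-2\kappa})$. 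Its limit is by definition $\vartheta\diamond\xi$, which is the asserted convergence.

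The step I expect to be the main obstacle is the variance computation with its bookkeeping: correctly applying Wick's theorem to the fourth Gaussian moment, handling the resonance cut-off $\sum_{|i-j|\le1}\varrho_i(k)\varrho_j(l)$ and the outer cut-off $\varrho_q(k+l)$ simultaneously, and carrying out the dyadic frequency counting so that the powers of $2^q$ cancel to a bound uniform in $q$ (and an honest rate in $\epsilon$ for the difference). Verifying that the subtracted diagonal reproduces exactly $C_\epsilon$, and that the connected cross-contractions $a_{kl}\overline{a_{lk}}$ are of the same order and absorbed into the same bound, is the other delicate point.
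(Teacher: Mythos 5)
The paper does not actually prove this lemma: it states it and defers to Lemma 5.8 of [\refcite{GIP2015}], and your argument is a correct reconstruction of exactly that standard proof (Wiener--chaos decomposition plus Gaussian hypercontractivity and the Besov embedding $B^{-\kappa}_{p,p}\hookrightarrow\mathscr{C}^{-\kappa-2/p}$, reduced to a second-moment bound on Littlewood--Paley blocks, with the dyadic counting $\sum_{n\gtrsim q}2^{2n+2q}2^{-4n}\lesssim 1$ and the mollifier rate $O((\epsilon\vee\epsilon')^{\delta}2^{\delta n})$, $\delta<\kappa$, giving the Cauchy property in $\epsilon$). The identification $\mathbb{E}[\vartheta_\epsilon\circ\xi_\epsilon]=C_\epsilon$ via $\sum_{|i-j|\le1}\varrho_i(k)\varrho_j(k)=\bigl(\sum_i\varrho_i(k)\bigr)^2=1$ and the variance computation via the two connected contractions are both sound, so no gap.
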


Using the modified paraproduct $\pprec$, we introduce paracontrolled distributions as follows.
\begin{defn}
Let $\alpha\in (2/3,1)$ and $\beta \in (0,\alpha]$ be such that $2\alpha+\beta >2$. Let $\rho'$ be a time weight. We say a pair $(u, u')\in { C_{\rho'}\mathscr{C}^{\alpha}}\times{ C_{\rho'}\mathscr{C}^{\beta}}$ is called paracontrolled by $\vartheta$ if 
\begin{equation*}
    u^{\sharp}:= u-u'\pprec \vartheta \in { C_{\rho'}\mathscr{C}^{\alpha+\beta}}.
\end{equation*}
\end{defn}

Now we define $u\diamond\xi$ by above the renormalization argument of singular term $\vartheta\diamond\xi$ and paracontrolled distributions. If $u\in C_{\rho}\mathscr{C}^{\alpha}$ is paracontrolled by $\vartheta$: $u^{\sharp}:= u- u\pprec\vartheta \in C_{\rho}\mathscr{C}^{2\alpha} $, then we define $u\diamond\xi$ as following
\begin{align*}
    u\diamond\xi= & u\prec \xi + u\succ \xi+ u\circ\xi \\
    = & u\prec \xi + u\succ \xi +(u\pprec\vartheta)\circ\xi + u^{\sharp}\circ\xi  \\
              = & u\prec \xi + u\succ \xi + (u\pprec\vartheta - u\prec \vartheta)\circ\xi + C(u,\vartheta,\xi) +u (\vartheta \diamond \xi) +u^{\sharp}\circ\xi \\
               = & \lim_{\epsilon\rightarrow 0}(u\prec \xi_{\epsilon} + u\succ \xi_{\epsilon} + (u\pprec\vartheta_{\epsilon} - u\prec \vartheta_{\epsilon})\circ\xi_{\epsilon} + C(u,\vartheta_{\epsilon},\xi_{\epsilon}) + u (\vartheta_{\epsilon} \circ \xi_{\epsilon}-C_{\epsilon}) +u^{\sharp}\circ\xi_{\epsilon}). 
\end{align*}
Thus the singular term $u\diamond\xi$ can be formally written as 
\begin{equation*}
    u\diamond\xi =\lim_{\epsilon\rightarrow 0}u\xi_{\epsilon}-C_{\epsilon}u= u\xi - \infty\cdot u.
\end{equation*}

\subsection{Parabolic Schauder estimates}
We recall the following Schauder estimate for the heat semigroup $P_t:=e^{t(\Delta-\mu)}$ from [\refcite{GIP2015}].

\begin{lem}\label{Lsch}
Let $\alpha\in \mathbb{R}$, $\beta \in [0,2]$, and let $P_t$ be the semigroup generated by $\Delta-\mu$ with $\mu>0$. Then for every $t\geq 0$, $u_0 \in \mathscr{C}^{\alpha-\beta}$, we have 
\begin{equation*}
    \|P_t u_0\|_{\mathscr{C^{\alpha}}} \lesssim e^{-\mu t}t^{-\beta/2}\|u_0\|_{\mathscr{C}^{\alpha-\beta}}.
\end{equation*}
\end{lem}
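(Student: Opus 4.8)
The plan is to argue block-by-block in the Littlewood--Paley decomposition and exploit the smoothing of the heat semigroup on each dyadic frequency annulus. Since $P_t = e^{t(\Delta-\mu)} = e^{-\mu t}e^{t\Delta}$ is a Fourier multiplier, it commutes with every $\Delta_j$, so that $\Delta_j P_t u_0 = e^{-\mu t}\,e^{t\Delta}\Delta_j u_0$ and
\[
\|P_t u_0\|_{\mathscr{C}^{\alpha}} = e^{-\mu t}\sup_{j\geq -1} 2^{j\alpha}\|e^{t\Delta}\Delta_j u_0\|_{L^{\infty}}.
\]
The whole estimate then reduces to a single frequency-localized smoothing bound for $e^{t\Delta}$, after which I would split $2^{j\alpha}=2^{j\beta}\cdot 2^{j(\alpha-\beta)}$, use $2^{j(\alpha-\beta)}\|\Delta_j u_0\|_{L^{\infty}}\leq \|u_0\|_{\mathscr{C}^{\alpha-\beta}}$, and be left to control $\sup_j 2^{j\beta}\times(\text{decay in }j)$.

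The key step is to establish, for $j\geq 1$, the kernel bound $\|e^{t\Delta}\Delta_j f\|_{L^{\infty}}\lesssim e^{-ct 2^{2j}}\|\Delta_j f\|_{L^{\infty}}$ for some $c>0$. To this end I would write $e^{t\Delta}\Delta_j f = g_{j,t}*\Delta_j f$, where $\widehat{g_{j,t}}(k)=\tilde\varrho_j(k)\,e^{-4\pi^2 t|k|^2}$ and $\tilde\varrho_j$ is a smooth fattened cutoff, supported in a dyadic annulus $\{|k|\sim 2^j\}$ bounded away from the origin and equal to $1$ on $\operatorname{supp}\varrho_j$; Young's inequality then gives $\|e^{t\Delta}\Delta_j f\|_{L^{\infty}}\leq \|g_{j,t}\|_{L^1}\|\Delta_j f\|_{L^{\infty}}$. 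The decay of $\|g_{j,t}\|_{L^1}$ follows from the parabolic scaling $k=2^j\eta$: this exhibits $g_{j,t}(x)=2^{2j}H_s(2^j x)$ with $s=t 2^{2j}$ and $\widehat{H_s}(\eta)=\tilde\varrho(\eta)\,e^{-4\pi^2 s|\eta|^2}$, so that $\|g_{j,t}\|_{L^1}=\|H_s\|_{L^1}$. Since $\tilde\varrho$ is smooth, compactly supported and vanishes near the origin, one has $e^{-4\pi^2 s|\eta|^2}\leq e^{-cs}$ on its support, while the derivatives of $\tilde\varrho(\eta)e^{-4\pi^2 s|\eta|^2}$ stay bounded uniformly in $s\geq 0$; this yields rapid spatial decay of $H_s$ and hence $\|H_s\|_{L^1}\lesssim e^{-cs}=e^{-ct 2^{2j}}$.

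With this in hand, for $j\geq 1$ I obtain $2^{j\alpha}\|e^{t\Delta}\Delta_j u_0\|_{L^{\infty}}\lesssim 2^{j\beta}e^{-ct 2^{2j}}\|u_0\|_{\mathscr{C}^{\alpha-\beta}}$, and writing $2^{j\beta}e^{-ct 2^{2j}}=t^{-\beta/2}(t 2^{2j})^{\beta/2}e^{-ct 2^{2j}}$ together with $\sup_{y\geq 0}y^{\beta/2}e^{-cy}<\infty$ produces the desired factor $t^{-\beta/2}$. The finitely many low blocks $j\in\{-1,0\}$ I would treat separately: there the truncated multiplier has kernel of $L^1$ norm bounded uniformly in $t$, so $\|e^{t\Delta}\Delta_j u_0\|_{L^{\infty}}\lesssim \|\Delta_j u_0\|_{L^{\infty}}$, and this contribution is absorbed by $t^{-\beta/2}\geq 1$ in the short-time regime and by the exponential factor $e^{-\mu t}$ in the long-time regime. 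Taking the supremum over $j$ and reinstating $e^{-\mu t}$ then gives the claim.

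The step I expect to be the main obstacle is the uniform-in-$s$ control of $\|H_s\|_{L^1}$ that yields the genuine exponential gain $e^{-ct2^{2j}}$ rather than a mere power of $t2^{2j}$: one must verify that the spatial decay of $H_s$ coming from the smoothness of $\tilde\varrho$ is not degraded as $s\to\infty$, and on $\mathbb{T}^2$ this requires a short additional argument comparing the periodised kernel with its $\mathbb{R}^2$ counterpart (e.g.\ via Lemma~\ref{BerI} or a Poisson-summation/multiplier bound). A secondary point worth flagging is the low-frequency large-$t$ regime, where the heat part supplies no decay and one relies entirely on the mass $\mu>0$; this is precisely why the hypothesis $\mu>0$ and the factor $e^{-\mu t}$ enter the statement.
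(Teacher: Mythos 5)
The paper does not actually prove this lemma; it is quoted from [\refcite{GIP2015}], and the proof there is precisely the block-by-block heat-kernel argument you outline, so your route coincides with the intended one. The high-frequency part is sound: for $j\geq 0$ the bound $\|e^{t\Delta}\Delta_j f\|_{L^{\infty}}\lesssim e^{-ct2^{2j}}\|\Delta_j f\|_{L^{\infty}}$ follows as you describe, with two small touch-ups. First, the derivatives of $\tilde\varrho(\eta)e^{-4\pi^2 s|\eta|^2}$ are not uniformly bounded in $s$ (they grow polynomially in $s$), but each term retains a factor $e^{-4\pi^2 s|\eta|^2}\leq e^{-cs}$ on the annulus, so the full quantities $\partial^{\gamma}\bigl(\tilde\varrho\, e^{-4\pi^2 s|\cdot|^2}\bigr)$ are still $O(e^{-c's})$, which is what the weighted kernel bound actually needs. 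Second, on $\mathbb{T}^2$ the convolution kernel is the periodization of its $\mathbb{R}^2$ counterpart (Poisson summation), and periodization can only decrease the $L^1$ norm, so the Euclidean computation transfers directly.

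The one place the argument does not close as written is the low-frequency, large-time regime, and it is worth being precise here because the inequality as stated is not literally true there: for $u_0\equiv 1$ one has $P_t u_0=e^{-\mu t}$, so $\|P_t u_0\|_{\mathscr{C}^{\alpha}}\simeq e^{-\mu t}$, whereas the right-hand side is $\simeq e^{-\mu t}t^{-\beta/2}$, which is strictly smaller for $\beta>0$ and $t$ large. Your proposed absorption of the blocks $j\in\{-1,0\}$ ``by the exponential factor $e^{-\mu t}$'' cannot work, because that factor appears on both sides of the target inequality and cancels. The standard repair is either to restrict to $t\in(0,T]$ with a $T$-dependent constant, or to relinquish part of the mass: since $e^{-\mu t/2}\leq C_{\beta,\mu}\,t^{-\beta/2}$ for $t\geq 1$, the estimate holds with $e^{-\mu t/2}$ (or any exponential rate strictly below $\mu$) in place of $e^{-\mu t}$, which is all that is ever used in the paper. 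With that modification your proof is complete.
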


For time weight $\tau(t):=1-e^{-t}$, we have the following Schauder estimates from [\refcite{GH2019}].

\begin{lem}\label{tauSch}
Define $ \tau(t):=1-e^{-t}$ be a time weight. Let $\alpha\in \mathbb{R}$ and $\beta,\beta_i \in [0,2)$. Assume that $v \in C_{[0,\infty)}\mathscr{C}^{\alpha}$ with $v(0)=0$ be a solution of 
\begin{equation*}
    \mathscr{L}v = \sum_{i}f_i.
\end{equation*}
Then we have
\begin{equation*}
    \|v\|_{C_{[0,\infty)}\mathscr{C}^{\alpha}} \lesssim \|\tau^{\beta/2}v\|_{C_{[0,\infty)}\mathscr{C}^{\alpha+\beta-2}} + \sum_{i}\|\tau^{\beta_1/2}f_i \|_{C_{[0,\infty)}\mathscr{C}^{\alpha+\beta_i -2}}.
\end{equation*}
Moreover, for every $\alpha \in (0,2)$ and $\beta_i \in [0,2)$ such that $\alpha + \beta_i -2 <0$, we have
\begin{equation*}
    \|v\|_{C_{[0,\infty)}^{\alpha/2}L^{\infty}} \lesssim \|v\|_{C_{[0,\infty)}\mathscr{C}^{\alpha}} + \sum_{i}\|\tau^{\beta_1/2}f_i \|_{C_{[0,\infty)}\mathscr{C}^{\alpha+\beta_i -2}}.
\end{equation*}
\end{lem}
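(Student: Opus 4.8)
The plan is to reduce both inequalities to the mild (Duhamel) representation of $v$ and then to uniform-in-time estimates for weighted convolutions against the heat kernel, with Lemma~\ref{Lsch} as the only analytic input.

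Since $v(0)=0$, I would first write $v(t)=\int_0^t P_{t-s}\big(\sum_i f_i(s)\big)\,ds$ with $P_t=e^{t(\Delta-\mu)}$. Applying Lemma~\ref{Lsch} to each summand, with a gain of $2-\beta_i$ derivatives from $\mathscr{C}^{\alpha+\beta_i-2}$ to $\mathscr{C}^{\alpha}$, gives
\begin{equation*}
\|P_{t-s}f_i(s)\|_{\mathscr{C}^{\alpha}}\lesssim e^{-\mu(t-s)}(t-s)^{\beta_i/2-1}\|f_i(s)\|_{\mathscr{C}^{\alpha+\beta_i-2}}.
\end{equation*}
Bounding $\|f_i(s)\|_{\mathscr{C}^{\alpha+\beta_i-2}}\le\tau(s)^{-\beta_i/2}\|\tau^{\beta_i/2}f_i\|_{C_{[0,\infty)}\mathscr{C}^{\alpha+\beta_i-2}}$ then reduces the first inequality to the single claim
\begin{equation*}
\sup_{t\ge0}\int_0^t e^{-\mu(t-s)}(t-s)^{\beta_i/2-1}\tau(s)^{-\beta_i/2}\,ds<\infty.
\end{equation*}
Because $v(0)=0$ forces the above representation, the forcing terms alone control $\|v\|_{C_{[0,\infty)}\mathscr{C}^{\alpha}}$, so the additional nonnegative term $\|\tau^{\beta/2}v\|_{C_{[0,\infty)}\mathscr{C}^{\alpha+\beta-2}}$ on the right-hand side is obtained a fortiori.

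I expect this last convolution bound to be the main obstacle, since it must balance the heat-kernel singularity $(t-s)^{\beta_i/2-1}$ at $s=t$ against the weight singularity $\tau(s)^{-\beta_i/2}\simeq s^{-\beta_i/2}$ at $s=0$. I would split into $t\le1$ and $t>1$. For $t\le1$ use $\tau(s)\simeq s$ on $[0,t]$ and the substitution $s=t\sigma$ to land on the Beta integral $\int_0^1(1-\sigma)^{\beta_i/2-1}\sigma^{-\beta_i/2}\,d\sigma$, finite exactly because $\beta_i\in(0,2)$ (the endpoint $\beta_i=0$ is genuinely excluded, the kernel $(t-s)^{-1}$ being non-integrable). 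For $t>1$ split at $s=1$: on $[0,1]$ the factor $e^{-\mu(t-s)}(t-s)^{\beta_i/2-1}$ is bounded while $s^{-\beta_i/2}$ is integrable, and on $[1,t]$ the weight is bounded below and $\int_0^\infty e^{-\mu u}u^{\beta_i/2-1}\,du<\infty$.

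For the time-Hölder estimate I would, for $t>s$, use $v(t)-v(s)=(P_{t-s}-\mathrm{Id})v(s)+\int_s^t P_{t-r}\big(\sum_i f_i\big)\,dr$. The first term is handled by the standard bound $\|(P_h-\mathrm{Id})w\|_{L^\infty}\lesssim h^{\alpha/2}\|w\|_{\mathscr{C}^{\alpha}}$, yielding $(t-s)^{\alpha/2}\|v\|_{C_{[0,\infty)}\mathscr{C}^{\alpha}}$. For the Duhamel term the hypothesis $\alpha+\beta_i-2<0$ lets me map $\mathscr{C}^{\alpha+\beta_i-2}$ into $L^\infty$ (through $\mathscr{C}^{\epsilon}\hookrightarrow L^\infty$ with $\epsilon>0$ arbitrarily small) at cost $(t-r)^{(\alpha+\beta_i-2)/2}$; after inserting the weight, matters reduce to $\int_s^t(t-r)^{(\alpha+\beta_i-2)/2}\tau(r)^{-\beta_i/2}\,dr\lesssim(t-s)^{\alpha/2}$, which follows from the same small-/large-time splitting together with a scaling comparison on $[s,t]$. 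The delicate point here is extracting precisely the exponent $\alpha/2$ uniformly in $s,t$.
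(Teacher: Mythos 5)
The paper does not prove this lemma itself; it imports it from [\refcite{GH2019}], where the argument is exactly the Duhamel/semigroup computation you propose (the extra term $\|\tau^{\beta/2}v\|_{C\mathscr{C}^{\alpha+\beta-2}}$ on the right appears there only because of a localization-in-time step that your global-in-time Duhamel representation makes unnecessary, so obtaining it ``a fortiori'' is legitimate). Your reduction of the first estimate to the finiteness of $\sup_t\int_0^t e^{-\mu(t-s)}(t-s)^{\beta_i/2-1}\tau(s)^{-\beta_i/2}\,ds$ is correct for $\beta_i\in(0,2)$, and your treatment of the time-H\"older bound is sound: the needed inequality $\int_s^t(t-r)^{(\alpha+\beta_i-2)/2}\tau(r)^{-\beta_i/2}\,dr\lesssim(t-s)^{\alpha/2}$ does hold uniformly (reduce to $t-s\le 1$ via the sup norm, then split into $s\le t-s$, where one compares to the full integral from $0$ and uses $t\le 2(t-s)$, and $s\ge t-s$, where $\tau(s)\gtrsim t-s$ so the weight can be pulled out), though you only gesture at this.

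The one genuine defect is your parenthetical claim that ``the endpoint $\beta_i=0$ is genuinely excluded.'' The lemma asserts $\beta_i\in[0,2)$, and the paper actually invokes the case $\beta_i=0$: in the proof of Lemma \ref{nlSch} the sources $\psi$, $\Psi$, $f(\psi)$ are measured in $\mathscr{C}^{\beta}=\mathscr{C}^{(2+\beta)-2}$ with the \emph{same} weight as the target, i.e.\ with no extra factor of $\tau^{\beta_i/2}$. The statement is nevertheless true at $\beta_i=0$; what fails is only your use of the operator-norm Schauder bound of Lemma \ref{Lsch}, which produces the non-integrable kernel $(t-s)^{-1}$. The standard repair is to run the estimate blockwise: for each Littlewood--Paley block, $\|\Delta_j\int_0^tP_{t-s}f(s)\,ds\|_{L^{\infty}}\lesssim\int_0^te^{-c(2^{2j}+\mu)(t-s)}\|\Delta_jf(s)\|_{L^{\infty}}\,ds\lesssim(2^{2j}+\mu)^{-1}2^{-(\alpha-2)j}\sup_s\|f(s)\|_{\mathscr{C}^{\alpha-2}}\lesssim 2^{-\alpha j}\sup_s\|f(s)\|_{\mathscr{C}^{\alpha-2}}$, which recovers the full two-derivative gain in $B^{\alpha}_{\infty,\infty}$ without any singular time integral. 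You should replace the crude semigroup bound by this blockwise version (it also covers $\beta_i>0$ uniformly); with that change the proof is complete.
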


We also need the following Schauder estimate for parabolic equations with polynomial nonlinear term.
\begin{lem}\label{nlSch}
Define $ \tau(t):=1-e^{-t}$ be a time weight. Let $\mu>0$, $\beta \in [0,1)$, and $\Psi \in C_{\tau^{1+1/(k-1)+\beta/2}}\mathscr{C}^{\beta}$. Assume that 
\begin{equation*}
    \psi \in C_{\tau^{1+1/(k-1)+\beta/2}}\mathscr{C}^{2+\beta}\cap C_{\tau^{1+1/(k-1)+\beta/2}}\mathscr{C}^{\beta} \cap C_{\tau^{1/(k-2)}}L^{\infty}
\end{equation*} 
be a classical solution to
\begin{equation*}
    \mathscr{L}\psi = f(\psi) + \Psi, \quad \psi(0)= 0.
\end{equation*}
If $f$ satisfies the dissipative assumption (\ref{assumf}), then
\begin{align}
    &\| \psi\|_{C_{\tau^{1+1/(k-1)+\beta/2}}\mathscr{C}^{2+\beta}} + \| \psi\|_{C^1_{\tau^{1+1/(k-1)+\beta/2}}L^{\infty}} \nonumber \\
    \lesssim & \|\psi\|_{C_{\tau^{1+1/(k-1)+\beta/2}}\mathscr{C}^{\beta}} + \|\Psi\|_{C_{\tau^{1+1/(k-1)+\beta/2}}\mathscr{C}^{\beta}} + 1+ \left( \|\psi\|_{C_{\tau^{1/(k-2)}}L^{\infty}}^{k-2+ 2/(2+\beta)} \right)^{\frac{2+\beta}{2}}.    
\end{align}
\end{lem}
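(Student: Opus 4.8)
The plan is to read the equation as the linear parabolic problem $\mathscr{L}\psi = g$ with forcing $g=f(\psi)+\Psi$, apply the weighted parabolic Schauder estimate of Lemma~\ref{tauSch}, and control the nonlinear contribution $f(\psi)$ by its polynomial growth together with interpolation and Young's inequality. Throughout write the relevant time weight as $w=\tau^{a}$ and set $\theta=\beta/(2+\beta)\in[0,1)$; the precise value of $a$ will be forced by the scaling bookkeeping and I will display the constraint it must satisfy.

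First I would move the weight onto the unknown. Since $\tau(0)=0$, the function $w\psi$ vanishes at $t=0$, and because $\mathscr{L}=\partial_t-\Delta+\mu$ with $w$ depending only on $t$ it solves $\mathscr{L}(w\psi)=w\,(f(\psi)+\Psi)+w'\psi$. Applying Lemma~\ref{tauSch} to $v=w\psi$ with $\alpha=2+\beta$ and all auxiliary exponents set to $0$ (so every right‑hand term sits at spatial regularity $\beta$) turns the left side into $\|w\psi\|_{C\mathscr{C}^{2+\beta}}=\|\psi\|_{C_w\mathscr{C}^{2+\beta}}$ and gives
\[
\|\psi\|_{C_w\mathscr{C}^{2+\beta}}\lesssim \|\psi\|_{C_w\mathscr{C}^{\beta}}+\|\Psi\|_{C_w\mathscr{C}^{\beta}}+\|w\,f(\psi)\|_{C\mathscr{C}^{\beta}}+\|w'\psi\|_{C\mathscr{C}^{\beta}}.
\]
The first two terms already appear on the right of the claim, so it remains to bound the nonlinear term and the commutator $w'\psi$. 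For the nonlinear term the only feature of assumption~(\ref{assumf}) I need is that $f$ is a polynomial of degree $k-1$: iterating the product and paraproduct estimates of Lemma~\ref{Bparaproduct} yields, for $\beta\in[0,1)$, $\|\psi^{i}\|_{\mathscr{C}^{\beta}}\lesssim\|\psi\|_{L^{\infty}}^{i-1}\|\psi\|_{\mathscr{C}^{\beta}}$ and hence $\|f(\psi)\|_{\mathscr{C}^{\beta}}\lesssim(1+\|\psi\|_{L^{\infty}}^{k-2})\|\psi\|_{\mathscr{C}^{\beta}}$.

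Interpolating $\|\psi\|_{\mathscr{C}^{\beta}}\lesssim\|\psi\|_{L^{\infty}}^{1-\theta}\|\psi\|_{\mathscr{C}^{2+\beta}}^{\theta}$ via Lemma~\ref{interpolation} and regrouping the weight, the top‑order part of $w\|f(\psi)\|_{\mathscr{C}^{\beta}}$ takes the form $(w\|\psi\|_{\mathscr{C}^{2+\beta}})^{\theta}\cdot \tau^{a(1-\theta)}\|\psi\|_{L^{\infty}}^{k-1-\theta}$. Since $\theta<1$, Young's inequality absorbs $\epsilon\,w\|\psi\|_{\mathscr{C}^{2+\beta}}$ into the left side and leaves $C_{\epsilon}\,\tau^{a}\|\psi\|_{L^{\infty}}^{p}$ with $p=(k-1-\theta)/(1-\theta)=(k-1)+(k-2)\beta/2$; a direct check shows $p=(k-2+2/(2+\beta))(2+\beta)/2$, matching the exponent in the statement. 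The commutator is handled identically: $|w'|\lesssim\tau^{a-1}$, and after the same interpolation and Young step it produces a term of shape $\tau^{a-1-\beta/2}\|\psi\|_{L^{\infty}}$ to the first power.

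The main obstacle is the bookkeeping of the time weights, and this is where the precise power of $\tau$ in $w$ is pinned down. Using $\|\psi\|_{L^{\infty}}\le \tau^{-1/(k-2)}\|\psi\|_{C_{\tau^{1/(k-2)}}L^{\infty}}$, the residual top‑order term carries the factor $\tau^{a-p/(k-2)}$ and the commutator term the factor $\tau^{a-1-\beta/2}$; for the suprema in time to be finite both exponents must be nonnegative. Since $a-p/(k-2)=a-\tfrac{k-1}{k-2}-\tfrac{\beta}{2}$ and $a-1-\tfrac{\beta}{2}$, both constraints hold (with equality in the top‑order one) exactly when $a=1+\tfrac1{k-2}+\tfrac\beta2$, which is the calibration that makes the weight close the estimate; the top‑order residual is then controlled by $\|\psi\|_{C_{\tau^{1/(k-2)}}L^{\infty}}^{p}$ and the commutator by $\tau^{1/(k-2)}\|\psi\|_{L^{\infty}}\le\|\psi\|_{C_{\tau^{1/(k-2)}}L^{\infty}}\lesssim 1+\|\psi\|_{C_{\tau^{1/(k-2)}}L^{\infty}}^{p}$. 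Together with the lower‑degree terms of $f$, whose lower powers of $\|\psi\|_{L^{\infty}}$ are absorbed by Young at the price of the additive constant, this produces exactly the ``$+1$'' and the top‑power term of the claim. Finally, for the $C^{1}_{w}L^{\infty}$‑bound I would read $\partial_{t}\psi=\Delta\psi-\mu\psi+f(\psi)+\Psi$ off the equation: $\|\Delta\psi\|_{L^{\infty}}\lesssim\|\psi\|_{\mathscr{C}^{2+\beta}}$ is already controlled, while $\|f(\psi)\|_{L^{\infty}}\lesssim 1+\|\psi\|_{L^{\infty}}^{k-1}$ and $\|\Psi\|_{L^{\infty}}\lesssim\|\Psi\|_{\mathscr{C}^{\beta}}$ carry, after weighting by $w$, nonnegative powers of $\tau$, so the time‑derivative bound closes with the same right‑hand side.
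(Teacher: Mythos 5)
Your proposal is correct and follows essentially the same route as the paper: apply the weighted Schauder estimate of Lemma~\ref{tauSch}, bound $\|f(\psi)\|_{\mathscr{C}^{\beta}}$ by $(1+\|\psi\|_{L^{\infty}}^{k-2})\|\psi\|_{\mathscr{C}^{\beta}}$, interpolate $\mathscr{C}^{\beta}$ between $L^{\infty}$ and $\mathscr{C}^{2+\beta}$ via Lemma~\ref{interpolation}, and absorb the top-order piece by Young's inequality; you are merely more explicit about conjugating by the weight and about the resulting commutator $w'\psi$, which the paper handles implicitly. One substantive remark: your bookkeeping forces the weight exponent $a=1+\tfrac{1}{k-2}+\tfrac{\beta}{2}$, whereas the statement reads $1+\tfrac{1}{k-1}+\tfrac{\beta}{2}$, a strictly stronger norm for which the $\tau$-powers would not close; since the paper later applies this lemma with the weight $\rho=\tau^{1+1/(k-2)+(3\alpha-2)/2}$ (i.e.\ with $1/(k-2)$), the exponent in the statement is evidently a typo and your calibration is the correct one.
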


\begin{proof}
By Lemma \ref{tauSch}, we have
\begin{align*}
    & \| \psi\|_{C_{\tau^{1+1/(k-1)+\beta/2}}\mathscr{C}^{2+\beta}} \\
    \lesssim & \|\psi\|_{C_{\tau^{1+1/(k-1)+\beta/2}}\mathscr{C}^{\beta}} + \|\Psi\|_{C_{\tau^{1+1/(k-1)+\beta/2}}\mathscr{C}^{\beta}} + \|f(\psi)\|_{C_{\tau^{1+1/(k-1)+\beta/2}}\mathscr{C}^{\beta}}.    
\end{align*}
The interpolation result in Lemma \ref{interpolation} and the weighted Young inequality lead to
\begin{align*}
  &\|f(\psi)\|_{C_{\tau^{1+1/(k-1)+\beta/2}}\mathscr{C}^{\beta}} \\
   \lesssim & 1 + \|\psi\|_{C_{\tau^{1/(k-2)}}L^{\infty}}^{k-2} \|\psi\|_{C_{\rho^{1+(k-2)\beta/2}}\mathscr{C}^{\beta}} \\
    \lesssim &  1 + \|\psi\|_{C_{\tau^{1/(k-2)}}L^{\infty}}^{k-2} \|\psi\|_{C_{\tau^{1/(k-2)}}L^{\infty}}^{2/(2+\beta)} \|\psi\|_{C_{\tau^{1+1/(k-1)+\beta/2}}\mathscr{C}^{2+\beta}}^{\beta/(2+\beta)} \\
    \lesssim  & 1+  C_{\lambda}\left( \|\psi\|_{C_{\tau^{1/(k-2)}}L^{\infty}}^{k-2+ 2/(2+\beta)} \right)^{\frac{2+\beta}{2}} + \lambda  \|\psi\|_{C_{\tau^{1+1/(k-1)+\beta/2}}\mathscr{C}^{2+\beta}} 
\end{align*}
for arbitrary $\lambda>0$. Choosing $\lambda$ small enough, we have
\begin{align*}
        \| \psi\|_{C_{\tau^{1+1/(k-1)+\beta/2}}\mathscr{C}^{2+\beta}}
    \lesssim & \|\psi\|_{C_{\tau^{1+1/(k-1)+\beta/2}}\mathscr{C}^{\beta}} + \|\Psi\|_{C_{\tau^{1+1/(k-1)+\beta/2}}\mathscr{C}^{\beta}} \\
    & + 1+ \left( \|\psi\|_{C_{\tau^{1/(k-2)}}L^{\infty}}^{k-2+ 2/(2+\beta)} \right)^{\frac{2+\beta}{2}}.
\end{align*}
Using Lemma \ref{tauSch}, we obtain the time regularity. The proof is completed.
\end{proof}

We also need the following parabolic coercive estimates.

\begin{lem}\label{coercive}
Define $ \tau(t):=1-e^{-t}$ be a time weight. Let $\mu>0$, $\beta \in [0,1)$, and $\Psi \in C_{\tau^{1+1/(k-2)}}L^{\infty}$. Assume that 
\begin{equation*}
    \psi \in C_{\tau^{1+1/(k-1)+\beta/2}}\mathscr{C}^{2+\beta}\cap C_{\tau^{1+1/(k-1)+\beta/2}}\mathscr{C}^{\beta} \cap C_{\tau^{1/(k-2)}}L^{\infty}
\end{equation*} 
be a classical solution to
\begin{equation*}
    \mathscr{L}\psi = f(\psi) + \Psi, \quad \psi(0)= 0.
\end{equation*}
If $f$ satisfies the dissipative assumption (\ref{assumf}), then we have a priori estimates 
\begin{equation*}
    \|\psi\|_{C_{\tau^{1/(k-2)}}L^{\infty}} \lesssim 1+ \|\Psi\|_{C_{\rho^{1+1/(k-2)}}L^{\infty}}^{1/(k-1)}.
\end{equation*}
\end{lem}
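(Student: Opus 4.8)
The plan is to step outside the paracontrolled framework and treat this purely as a scalar maximum-principle estimate, which is legitimate because $\psi$ is assumed to be a genuine classical solution. Writing $M(t):=\|\psi(t)\|_{L^\infty}$, I would apply the parabolic maximum principle to both $\psi$ and $-\psi$: at a spatial point $x_t$ where $\psi(t,\cdot)$ attains its positive maximum one has $\Delta\psi(t,x_t)\le 0$, while at a point where it attains its negative minimum $\Delta\psi\ge0$; in either case the diffusion term has a favourable sign and drops out. By the envelope (Danskin) lemma $M$ is locally Lipschitz, and for a.e.\ $t$ its derivative is controlled by the zeroth-order part of the equation. Feeding in the dissipative hypothesis (\ref{assumf}) in the form $sf(s)\le C_0-C_2|s|^k$, which yields $f(s)\le C_0/s-C_2 s^{k-1}$ at a positive maximum and the mirror bound at a negative minimum, both extrema give the same inequality
\[
\dot M(t)\ \le\ -\mu M(t)+\frac{C_0}{M(t)}-C_2 M(t)^{k-1}+\|\Psi(t)\|_{L^\infty}\ =:\ \widehat G\bigl(M(t),t\bigr).
\]
The decisive structural fact is that $M\mapsto\widehat G(M,t)$ is strictly decreasing on $(0,\infty)$.

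Next I would manufacture an explicit supersolution matched to the time weight. Let $K$ denote the weighted $L^\infty$ norm of $\Psi$ supplied by the hypothesis, so that $\|\Psi(t)\|_{L^\infty}\le K\,\tau(t)^{-(k-1)/(k-2)}$ after using $1+\tfrac1{k-2}=\tfrac{k-1}{k-2}$, and I would test the barrier $N(t):=A\,\tau(t)^{-1/(k-2)}$. The exponents are chosen precisely so that $N^{k-1}$, the forcing bound, and $\dot N=-\tfrac{A}{k-2}\tau^{-(k-1)/(k-2)}e^{-t}$ all carry the identical singular factor $\tau^{-(k-1)/(k-2)}$. Since $\tau\le1$ and $e^{-t}\le1$, the supersolution inequality $\dot N\ge\widehat G(N,t)$ reduces to the purely algebraic condition $C_2A^{k-1}\ge C_0+K+\tfrac{A}{k-2}$, which is met by choosing $A\simeq 1+K^{1/(k-1)}$: the dissipative exponent $k-1$ dominates the linear correction, and the constant $C_0$ together with the bounded term $C_0/N\le C_0$ are absorbed into the ``$1+$''.

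The comparison is then immediate. Because $N(t)\to+\infty$ as $t\to0^+$ while $M(0)=0$, the nonnegative Lipschitz function $z:=(M-N)^+$ vanishes for small $t$; on the set $\{M>N\}$ (where $M>N\ge A\ge1>0$, so $\widehat G$ is in its nice regime) the monotonicity of $\widehat G$ together with the supersolution property gives $\dot z\le\widehat G(M,t)-\widehat G(N,t)\le0$ a.e., whence $z\equiv0$ and $M(t)\le N(t)$ for all $t$. Evaluating against the weight, $\tau(t)^{1/(k-2)}\|\psi(t)\|_{L^\infty}\le A\lesssim 1+K^{1/(k-1)}$, which is exactly the asserted coercive bound.

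I expect the genuine difficulty to be two-fold. The technically delicate point is the rigorous reduction to the scalar inequality: one must verify that $t\mapsto\|\psi(t)\|_{L^\infty}$ is Lipschitz and that its a.e.\ derivative really obeys the maximum-principle bound, which is where the assumed regularity $\psi\in C_{\ldots}\mathscr{C}^{2+\beta}\cap C^1_{\ldots}L^\infty$ is needed to make the pointwise extremum argument legitimate (and incidentally ensures the singular term $C_0/M$ is never probed, since at any contact point $M=N\ge1$). The conceptually essential point is the exponent bookkeeping behind the barrier: everything rests on the identity $(k-1)\cdot\tfrac1{k-2}=1+\tfrac1{k-2}$ tying the output weight $\tau^{1/(k-2)}$ to the input weight $\tau^{1+1/(k-2)}$, and it is this balance that forces the power $1/(k-1)$ on $\|\Psi\|$ and lets the dissipation $-C_2M^{k-1}$ swallow the singular forcing near $t=0$.
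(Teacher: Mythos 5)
Your argument is correct, and it reaches the stated bound by a route that differs in mechanics, though not in spirit, from the paper's. The paper weights first and maximizes second: it sets $\hat\psi=\tau^{1/(k-2)}\psi$, locates a global space--time maximum of $\hat\psi$, and reads off an algebraic inequality $\mu M\tau+M^{k-1}\lesssim \|\Psi\|_{C_{\tau^{1+1/(k-2)}}L^\infty}+M$ at that point; because the supremum of $\hat\psi$ need not be attained at finite time, it must then patch the argument with the auxiliary weight $(1+|t|^2)^{-\delta}$ and a limiting procedure. You instead maximize in space only, turn the PDE into the scalar differential inequality $\dot M\le -\mu M+C_0/M-C_2M^{k-1}+\|\Psi(t)\|_{L^\infty}$ via the envelope lemma, and compare $M$ against the explicit barrier $N=A\tau^{-1/(k-2)}$. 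Both proofs ultimately rest on the same exponent identity $(k-1)\cdot\tfrac1{k-2}=1+\tfrac1{k-2}$ and on the dissipativity $sf(s)\le C_0-C_2|s|^k$ at both signed extrema, and both produce the power $1/(k-1)$ on $\|\Psi\|$ for the same reason. What your version buys is that the ``supremum at infinity'' complication disappears entirely, since the comparison $M\le N$ is established on every finite interval with a uniform constant; what it costs is the obligation to justify rigorously that $t\mapsto\|\psi(t)\|_{L^\infty}$ is locally Lipschitz with the claimed a.e.\ derivative bound (Rademacher plus Danskin), a point you correctly flag and which the assumed classical regularity of $\psi$ does support. One cosmetic remark: the lemma's conclusion is stated with the weight $\rho^{1+1/(k-2)}$ on $\Psi$, but the hypothesis and the paper's own proof use $\tau^{1+1/(k-2)}$; your reading ($\tau$) is the consistent one.
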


\begin{proof}
Let $\hat{\psi}(t,x)=\psi(t,x)\tau(t)^{\frac{1}{k-2}}$. Suppose $\hat{\psi}$ attains its global maximum $M$ at $(t^{\ast},x^{\ast}) \in [0,\infty)\times\mathbb{T}^2 $. We first assume that $M>0$. Since $\hat{\psi}(0)=\psi(0)\tau(0)^{\frac{1}{k-2}}=0$, $t^{\ast}>0$, and we have
\begin{equation*}
    \partial_t \hat{\psi}(t^{\ast},x^{\ast})=0, \quad -\Delta\hat{\psi}(t^{\ast},x^{\ast})\geq 0.
\end{equation*}
Furthermore, $\hat{\psi}$ satisfies
\begin{align*}
    & \partial_t \hat{\psi}(t^{\ast},x^{\ast}) + (-\Delta+\mu)\hat{\psi}(t^{\ast},x^{\ast}) \\
    = & F(\psi(t^{\ast},x^{\ast}))\tau(t^{\ast})^{\frac{1}{k-2}} + \Psi(t^{\ast},x^{\ast})\tau(t^{\ast})^{\frac{1}{k-2}} + \psi(t^{\ast},x^{\ast})\partial_t\tau(t^{\ast})^{\frac{1}{k-2}},
\end{align*}
Then by assumption (\ref{assumf}), we have
\begin{align*}
    \mu M\tau(t^{\ast}) + M^{k-1} & \leq \Psi(t^{\ast},x^{\ast})\tau^{1+1/(k-2)}(t^{\ast}) + \frac{1}{k-2}(\tau\partial_t \tau)(t^{\ast})M \\
    & \leq \|\Psi\|_{C_{\tau^{1+1/(k-2)}}L^{\infty}}^{1/(k-1)} + \frac{1}{k-2}(\tau\partial_t \tau)(t^{\ast})M.
\end{align*}
Since $k\geq 3$, the weighted term $\frac{1}{k-2}(\tau\partial_t \tau)$ is bounded. Then we conclude that
\begin{equation*}
    \hat{\psi}(t^{\ast},x^{\ast}) \lesssim \|\psi\|^{1/(k-1)}_{C_{\tau^{1/(k-2)}}L^{\infty}(\mathbb{T}^2)}+\|\Psi\|_{C_{\tau^{1+1/(k-2)}}L^{\infty}}^{1/(k-1)}.
\end{equation*}
Applying same argument to $-\hat{\psi}$, we also have
\begin{equation*}
    -\hat{\psi}(t^{\ast},x^{\ast}) \lesssim \|\psi\|^{1/(k-1)}_{C_{\tau^{1/(k-2)}}L^{\infty}(\mathbb{T}^2)}+\|\Psi\|_{C_{\tau^{1+1/(k-2)}}L^{\infty}}^{1/(k-1)}.
\end{equation*}
Then by weighted Young inequality, we get
\begin{equation*}
    \|\psi\|_{C_{\tau^{1/(k-2)}}L^{\infty}} \lesssim 1 +\|\Psi\|_{C_{\tau^{1+1/(k-2)}}L^{\infty}}^{1/(k-1)}.
\end{equation*}
If $\hat{\psi}=\psi\tau^{1/(k-2)}$ does not attain its global maximum at finite time, then for all $t_0 >0$ it holds that $\hat{\psi}(t_0) < \lim_{t\rightarrow \infty}\hat{\psi}(t)$.
Since $\hat{\psi}$ is bounded and continuous on $[0,\infty)\times \mathbb{T}^2$, then for every $\delta >0$ we have
\begin{equation*}
    \lim_{t \rightarrow \infty}\psi(t)\tau(t)^{1/(k-2)}(1+|t|^2)^{-\delta} = 0
\end{equation*}
Thus $\psi(t)\tau(t)^{1/(k-2)}(1+|t|^2)^{-\delta}$ attain its global maximum at finite time. Now we can use same argument in above proof to $\psi(t)\tau(t)^{1/(k-2)}(1+|t|^2)^{-\delta}$, and the conclusion follows by letting $\delta \rightarrow 0$. 
\end{proof}

Similar with Propostion A.2 in [\refcite{GH2019}], we have the following existence result.

\begin{lem}\label{Exapp}
Let $T>0$, $\mu>0$, $u_0 \in C^{\infty}(\mathbb{T}^2)$, $\xi_{\epsilon}\in C^{\infty}(\mathbb{T}^2)$ is the mollification of spatial white noise $\xi$. 
Then there exists a unique classical solution $u\in C^{\infty}(\mathbb{R}^{+}\times\mathbb{T}^2)$ to 
\begin{equation}\label{appPAM}
    \mathscr{L}u = F(u)+u\diamond\xi_{\epsilon}, \quad u(0)=u_0.
\end{equation}
\end{lem}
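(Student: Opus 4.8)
The plan is to exploit the fact that, because $\xi_\epsilon$ is \emph{smooth}, the renormalized product degenerates into an ordinary one. Indeed, for a smooth mollification one has
$$u\diamond\xi_\epsilon = u\xi_\epsilon - C_\epsilon u,$$
with $C_\epsilon$ a finite deterministic constant, so that (\ref{appPAM}) is nothing but the classical semilinear parabolic equation
$$\partial_t u = \Delta u + G(u,x), \qquad G(s,x):= F(s) + (\xi_\epsilon(x) - \mu - C_\epsilon)\,s,$$
whose right-hand side is a polynomial in $s$ with smooth (in $x$) coefficients. I would therefore dispense with paracontrolled calculus here and treat the problem by deterministic parabolic theory, recording the mild formulation
$$u(t) = P_t u_0 + \int_0^t P_{t-s}\bigl(F(u(s)) + (\xi_\epsilon - C_\epsilon)u(s)\bigr)\,ds, \qquad P_t = e^{t(\Delta-\mu)}.$$

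For local well-posedness I would run a Banach fixed-point argument for this Duhamel map on a small ball of $C([0,T_0];C(\mathbb{T}^2))$ (equivalently in a Hölder class $C_{[0,T_0]}\mathscr{C}^{\sigma}$). Since $G(\cdot,x)$ is polynomial it is locally Lipschitz in $u$ uniformly in $x$, while the smoothing estimate of Lemma \ref{Lsch} supplies the integrable singularity in $t-s$ needed to close the contraction; this yields a \emph{unique} mild solution on a maximal interval $[0,T^{*})$, with $T^{*}$ bounded below in terms of $\|u_0\|_{L^\infty}$. Smoothness then follows by bootstrapping: feeding the current regularity of $u$ back into the Schauder estimate of Lemma \ref{Lsch} repeatedly improves the spatial regularity of $F(u)+(\xi_\epsilon-C_\epsilon)u$, and the parabolic gain in time upgrades $u$ to $C^\infty((0,T^{*})\times\mathbb{T}^2)$; because $u_0\in C^\infty$ the solution is in fact smooth up to $t=0$.

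The global statement rests on an a priori bound that rules out finite-time blow-up, and this is exactly where the dissipative hypothesis (\ref{assumf}) is used. I would argue as in Lemma \ref{coercive}: at a spatial maximum of $u(t)$ one has $-\Delta u\ge 0$, whence, writing $M:=\max_x u$,
$$\frac{d}{dt}M \le F(M) + \|\xi_\epsilon\|_{L^\infty}M - (\mu+C_\epsilon)M,$$
and the bound $F(s)s\le C_0 - C_2|s|^k$ gives $F(M)\le C_0/M - C_2 M^{k-1}$ for large $M$. Since $k\ge 3$, the superlinear dissipation $-C_2 M^{k-1}$ dominates the linear injection $\|\xi_\epsilon\|_{L^\infty}M$, producing a differential inequality $\tfrac{d}{dt}M\lesssim C - cM^{k-1}$ whose solutions stay bounded on $[0,\infty)$; applying the same reasoning to $-u$ controls $\min_x u$. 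The identical conclusion follows from the energy identity for $\tfrac12\tfrac{d}{dt}\|u\|_{L^2}^2$, in which $\langle u,\Delta u\rangle\le 0$ and $\int u^2\xi_\epsilon\le\|\xi_\epsilon\|_{L^\infty}\|u\|_{L^2}^2$ are absorbed by $-C_2\|u\|_{L^k}^k$ via the embedding $L^k(\mathbb{T}^2)\hookrightarrow L^2(\mathbb{T}^2)$ and Young's inequality. Either route delivers $\sup_{t\ge 0}\|u(t)\|_{L^\infty}<\infty$, forcing $T^{*}=\infty$.

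The main obstacle is the sign-indefinite potential term $u\xi_\epsilon$: unlike a pure damping, it can pump energy into the system, so neither the maximum principle nor the energy estimate closes by itself. What saves the argument is solely the strength of the nonlinearity—the hypothesis $k\ge 3$ ensures that the coercive dissipation $-C_2|u|^k$ beats the at-most-quadratic growth of the noise contribution, so the a priori bound persists. I emphasize that $\|\xi_\epsilon\|_{L^\infty}$ and $C_\epsilon$ are finite for each fixed $\epsilon$, so the bound obtained here is permitted to depend on $\epsilon$; no $\epsilon$-uniformity is asserted at this stage, which is precisely what makes the classical deterministic argument sufficient.
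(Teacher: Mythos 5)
Your proposal is correct, but it reaches the conclusion by a different route than the paper. You observe that for fixed $\epsilon$ the renormalized product is just $u\xi_\epsilon-C_\epsilon u$ with a finite constant, reduce to a classical semilinear heat equation, and then run a Banach fixed-point argument for the mild (Duhamel) formulation in $C([0,T_0];C(\mathbb{T}^2))$, upgrade to $C^\infty$ by Schauder bootstrapping, and rule out blow-up with a pointwise maximum-principle bound in which the dissipation $-C_2|s|^k$ from (\ref{assumf}) absorbs the linear injection $\|\xi_\epsilon\|_{L^\infty}M$. The paper instead obtains existence directly in the energy framework: it invokes monotonicity arguments in the Gelfand triplet $[H^1\cap L^k]\hookrightarrow L^2\hookrightarrow[H^1\cap L^k]^{\ast}$ to produce a global weak solution, then tests with $u^{p-1}$ and applies Gronwall to get $u\in L^\infty_T L^p$ for all $p$, and finally bootstraps through the H\"older and Schauder regularity results of Debussche--de Moor--Hofmanov\'a to reach $C^\infty$, closing with the same weighted maximum-principle argument as Lemma \ref{coercive}. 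Your approach is more elementary and self-contained (no appeal to monotone-operator theory or to external quasilinear regularity theorems), at the price of having to set up the local-existence/continuation machinery explicitly; the paper's route gets global existence in one stroke from the variational structure but must then work harder to recover classical regularity. Two small points to tidy: the sign bookkeeping for $-C_\epsilon M$ at a maximum is harmless but should be stated as a bound by $(\|\xi_\epsilon\|_{L^\infty}+C_\epsilon)|M|$; and your alternative energy identity only yields an $L^2$ bound, not the $L^\infty$ bound needed for continuation, so the maximum-principle argument should be regarded as the essential one (or supplemented by the $L^p$ iteration the paper uses).
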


\begin{proof}
Note that $u_0 \in L^2(\mathbb{T}^2)$ and $\xi^{\epsilon}\in C^{\infty}(\mathbb{T}^2)$ is bounded in $[0,T]\times\mathbb{T}^2$. By monotonicity arguments with the Gelfand triplet(see e.g. Theorem 5.8 in [\refcite{J2001}]) 
\begin{equation*}
    [H^1(\mathbb{T}^2)\cap L^k(\mathbb{T}^2)] \hookrightarrow L^2(\mathbb{T}^2) \hookrightarrow [H^1(\mathbb{T}^2)\cap L^k(\mathbb{T}^2)]^{\ast},
\end{equation*}
equation (\ref{appPAM}) has a unique solution $u \in C_T L^2(\mathbb{T}^2)\cap L^2_T H^1(\mathbb{T}^2) \cap L^p_T L^k({\mathbb{T}^2})$. By Sobolev embedding $H^1(\mathbb{T}^2)\hookrightarrow L^p(\mathbb{T}^2)$ for every $p\in [2,\infty)$, we have $u(t)\in L^p(\mathbb{T}^2)$ for every $p\in [2,\infty)$ and $t\in [0,T]$ almost surely. After multiply $u^{p-1}$, $p\in [2,\infty)$ on both sides of the equation (\ref{appPAM}), we obtain
\begin{align*}
    & \frac{1}{p}\partial_t \int_{\mathbb{T}^2} |u|^{p}dx + (p-1)\int_{\mathbb{T}^2} |u|^{p-2}|\nabla u|^2dx  - \int_{\mathbb{T}^2} (c_0|u|^{p-2}-c_2|u|^{p+k-2})dx \\
    \leq & |\xi_{\epsilon}-C_{\epsilon}|_{ L^{\infty}(\mathbb{T}^2)}\int_{\mathbb{T}^2}  |u|^{p}dx- \mu\int_{\mathbb{T}^2} |u|^{p}dx.
\end{align*}
Then the Gronwall Lemma implies that $u\in L^{\infty}_{T}L^p(\mathbb{T}^2)$ for every $p\in [2,\infty)$. Thus by assumption (\ref{assumf}) and interpolation, we have that $F(u)+ u\diamond\xi_{\epsilon}\in L^{\infty}_{T}L^p(\mathbb{T}^2)$ for every $p\in [1,\infty)$. Applying a classical regularity result (see e.g. Theorem 3.2 in [\refcite{DdMH2015}]), we obtain that there exists $\alpha \in (0,1)$ and $p\in [1,\infty)$ such that
\begin{equation*}
    \|u\|_{C^{\alpha/2,\alpha}([0,T]\times\mathbb{T}^2)} \lesssim \|u_0\|_{\mathscr{C}^{\alpha}} +  (\mu+|\xi_{\epsilon}|_{L^{\infty}(\mathbb{T}^2)})\|u\|_{L^{\infty}_{T}L^p(\mathbb{T}^2)}+ \|F(u)\|_{L^{p}_{T}L^p(\mathbb{T}^2)}.
\end{equation*}
Moreover, since $F(u)\in C^{\alpha/2,\alpha}([0,T]\times\mathbb{T}^2)$, by Schauder estimates (see e.g. Theorem 3.4 in [\refcite{DdMH2015}]), we have
\begin{equation*}
    \|u\|_{C^{(\alpha+2)/2,\alpha+2}([0,T]\times\mathbb{T}^2)} \lesssim \|u_0\|_{\mathscr{C}^{\alpha+2}} +  (\mu+|\xi_{\epsilon}|_{L^{\infty}(\mathbb{T}^2)})\|u\|_{L^{\infty}_{T}L^p(\mathbb{T}^2)}+ \|F(u)\|_{C^{\alpha/2,\alpha}([0,T]\times\mathbb{T}^2)}.
\end{equation*}
Since $\xi^{\epsilon}\in C^{\infty}([0,T]\times\mathbb{T}^2)$ and $u_0 \in C^{\infty}(\mathbb{T}^2)$, we apply the regularity result 
 from Theorem 3.4 in [\refcite{DdMH2015}] repeatedly, and conclude that $u\in C^{\infty}([0,T]\times\mathbb{T}^2)$. Now we applied same argument as in parabolic coercive estimates Lemma \ref{coercive}, and then sending $T\rightarrow \infty$. This completes the proof.
\end{proof}

\section{Global well-posedness}

In this section, we consider the global existence and uniqueness of the following nonlinear parabolic Anderson model equation
\begin{equation*}
    \partial_t u + \mathscr{L}u = f(u)+ u \diamond \xi, \quad u(0)=u_0,
\end{equation*}
where $f$ is a continuous function from $\mathbb{R}$ to $\mathbb{R}$, and $\xi$ is a spatial white noise on the $2$-dimension torus $\mathbb{T}^2=(\mathbb{R}/\mathbb{Z})^2$.

Now we define $u\diamond\xi$ by above the renormalization argument of singular term $\vartheta\diamond\xi$ and paracontrolled distributions. If $u\in C_{\rho'}\mathscr{C}^{\alpha}$ is paracontrolled by $\vartheta$: $u^{\sharp}:= u- u\pprec\vartheta \in C_{\rho'}\mathscr{C}^{2\alpha} $ and define $u\diamond\xi$ as following
\begin{align*}
    & u\diamond\xi \\
    = & u\prec \xi + u\succ \xi+ u\circ\xi \\
    = & u\prec \xi + u\succ \xi +(u\pprec\vartheta)\circ\xi + u^{\sharp}\circ\xi  \\
              = & u\prec \xi + u\succ \xi + (u\pprec\vartheta - u\prec \vartheta)\circ\xi + C(u,\vartheta,\xi) +u (\vartheta \diamond \xi) +u^{\sharp}\circ\xi \\
               = & \lim_{\epsilon\rightarrow 0}(u\prec \xi_{\epsilon} + u\succ \xi_{\epsilon} + (u\pprec\vartheta_{\epsilon} - u\prec \vartheta_{\epsilon})\circ\xi_{\epsilon} + C(u,\vartheta_{\epsilon},\xi_{\epsilon}) + u (\vartheta_{\epsilon} \circ \xi_{\epsilon}-C_{\epsilon}) +u^{\sharp}\circ\xi_{\epsilon}).
\end{align*}
Thus the singular term $u\diamond\xi$ can be formally written as $u\diamond\xi =\lim_{\epsilon\rightarrow 0}u\xi_{\epsilon}-C_{\epsilon}u= u\xi - \infty\cdot u$.

We introduction the ansatz $u = \psi+\phi$. Then the original equation $\ref{GPAM}$ can be decomposed into a simple system
\begin{equation}\label{decGPAM}
  \left\{
   \begin{aligned}
   & \partial_t\phi + \mathscr{L}\phi = \Phi, \quad \phi(0) = \phi_0=u_0,\\
   & \partial_t\psi + \mathscr{L}\psi = f(\psi)+ \Psi, \quad \psi(0)=0,
   \end{aligned}
   \right.
\end{equation}
where $\Phi$ is the collection of all terms of negative regularity, and $\Psi$ the collection of all the others regular term (belonging to $L^{\infty}$).

Recall that the stochastic terms $\xi$ and $\vartheta\diamond\xi$ can be constructed such that
\begin{equation*}
    \|\xi\|_{\mathscr{C}^{-1-\kappa}} \lesssim 1, \quad \|\vartheta\diamond\xi\|_{\mathscr{C}^{-2\kappa}} \lesssim 1.
\end{equation*}
We choose small parameters $\kappa \in (0, 1- \alpha)$, and employ the Localization operators $\mathscr{U}_{\leq}$ and $\mathscr{U}_{>}$ to decompose
\begin{equation*}
    \xi =  \mathscr{U}_{\leq} \xi + \mathscr{U}_{>} \xi,  \quad  \vartheta\diamond\xi =  \mathscr{U}_{\leq} (\vartheta\diamond\xi) + \mathscr{U}_{>} (\vartheta\diamond\xi).
\end{equation*}
Here $\mathscr{U}_{\leq} \xi$, $\mathscr{U}_{\leq} (\vartheta\diamond\xi)$ are regular, and $\mathscr{U}_{>} \xi$, $\mathscr{U}_{>} (\vartheta\diamond\xi)$ are irregular. Then the singular term $u\diamond\xi := (\psi+\phi) \diamond \xi$ can be decomposed as
\begin{align*}
     & (\psi+\phi) \diamond \xi \\
    = & (\psi+\phi)\succ \xi + (\psi+\phi)\prec \xi  + (\psi+\phi) \circ \xi\\
    = & (\psi+\phi) \succ \mathscr{U}_{\leq} \xi + (\psi+\phi) \succ \mathscr{U}_{>} \xi + (\psi+\phi) \prec  \mathscr{U}_{\leq} \xi + (\psi+\phi) \prec \mathscr{U}_{>} \xi+ (\psi+\phi) \circ \xi.
\end{align*}
In order to define the resonant term $(\psi+\phi) \circ \xi$, we also need the modified paraproduct ansatz
\begin{equation*}
    \tau^{\gamma}\phi^{\sharp} = \tau^{\gamma}\phi-[\tau^{\gamma}(\psi+ \phi)\pprec \vartheta],
\end{equation*}
where $\phi^{\sharp}(t) \in \mathscr{C}^{2\alpha}$, and the modified paraproduct $\pprec$ is defined as (\ref{modifiedpara}). Then the resonant term can be defined as
\begin{align*}
    (\psi+\phi) \circ \xi  = & \psi\circ\xi +((\psi+\phi)\pprec\vartheta)\circ \xi + \phi^{\sharp} \circ\xi  \\
              = & \psi \circ \xi+ ((\psi+\phi)\pprec\vartheta - (\psi+\phi)\prec \vartheta)\circ\xi + C(\psi+\phi,\vartheta,\xi) +\phi^{\sharp}\circ\xi \\
              & +(\psi+\phi) \succ (\vartheta\diamond\xi) +(\psi+\phi)\circ (\vartheta\diamond\xi)+ (\psi+\phi)  \prec (\vartheta\diamond\xi).
\end{align*}
Now we define
\begin{align*}
    \Phi := &  (\psi+\phi) \prec \mathscr{U}_{>} \xi + (\psi+\phi) \succ \mathscr{U}_{>} \xi + (\psi+\phi) \succ \mathscr{U}_{>}(\vartheta\diamond\xi) - (\psi+\phi) \prec \mathscr{U}_{>}(\vartheta\diamond\xi), \\
    \Psi := & f(\psi+\phi) - f(\psi) + ((\psi+\phi)\pprec\vartheta - (\psi+\phi)\prec \vartheta)\circ\xi + \phi^{\sharp}\circ\xi \\
            & + C(\psi+\phi,\vartheta,\xi)+ (\vartheta\diamond\xi) \circ (\psi+\phi)  \\
            & + (\psi+\phi) \prec \mathscr{U}_{\leq} \xi + (\psi+\phi) \succ \mathscr{U}_{\leq}\xi +(\psi+\phi) \succ \mathscr{U}_{\leq}(\vartheta\diamond\xi) - (\psi+\phi) \prec \mathscr{U}_{\leq}(\vartheta\diamond\xi) .
\end{align*}

\subsection{A priori estimates}
{\bf Step 1. Bound for $\phi$ in $C_{\tau^{1/(k-2)+\kappa/2}}\mathscr{C}^{\kappa} \cap C_{\tau^{1/(k-2)}}L^{\infty}$ }\\
First, we estimate $\Phi$ in $C_{\tau^{1/(k-2)}}\mathscr{C}^{-2+\kappa}$, and derive a bound for $\phi$ in $C_{\tau^{1/(k-2)+\kappa/2}}\mathscr{C}^{\kappa} \cap C_{\tau^{1/(k-2)}}L^{\infty}$ by Schauder estimates.
By Lemma \ref{Localization}, we employ the Localization operators $\mathscr{U}_{\leq}$ and $\mathscr{U}_{>}$ with the parameter $L$ such that
\begin{equation*}
    \|\mathscr{U}_{>} \xi\|_{\mathscr{C}^{ -2+\kappa }} \lesssim 2^{-(1-2\kappa)L}\|\xi\|_{\mathscr{C}^{-1-\kappa}},
\end{equation*}
Then by Bony's paraproduct estimate, we have
\begin{align}
    & \|(\psi+\phi) \prec \mathscr{U}_{>} \xi\|_{C_{\tau^{1/(k-2)}}\mathscr{C}^{-2+\kappa}} + \|(\psi+\phi) \succ \mathscr{U}_{>} \xi\|_{C_{\tau^{1/(k-2)}}\mathscr{C}^{-2+\kappa}}\nonumber \\
   \lesssim   &  \|\mathscr{U}_{>} \xi\|_{\mathscr{C}^{-2+\kappa }} \|\psi+\phi\|_{C_{\tau^{1/(k-2)}}L^{\infty}} \nonumber \\
     \lesssim & 2^{-(1-2\kappa)L}\|\xi\|_{\mathscr{C}^{-1-\kappa}} \|\psi+\phi\|_{C_{\tau^{1/(k-2)}}L^{\infty}}.
\end{align}
Similarly, we employ the Localization operators $\mathscr{U}_{\leq}$ and $\mathscr{U}_{>}$ with the parameter $K$ such that
\begin{equation*}
    \|\mathscr{U}_{>} (\vartheta\diamond\xi)\|_{\mathscr{C}^{ -2+\kappa }} \lesssim 2^{-(2-3\kappa)K}\|\vartheta\diamond\xi\|_{\mathscr{C}^{-2\kappa}},
\end{equation*}
Then
\begin{align}
    & \|(\psi+\phi) \prec \mathscr{U}_{>} (\vartheta\diamond\xi)\|_{C_{\tau^{1/(k-2)}}\mathscr{C}^{-2+\kappa}} + \|(\psi+\phi) \succ \mathscr{U}_{>} (\vartheta\diamond\xi)\|_{C_{\tau^{1/(k-2)}}\mathscr{C}^{-2+\kappa}}\nonumber \\
   \lesssim   &  \|\mathscr{U}_{>} (\vartheta\diamond\xi)\|_{\mathscr{C}^{-2\kappa }} \|\psi+\phi\|_{C_{\tau^{1/(k-2)}}L^{\infty}} \nonumber \\
     \lesssim & 2^{-(2-3\kappa)K}\|\vartheta\diamond\xi\|_{\mathscr{C}^{-2\kappa}} \|\psi+\phi\|_{C_{\tau^{1/(k-2)}}L^{\infty}}.
\end{align}
Note that the stochastic terms $\xi$ and $\vartheta\diamond\xi$ can be constructed such that
\begin{equation*}
    \|\xi\|_{\mathscr{C}^{-1-\kappa}} \lesssim 1, \quad \|\vartheta\diamond\xi\|_{\mathscr{C}^{-2\kappa}} \lesssim 1.
\end{equation*}
Now we choose $L,K>1$, such that
\begin{equation*}
   1+ \|\psi + \phi\|_{C_{\tau^{1/(k-2)}}L^{\infty}} = 2^{(1 -\kappa)L} = 2^{(2-3\kappa)K}.
\end{equation*}
Then we have
\begin{equation}\label{EPhi0}
    \|\Phi\|_{C_{\tau^{1/(k-2)}}\mathscr{C}^{-2+\kappa}} \lesssim (2^{-(1 -\kappa)L}+2^{-(2-3\kappa)K})\| \psi +  \phi \|_{C_{\tau^{1/(k-2)}}L^{\infty}} \lesssim 1.
\end{equation}
Since
\begin{equation*}
    \mathscr{L}(\tau^{1/(k-2)+\kappa/2}\phi) = (\partial_t )\tau^{1/(k-2)+\kappa/2} \phi + \tau^{1/(k-2)-1+\kappa/2}(\partial_t \tau)\phi + \tau^{1/(k-2)+\kappa/2} \Phi,
\end{equation*}
by the Schauder estimates, we have the bound for $\phi$,
\begin{align*}
    \|\tau^{\kappa/2}\phi\|_{C_{\tau^{1/(k-2)}} \mathscr{C}^{\kappa}} \lesssim & \|\Phi\|_{C_{\tau^{1/(k-2)}}\mathscr{C}^{-2+\kappa}} + \|\tau^{1/(k-2)-\kappa/2}(\tau^{1+\kappa/2} \phi + \tau^{\kappa/2}(\partial_t \tau)\phi)\|_{CL^{\infty}} \\
    \lesssim & \|\Phi\|_{C_{\tau^{1/(k-2)}}\mathscr{C}^{-2+\kappa}} + \|\phi \|_{C_{\tau^{1/(k-2)}}L^{\infty}},
\end{align*}
Now we estimate $\phi$ in $C_{\tau^{1/(k-2)}}L^{\infty}$. Since $\tau^{1/(k-2)+\kappa/2}< \tau^{1/(k-2)}$, we can not control $\|\phi \|_{C_{\tau^{1/(k-2)}}L^{\infty}}$ by $\|\phi\|_{C_{\tau^{1/(k-2)+\kappa/2} }\mathscr{C}^{\kappa}}$ directly. By Littlewood-Paley decomposition and Duhamel's formula, for some $t\in (0,1)$ and $i \in \mathbb{N}$ we have
\begin{align*}
      & \|\tau(t)^{1/(k-2)}\phi(t) \|_{L^{\infty}} \nonumber \\
    \lesssim & \tau(t)^{1/(k-2)}\|\Delta_{\leq i}\phi \|_{L^{\infty}} + \tau(t)^{1/(k-2)}\|\Delta_{>i}\phi(t) \|_{L^{\infty}} \nonumber \\
     \lesssim & \tau(t)^{1/(k-2)}\|P_t \Delta_{\leq i}\phi(0)\|_{L^{\infty}} + \tau(t)^{1/(k-2)}\int^t_0 \|P_{t-s}\Delta_{\leq i}\Phi(s)\|_{L^{\infty}}ds + \tau(t)^{1/(k-2)}\|\Delta_{>i}\phi(t) \|_{L^{\infty}} \nonumber \\
     \lesssim & \tau(t)^{1/(k-2)}2^{i}\|\phi(0)\|_{\mathscr{C}^{-1}} + \tau(t)^{1/(k-2)} 2^{(2-\kappa)i}\|\Phi\|_{C\mathscr{C}^{-2+\kappa}} + 2^{-\kappa i}\tau(t)^{1/(k-2)}\|\phi(t)\|_{\mathscr{C}^{\kappa}}.
\end{align*}
We fix $t\in(0,1)$ and choose $i \in \mathbb{N}$ be such that $2^{-\kappa i} = \lambda\tau(t)^{\kappa/2}$ for any $\lambda>0$ which is independent on time. Then we have
\begin{equation*}
    \|\tau(t)^{1/(k-2)}\phi(t) \|_{L^{\infty}} \lesssim \tau(t)^{1/2}\tau(t)^{1/(k-2)}\|\phi(0)\|_{\mathscr{C}^{-1}} + \tau(t)^{1/(k-2)-\kappa/2}\|\Phi\|_{C\mathscr{C}^{-2+\kappa}} + \lambda \tau(t)^{1/(k-2)}\|\phi(t)\|_{\mathscr{C}^{\kappa}}.
\end{equation*}
Taking supremum in time, we obtain
\begin{equation}
    \|\phi\|_{C_{\tau^{1/(k-2)}}L^{\infty}} \lesssim \|\phi(0)\|_{\mathscr{C}^{-1}} + \|\Phi\|_{C_{\tau^{1/(k-2)}}\mathscr{C}^{-2+\kappa}} + \lambda \|\phi\|_{C_{\tau^{1/(k-2)+\kappa/2}}\mathscr{C}^{\kappa}}.
\end{equation}
Choosing $\lambda$ is small enough, we can absorb $\lambda \|\phi\|_{C_{\tau^{1/(k-2)+\kappa/2} }\mathscr{C}^{\kappa}}$ into the left hand side and obtain
\begin{equation}\label{Ephi1}
    \|\phi\|_{C_{\tau^{1/(k-2)+\kappa/2}}\mathscr{C}^{\kappa}} + \|\phi\|_{C_{\tau^{1/(k-2)}}L^{\infty}} \lesssim 1,
\end{equation}
where the right hand side is uniform in the initial condition $\|u_0\|_{\mathscr{C}^{-1}}$. We fix the parameters $L$ and $K$ in the remain part. We also have
\begin{equation}\label{Lep}
    2^{(1 -\kappa)L}= 2^{(2 - 3\kappa)K} = 1+\|
    (\psi + \phi)\|_{C_{\tau^{1/(k-2)}} L^{\infty}} \lesssim 1+\|\psi\|_{C_{\tau^{1/(k-2)}} L^{\infty}}.
\end{equation}
{\bf Step 2. Bound for $\phi$ in $C_{\tau^{1/(k-2)+\alpha/2}}\mathscr{C}^{\alpha} \cap C^{\alpha/2}_{\tau^{1/(k-2)+\alpha/2}}L^{\infty}$ }\\
First, we estimate $\Phi$ in $C_{\tau^{1/(k-2)}}\mathscr{C}^{-2+\alpha}$, and derive a bound for $\phi$ in $C_{\tau^{1/(k-2)+\alpha/2}}\mathscr{C}^{\alpha} \cap C^{\alpha/2}_{\tau^{1/(k-2)+\alpha/2}}L^{\infty}$ by Schauder estimates.
By Bony's paraproduct estimate, we have
\begin{align}
    & \|(\psi+\phi) \prec \mathscr{U}_{>} \xi + (\psi+\phi) \succ \mathscr{U}_{>} \xi\|_{C_{\tau^{1/(k-2)}}\mathscr{C}^{\alpha-2}} \nonumber \\
   \lesssim   &  \|\mathscr{U}_{>}\xi\|_{\mathscr{C}^{\alpha-2}} \|\psi+\phi\|_{C_{\tau^{1/(k-2)}}L^{\infty}} \nonumber \\
     \lesssim &  2^{-(1-\kappa-\alpha)L}\|\mathscr{U}_{>}\xi\|_{\mathscr{C}^{-1-\kappa}}\|\psi+\phi\|_{C_{\tau^{1/(k-2)}}L^{\infty}} \nonumber \\
     \lesssim & 1+\|\psi\|_{C_{\tau^{1/(k-2)}} L^{\infty}},
\end{align}
and
\begin{align}
    & \|(\psi+\phi) \succ \mathscr{U}_{>}(\vartheta\circ\xi) + (\psi+\phi) \prec \mathscr{U}_{>}(\vartheta\circ\xi)\|_{C_{\tau^{1/(k-2)}}\mathscr{C}^{\alpha-2}}  \nonumber \\
   \lesssim   &  \| \mathscr{U}_{>}(\vartheta\diamond\xi)\|_{\mathscr{C}^{\alpha-2 }} \|\psi+\phi\|_{C_{\tau^{1/(k-2)}}L^{\infty}} \nonumber \\
     \lesssim &  2^{-(2-2\kappa -\alpha)K}\|\mathscr{U}_{>}(\vartheta\circ\xi)\|_{\mathscr{C}^{-2\kappa}}\|\psi+\phi\|_{C_{\tau^{1/(k-2)}}L^{\infty}} \nonumber \\
      \lesssim & 1+\|\psi\|_{C_{\tau^{1/(k-2)}} L^{\infty}}.
\end{align}
Then we have
\begin{equation}\label{EPhi1}
    \|\Phi\|_{C_{\tau^{1/(k-2)}}\mathscr{C}^{\alpha-2}}  \lesssim 1 + \|\psi\|_{C_{\tau^{1/(k-2)}} L^{\infty}}.
\end{equation}

Now we estimate $\phi$ in $C_{\tau^{1/(k-2)+\alpha/2}}\mathscr{C}^{\alpha} \cap C^{\alpha/2}_{\tau^{1/(k-2)+\alpha/2}}L^{\infty}$ by Schauder estimates.
Since $\rho=\tau^{1/(k-2)}\eta$, we have
\begin{equation*}
    (\partial_t+\mathscr{L})(\tau^{1/(k-2)+\alpha/2} \phi) = (\partial_t \eta)\tau^{1/(k-2)+\alpha/2} \phi + \rho\tau^{-(2-\alpha)/2}(\partial_t \tau)\phi + \tau^{1/(k-2)+\alpha/2}  \Phi,
\end{equation*}
Then by the Schauder estimates, we obtain the $C_{\tau^{1/(k-2)+\alpha/2}}\mathscr{C}^{\alpha}$ bound for $\phi$,
\begin{align*}
    &\|\phi\|_{C_{\tau^{1/(k-2)+\alpha/2} }\mathscr{C}^{\alpha}} \\
    \lesssim & \|\Phi\|_{C_{\tau^{1/(k-2)+\alpha/2}}\mathscr{C}^{\alpha-2}} + \|\tau^{(2-\alpha)/2} (\tau^{1/(k-2)+\alpha/2} \phi + \tau^{1/(k-2)-(2-\alpha)/2} \phi)\|_{C_{\tau^{1/(k-2)}}L^{\infty}} \\
    \lesssim & \|\Phi\|_{C_{\rho}\mathscr{C}^{\alpha-2}} + \|\phi \|_{C_{\tau^{1/(k-2)}}L^{\infty}} \\
    \lesssim & 1 + \|\psi\|_{C_{\tau^{1/(k-2)}}L^{\infty}}.
\end{align*}
Using Lemma \ref{tauSch}, we have the time regularity
\begin{align*}
    & \|\phi\|_{C^{\alpha/2}_{\tau^{1/(k-2)+\alpha/2}}L^{\infty}} \\
    \lesssim & \|\phi\|_{C_{\tau^{1/(k-2)+\alpha/2}}\mathscr{C}^{\alpha}} + \|\Phi\|_{C_{\rho}\mathscr{C}^{\alpha-2}}  + \|\tau^{1/2} (\tau^{1/(k-2)+1/2} \phi + \tau^{1/(k-2)-1/2} \phi)\|_{C_{\eta}C^{\alpha-1}} \\
    \lesssim & 1 + \|\psi\|_{C_{\tau^{1/(k-2)}}L^{\infty}}.
\end{align*}
Thus we have a bound for $\phi$ in $C_{\tau^{1/(k-2)+\alpha/2}}\mathscr{C}^{\alpha} \cap C^{\alpha/2}_{\tau^{1/(k-2)+\alpha/2}}L^{\infty}$
\begin{equation}\label{phicalpha}
    \|\phi\|_{C_{\tau^{1/(k-2)+\alpha/2}}\mathscr{C}^{\alpha}} + \|\phi\|_{C^{\alpha/2}_{\tau^{1/(k-2)+\alpha/2}}L^{\infty}} \lesssim 1 + \|\psi\|_{C_{\tau^{1/(k-2)}}L^{\infty}}.
\end{equation}
{\bf Step 3. Bound for $\phi^{\sharp}$ in $C_{\rho}\mathscr{C}^{2\alpha}\cap C^{\alpha}_{\rho}L^{\infty}$}\\
Now we derive a bound for $\phi^{\sharp}$ in $C_{\rho}\mathscr{C}^{2\alpha}\cap C^{\alpha}_{\rho}L^{\infty}$. Recall that we denote $\rho =\tau^{1+1/(k-2)+(3\alpha-2)/2}$. Since $\phi^{\sharp}$ is given by
\begin{equation*}
    \phi^{\sharp} = \phi-\rho^{-1}\left([\rho(\psi+ \phi)]\pprec \vartheta \right),
\end{equation*}
the remainder $\phi^{\sharp}$ satisfies
\begin{align}\label{usharpg}
      (\partial_t+\mathscr{L})\phi^{\sharp} & =  :- (\partial_t+\mathscr{L})\left(\rho^{-1}[\rho(\psi+ \phi)\pprec \vartheta]\right) + \Phi  \nonumber \\
                   = & - [(\partial_t+\mathscr{L})(\rho^{-1}[\rho(\psi+ \phi)]\pprec\vartheta)- \rho^{-1}[\rho(\psi+ \phi)]\pprec (\partial_t+\mathscr{L})\vartheta] \nonumber \\
                    & + [ (\psi+\phi)\prec \xi-\rho^{-1}[\rho(\psi+ \phi)]\pprec (\partial_t+\mathscr{L})\vartheta ] - (\psi+\phi)\prec\xi + \Phi \nonumber \\
                   = & \left(\frac{k-1}{k-2}+\frac{3\alpha-2}{2}\right)(\tau^{-(\frac{k-1}{k-2}+\frac{3\alpha-2}{2})-1}(1-\tau)[\rho(\psi+ \phi)]\pprec\vartheta) \nonumber \\
                    & - \rho^{-1} \left( (\partial_t+\mathscr{L})[\rho(\psi+ \phi)]\pprec\vartheta)- [\rho(\psi+ \phi)]\pprec (\partial_t+\mathscr{L})\vartheta \right) \nonumber \\
                    & + \rho^{-1}\left([\rho(\psi+\phi)]\prec \xi-[\rho(\psi+ \phi)]\pprec \xi \right) \nonumber \\
                    & - (\psi+\phi)\prec\xi + \Phi.
\end{align}
Since $\vartheta = (-\Delta- \mu)^{-1}\xi$, the Schauder estimates yields that $\|\vartheta\|_{\mathscr{C}^{\alpha}} \lesssim \|\xi\|_{\mathscr{C}^{\alpha-2}} \lesssim 1$. Thus by Lemma \ref{pprec} we have
\begin{align}\label{Es1}
  & \|\tau^{(2-\alpha)/2}(\tau^{-(\frac{k-1}{k-2}+\frac{3\alpha-2}{2})-1}(1-\tau)[\rho(\psi+ \phi)]\pprec\vartheta)  \|_{C_{\rho}\mathscr{C}^{\alpha}} \nonumber \\
  \lesssim & \|\tau^{-1/2}[\rho(\psi+ \phi)]\pprec\vartheta \|_{C\mathscr{C}^{\alpha}} \nonumber \\
  \lesssim & \|(\psi+ \phi) \|_{C_{\tau^{1/(k-2)}}L^{\infty}}\|\vartheta\|_{\mathscr{C}^{\alpha}} \nonumber \\
  \lesssim & 1+ \|\psi\|_{C_{\tau^{1/(k-2)}}L^{\infty}}.
\end{align}
Lemma \ref{pprec} implies that
\begin{align}\label{Es2}
     & \|\rho^{-1} \left( (\partial_t+\mathscr{L})[\rho(\psi+ \phi)]\pprec\vartheta - [\rho(\psi+ \phi)]\pprec (\partial_t+\mathscr{L})\vartheta \right)\|_{C_{\rho}\mathscr{C}^{2\alpha-2}} \nonumber \\
    \lesssim & \|(\psi+\phi)\|_{C_{\rho}\mathscr{C}^{\alpha} }+\|(\psi+\phi)\|_{C^{\alpha/2}_{\rho}L^{\infty}},
\end{align}
and
\begin{align}\label{Es3}
     & \|\rho^{-1}\left([\rho(\psi+\phi)]\prec \xi-[\rho(\psi+ \phi)]\pprec \xi \right)\|_{C_{\rho}\mathscr{C}^{2\alpha}}  \nonumber \\
    \lesssim & \|\left([\rho(\psi+\phi)]\prec \xi-[\rho(\psi+ \phi)]\pprec \xi \right)\|_{C\mathscr{C}^{2\alpha-2}}  \nonumber \\
    \lesssim & \|\psi+\phi \|_{C_{\rho}\mathscr{C}^{\alpha} }+\| \psi+\phi \|_{C^{\alpha/2}_{\rho}L^{\infty}}.
\end{align}
Then by paraproduct estimates, we have
\begin{align}\label{Es4}
       & \|-(\psi+\phi)\prec\xi+\Phi\|_{C_{\rho}\mathscr{C}^{2\alpha -2} } \nonumber \\
    \lesssim & \|(\psi+\phi) \succ \mathscr{U}_{>} \xi + (\psi+\phi) \succ \mathscr{U}_{>}(\vartheta\circ\xi)\|_{C_{\rho}\mathscr{C}^{2\alpha -2} } \nonumber \\
    & +\|(\psi+\phi) \prec \mathscr{U}_{\leq} \xi  - (\psi+\phi) \prec \mathscr{U}_{>}(\vartheta\circ\xi)\|_{C_{\rho}\mathscr{C}^{2\alpha -2} } \nonumber \\
    \lesssim & (\| \mathscr{U}_{>} \xi\|_{\mathscr{C}^{\alpha-2}} + \| \mathscr{U}_{>} (\vartheta\circ\xi)\|_{\mathscr{C}^{\alpha-2}})\|(\psi+\phi)\|_{C_{\rho}\mathscr{C}^{\alpha} }   \nonumber \\
    &  +(\| \mathscr{U}_{\leq} \xi\|_{\mathscr{C}^{2\alpha-2}}+ \| \mathscr{U}_{>} (\vartheta\circ\xi)\|_{\mathscr{C}^{2\alpha-2}})\|(\psi+ \phi)\|_{C_{\tau^{1/(k-2)}}L^{\infty}} \nonumber \\
   \lesssim  & (2^{-(1-\kappa-\alpha)L}+2^{-(2-2\kappa-\alpha)K})\|(\psi+\phi)\|_{C_{\rho}\mathscr{C}^{\alpha} }  \nonumber \\
   & + (2^{(2\alpha-1+\kappa )L}+ 2^{-(2-2\kappa-2\alpha)K})\|(\psi+ \phi)\|_{C_{\tau^{1/(k-2)}}L^{\infty}} \nonumber \\
   \lesssim & \|(\psi+\phi)\|_{C_{\rho}\mathscr{C}^{\alpha} } + 1 + \|\psi \|^{1+\alpha}_{C_{\tau^{1/(k-2)}}L^{\infty}}.
\end{align}
Combining with above estimates (\ref{Es1})-(\ref{Es4}), and using the Schauder estimates, we have
\begin{align}\label{phisharp}
       &  \|\phi^{\sharp}\|_{C_{\rho}\mathscr{C}^{2\alpha}} + \|\phi^{\sharp}\|_{C^{\alpha}_{\rho}L^{\infty}}  \nonumber \\
    \lesssim & \|\tau^{(2-\alpha)/2}(\tau^{-(\frac{k-1}{k-2}+\frac{3\alpha-2}{2})-1}(1-\tau)[\rho(\psi+ \phi)]\pprec\vartheta)  \|_{C_{\rho}\mathscr{C}^{\alpha}} \nonumber \\
      & + \|\rho^{-1} \left( (\partial_t+\mathscr{L})[\rho(\psi+ \phi)]\pprec\vartheta - [\rho(\psi+ \phi)]\pprec (\partial_t+\mathscr{L})\vartheta \right)\|_{C_{\rho}\mathscr{C}^{2\alpha-2}} \nonumber \\
      & + \|\rho^{-1}\left([\rho(\psi+\phi)]\prec \xi-[\rho(\psi+ \phi)]\pprec \xi \right)\|_{C_{\rho}\mathscr{C}^{2\alpha-2}}  \nonumber \\
      & + \|-(\psi+\phi)\prec\xi+\Phi\|_{C_{\rho}\mathscr{C}^{2\alpha -2} } \nonumber \\
    \lesssim & \|\psi+\phi \|_{C_{\rho}\mathscr{C}^{\alpha} } + \|(\psi+ \phi)\|_{C_{\tau\eta}L^{\infty}} +\| \psi+\phi \|_{C^{\alpha/2}_{\rho}L^{\infty}}  \nonumber \\
    \lesssim & 1+\|\psi\|^{1+\alpha}_{C_{\tau^{1/(k-2)}}L^{\infty}} + \|\psi \|_{C_{\rho}\mathscr{C}^{\alpha} } +\|\psi\|_{C^{\alpha/2}_{\rho}L^{\infty}}.
\end{align}
{\bf Step 4. Bound for $\psi$ in $C_{\rho}\mathscr{C}^{3\alpha} \cap C^1_{\rho}L^{\infty}$}

Now we derive a bound for $\psi$ in  $C_{\rho}\mathscr{C}^{3\alpha} \cap C^1_{\rho}L^{\infty}$. Recall that we denote $\rho =\tau^{1+1/(k-2)+(3\alpha-2)/2}$.
By paraproduct estimates and a priori estimates (\ref{phicalpha}), (\ref{phisharp}), we have
\begin{align}\label{E41}
   \|\phi^{\sharp} \circ \xi\|_{C_{\rho}\mathscr{C}^{3\alpha -2}} \lesssim & \|\xi\|_{\mathscr{C}^{-1-\kappa}}\|\phi^{\sharp}\|_{C_{\rho}\mathscr{C}^{2\alpha}} \nonumber \\
    \lesssim & 1+\|\psi\|_{C_{\rho}\mathscr{C}^{\alpha}}  +\|\psi\|_{C^{\alpha/2}_{\rho}L^{\infty}}+ \|\psi\|_{C_{\tau^{1/(k-2)}}L^{\infty}}.
\end{align}
\begin{equation}\label{E42}
    \|\psi\circ\xi\|_{C_{\rho}\mathscr{C}^{3\alpha -2}}  \lesssim \|\psi\|_{C_{\rho}\mathscr{C}^{2\alpha }},
\end{equation}
\begin{align}\label{E43}
     \|\mathscr{U}_{\leq}(\vartheta\diamond\xi) \prec (\psi+\phi)\|_{{\mathscr{C}_{\rho} \mathscr{C}^{3\alpha -2}}}
    \lesssim & \|\vartheta\diamond\xi\|_{\mathscr{C}^{2\alpha -2}}\|\psi+\phi\|_{C_{\rho}\mathscr{C}^{\alpha}} \nonumber \\
    \lesssim &  1+\|\psi\|_{{C_{\rho} \mathscr{C}^{\alpha}}}+\|\psi\|_{C_{\tau^{1/(k-2)}}L^{\infty}}
\end{align}
\begin{align}\label{E44}
     \|(\vartheta\diamond\xi) \circ (\psi+\phi)\|_{{C_{\rho}\mathscr{C}^{3\alpha -2}}}
    \lesssim & \|\vartheta\diamond\xi\|_{\mathscr{C}^{2\alpha -2}}\|\psi+\phi\|_{C_{\rho}\mathscr{C}^{\alpha}} \nonumber \\
    \lesssim &  1+\|\psi\|_{{C_{\rho}\mathscr{C}^{\alpha}}}+\|\psi\|_{C_{\tau^{1/(k-2)}}L^{\infty}},
\end{align}
The commutator estimate Lemma \ref{commutatorE} implies that
\begin{align}
    \|C(\psi + \phi,\vartheta,\xi)\|_{{C_{\rho} \mathscr{C}^{3\alpha-2}}} \lesssim & \|\psi + \phi\|_{{C_{\rho} \mathscr{C}^{\alpha}}} \|\xi\|_{\alpha -2}\|\vartheta\|_{\alpha} \nonumber \\
    \lesssim & 1+\|\psi\|_{{C_{\rho} \mathscr{C}^{\alpha}}}+\|\psi\|_{C_{\tau^{1/(k-2)}}L^{\infty}}.
\end{align}
According to Lemma \ref{Localization} and the choosing of $L$ and $K$, we have
\begin{align}
     & \|(\psi+\phi)\prec \mathscr{U}_{\leq}(\vartheta\circ\xi) \|_{C_{\rho}\mathscr{C}^{3\alpha-2} }+ \|(\psi+\phi)\prec \mathscr{U}_{\leq} \xi \|_{C_{\rho}\mathscr{C}^{3\alpha-2} } \nonumber \\
     \lesssim & \|\psi + \phi\|_{C_{\tau^{1/(k-2)}}L^{\infty}}(\|\mathscr{U}_{\leq} (\vartheta\circ\xi)\|_{\mathscr{C}^{3\alpha-2}}+ \|\mathscr{U}_{\leq} \xi\|_{\mathscr{C}^{3\alpha-2}}) \nonumber \\
     \lesssim & 2^{(3\alpha-1+\kappa)L}\|\xi\|_{\mathscr{C}^{-1-\kappa}}\|\psi + \phi\|_{C_{\tau^{1/(k-2)}}L^{\infty}} + 2^{(3\alpha-2+2\kappa)K}\|\vartheta\circ\xi\|_{\mathscr{C}^{-2\kappa}}\|\psi + \phi\|_{C_{\tau^{1/(k-2)}}L^{\infty}} \nonumber \\
     \lesssim & 1+\|\psi\|^{3\alpha/(1-\kappa)}_{C_{\tau^{1/(k-2)}}L^{\infty}},
\end{align}
and
\begin{align}
       & \|(\psi+\phi) \succ \mathscr{U}_{\leq}(\vartheta\circ\xi)\|_{C_{\rho}\mathscr{C}^{3\alpha-2 }}+ \|(\psi+\phi) \succ \mathscr{U}_{\leq}\xi\|_{C_{\rho}\mathscr{C}^{3\alpha-2 }} \nonumber \\
        \lesssim & (\|\mathscr{U}_{\leq}(\vartheta\circ\xi)\|_{\mathscr{C}^{2\alpha-2}} + \|\mathscr{U}_{\leq}\xi\|_{\mathscr{C}^{2\alpha-2}}) \|\psi+ \phi\|_{{C_{\rho} \mathscr{C}^{\alpha}}} \nonumber\\
        \lesssim & (2^{(2\alpha-1+\kappa)L}+1)(1+\|\psi\|_{C_{\tau^{1/(k-2)}}L^{\infty}} +\|\psi\|_{{C_{\rho} \mathscr{C}^{\alpha}}}) \nonumber \\
           \lesssim  & (1+\|\psi\|^{-1+2\alpha/(1-\kappa)}_{C_{\tau^{1/(k-2)}}L^{\infty}})(1+\|\psi\|_{C_{\tau^{1/(k-2)}}L^{\infty}} +\|\psi\|_{{C_{\rho} \mathscr{C}^{\alpha}}}).
\end{align}
By (\ref{Ephi1}) and the dissipative assumption (\ref{assumf}) of $f$, we have
\begin{align}\label{E48}
    \|f(\psi+\phi)-f(\psi)\|_{C_{\rho}\mathscr{C}^{3\alpha-2}} & \lesssim \|f'(\psi+\phi)\|_{C_{\rho^{k-2}}L^{\infty}}\|\psi\|_{C_{\rho^{1+(k-2)(3\alpha-2)/2}}\mathscr{C}^{3\alpha-2}} \nonumber \\
    & \lesssim (1+\|\psi\|^{k-2}_{C_{\tau^{1/(k-2)}}L^{\infty}})\|\psi\|_{C_{\rho^{1+(k-2)(3\alpha-2)/2}}\mathscr{C}^{3\alpha-2}}.
\end{align}

Combining with above estimates, and using the interpolation result in Lemma \ref{interpolation} and weighted Young inequality, for every $\lambda>0$ we have
\begin{align}\label{Psi}
    &       \| \Psi \|_{C_{\rho}\mathscr{C}^{3\alpha -2}} \nonumber \\
    \lesssim &  1 +\|\psi\|_{C_{\rho}\mathscr{C}^{\alpha} }+ \|\psi\|_{C^{\alpha/2}_{\rho}L^{\infty} }+ \|\psi\|_{C_{\rho}\mathscr{C}^{2\alpha} }  +\|\psi\|^{\alpha-1+2\alpha/(1-\kappa)}_{C_{\tau^{1/(k-2)}}L^{\infty}} \nonumber\\
    & + \|\psi\|^{-1+2\alpha/(1-\kappa)}_{C_{\tau^{1/(k-2)}}L^{\infty}}\|\psi\|_{{C_{\rho} \mathscr{C}^{\alpha}}} + \|\psi\|^{k-2}_{C_{\tau^{1/(k-2)}}L^{\infty}}\|\psi\|_{C_{\rho^{1+(k-2)(3\alpha-2)/2}}\mathscr{C}^{3\alpha-2}} \nonumber\\
    \lesssim & 1 + \|\psi\|^{2/3}_{C_{\tau^{1/(k-2)}}L^{\infty}}\|\psi\|^{1/3}_{C_{\rho}\mathscr{C}^{3\alpha}}+ \|\psi\|^{1/2}_{C_{\tau^{1/(k-2)}}L^{\infty}}\|\psi\|^{1/2}_{C^{\alpha}_{\rho}L^{\infty}} + \|\psi\|^{1/3}_{C_{\tau^{1/(k-2)}}L^{\infty}}\|\psi\|^{2/3}_{C_{\rho}\mathscr{C}^{3\alpha}} \nonumber\\
            & +\|\psi\|^{\alpha-1+2\alpha/(1-\kappa)}_{C_{\tau^{1/(k-2)}}L^{\infty}} + \|\psi\|^{2/3-1+2\alpha/(1-\kappa)}_{C_{\tau^{1/(k-2)}}L^{\infty}}\|\psi\|^{ 1/3}_{C_{\rho}\mathscr{C}^{3\alpha}}  + \|\psi\|^{k-2+2/(3\alpha)}_{C_{\tau^{1/(k-2)}}L^{\infty}} \|\psi\|^{(3\alpha-2)/(3\alpha)}_{C_{\rho}\mathscr{C}^{3\alpha}} \nonumber \\
     \lesssim & 1+ \lambda \|\psi\|_{C_{\eta}\mathscr{C}^{3\alpha}} + \lambda\|\psi\|_{C^{\alpha}_{\eta}L^{\infty} } +\|\psi\|^{\alpha-1+2\alpha/(1-\kappa)}_{C_{\tau^{1/(k-2)}}L^{\infty}} \nonumber \\
     & +  \|\psi\|^{-1/2+3\alpha/(1-\kappa)}_{C_{\tau^{1/(k-2)}}L^{\infty}} +  \|\psi\|^{3\alpha(k-2)/2+1}_{C_{\tau^{1/(k-2)}}L^{\infty}}
\end{align}
Then by Schauder estimate Lemma \ref{nlSch} and choosing $\lambda$ small enough, we obtain
\begin{align}\label{psi}
& \|\psi\|_{C_{\rho}\mathscr{C}^{3\alpha}} + \|\psi\|_{C^1_{\rho}L^{\infty}} \nonumber \\
\lesssim & 1+ \| \Psi \|_{C_{\rho}\mathscr{C}^{3\alpha -2}} +\|\psi\|^{k+1}_{C_{\tau^{1/(k-2)}}L^{\infty}} \nonumber \\
\lesssim & 1 +\|\psi\|^{\alpha-1+2\alpha/(1-\kappa)}_{C_{\tau^{1/(k-2)}}L^{\infty}}
     +  \|\psi\|^{-1/2+3\alpha/(1-\kappa)}_{C_{\tau^{1/(k-2)}}L^{\infty}} +  \|\psi\|^{3\alpha(k-2)/2+1}_{C_{\tau^{1/(k-2)}}L^{\infty}} + \|\psi\|^{k-1}_{C_{\tau^{1/(k-2)}}L^{\infty}}
\end{align}
{\bf Step 5. Bound for $\psi$ in $C_{\tau^{1/(k-2)}}L^{\infty}$} \\
We estimate $\Psi$ in $C_{\tau^{1+1/(k-2)}}L^{\infty}$. Similar with estimates (\ref{E41})-(\ref{E44}), we have
\begin{align}\label{E51}
      \|\phi^{\sharp} \circ \xi\|_{C_{\tau^{1+1/(k-2)}}L^{\infty}} \lesssim & \|\xi\|_{\mathscr{C}^{-1-\kappa}}\|\phi^{\sharp}\|_{C_{\rho}\mathscr{C}^{2\alpha}} \nonumber \\
    \lesssim & 1+\|\psi\|_{C_{\rho}\mathscr{C}^{\alpha}}  +\|\psi\|_{C^{\alpha/2}_{\rho}L^{\infty}}+ \|\psi\|_{C_{\tau^{1/(k-2)}}L^{\infty}}.
\end{align}
\begin{equation}\label{E52}
    \|\psi\circ\xi\|_{C_{\tau^{1+1/(k-2)}}L^{\infty}}  \lesssim \|\psi\|_{C_{\rho}\mathscr{C}^{2\alpha }},
\end{equation}
\begin{align}
     \|\mathscr{U}_{\leq}(\vartheta\diamond\xi) \prec (\psi+\phi)\|_{{\mathscr{C}_{\tau^{1+1/(k-2)}}L^{\infty}}}
    \lesssim & \|\vartheta\diamond\xi\|_{\mathscr{C}^{-2\kappa}}\|\psi+\phi\|_{C_{\rho}\mathscr{C}^{\alpha}} \nonumber \\
    \lesssim &  1+\|\psi\|_{{C_{\rho} \mathscr{C}^{\alpha}}}+\|\psi\|_{C_{\tau^{1/(k-2)}}L^{\infty}}
\end{align}
\begin{align}
     \|(\vartheta\diamond\xi) \circ (\psi+\phi)\|_{{C_{\tau^{1+1/(k-2)}}L^{\infty}}}
    \lesssim & \|\vartheta\diamond\xi\|_{\mathscr{C}^{-2\kappa}}\|\psi+\phi\|_{C_{\rho}\mathscr{C}^{\alpha}} \nonumber \\
    \lesssim &  1+\|\psi\|_{{C_{\rho}\mathscr{C}^{\alpha}}}+\|\psi\|_{C_{\tau^{1/(k-2)}}L^{\infty}},
\end{align}
The commutator estimate Lemma \ref{commutatorE} implies that
\begin{align}
    \|C(\psi + \phi,\vartheta,\xi)\|_{{C_{\tau^{1+1/(k-2)}}L^{\infty}}} \lesssim & \|\psi + \phi\|_{{C_{\rho} \mathscr{C}^{\alpha}}} \|\xi\|_{-1-\kappa}\|\vartheta\|_{1-\kappa} \nonumber \\
    \lesssim & 1+\|\psi\|_{{C_{\rho} \mathscr{C}^{\alpha}}}+\|\psi\|_{C_{\tau^{1/(k-2)}}L^{\infty}}.
\end{align}
According to Lemma \ref{Localization} and the choosing of $L$ and $K$, we have
\begin{align}
     & \|(\psi+\phi)\prec \mathscr{U}_{\leq}(\vartheta\circ\xi) \|_{C_{\tau^{1+1/(k-2)}}L^{\infty} }+ \|(\psi+\phi)\prec \mathscr{U}_{\leq} \xi \|_{C_{\tau^{1+1/(k-2)}}L^{\infty} } \nonumber \\
     \lesssim & \|\psi + \phi\|_{C_{\tau^{1/(k-2)}}L^{\infty}}(\|\mathscr{U}_{\leq} (\vartheta\circ\xi)\|_{\mathscr{C}^{3\alpha-2}}+ \|\mathscr{U}_{\leq} \xi\|_{\mathscr{C}^{3\alpha-2}}) \nonumber \\
     \lesssim & 2^{(3\alpha-1+\kappa)L}\|\xi\|_{\mathscr{C}^{-1-\kappa}}\|\psi + \phi\|_{C_{\tau^{1/(k-2)}}L^{\infty}} + 2^{(3\alpha-2+2\kappa)K}\|\vartheta\circ\xi\|_{\mathscr{C}^{-2\kappa}}\|\psi + \phi\|_{C_{\tau^{1/(k-2)}}L^{\infty}} \nonumber \\
     \lesssim & 1+\|\psi\|^{3\alpha/(1-\kappa)}_{C_{\tau^{1/(k-2)}}L^{\infty}},
\end{align}
and
\begin{align}
       & \|(\psi+\phi) \succ \mathscr{U}_{\leq}(\vartheta\circ\xi)\|_{C_{\tau^{1+1/(k-2)}}L^{\infty}}+ \|(\psi+\phi) \succ \mathscr{U}_{\leq}\xi\|_{C_{\tau^{1+1/(k-2)}}L^{\infty}} \nonumber \\
        \lesssim & (\|\mathscr{U}_{\leq}(\vartheta\circ\xi)\|_{\mathscr{C}^{2\alpha-2}} + \|\mathscr{U}_{\leq}\xi\|_{\mathscr{C}^{2\alpha-2}}) \|\psi+ \phi\|_{{C_{\rho} \mathscr{C}^{\alpha}}} \nonumber\\
        \lesssim & (2^{(2\alpha-1+\kappa)L}+1)(1+\|\psi\|_{C_{\tau^{1/(k-2)}}L^{\infty}} +\|\psi\|_{{C_{\rho} \mathscr{C}^{\alpha}}}) \nonumber \\
           \lesssim  & (1+\|\psi\|^{-1+2\alpha/(1-\kappa)}_{C_{\tau^{1/(k-2)}}L^{\infty}})(1+\|\psi\|_{C_{\tau^{1/(k-2)}}L^{\infty}} +\|\psi\|_{{C_{\rho} \mathscr{C}^{\alpha}}}).
\end{align}
By (\ref{Ephi1}) and the dissipative assumption (\ref{assumf}) of $f$, we have
\begin{align}\label{E58}
    \|f(\psi+\phi)-f(\psi)\|_{C_{\tau^{1+1/(k-2)}}L^{\infty}} & \lesssim \|f'(\psi+\phi)\|_{C_{\rho^{k-2}}L^{\infty}}\|\psi\|_{C_{\tau^{1/(k-2)}}L^{\infty}} \nonumber \\
    & \lesssim (1+\|\psi\|^{k-2}_{C_{\tau^{1/(k-2)}}L^{\infty}})\|\psi\|_{C_{\tau^{1/(k-2)}}L^{\infty}}.
\end{align}
Combining with above estimates and (\ref{psi}), we obtain
\begin{equation*}
        \| \Psi \|_{C_{\tau^{1+1/(k-2)}}L^{\infty}} \lesssim  1+ \lambda  \|\psi\|^{k-1}_{C_{\tau^{1/(k-2)}}L^{\infty}}  +\|\psi\|^{-1/3+4/3(1-\kappa)}_{C_{\tau^{1/(k-2)}}L^{\infty}}+  \|\psi\|^{-1/2+2/(1-\kappa)}_{C_{\tau^{1/(k-2)}}L^{\infty}} +  \|\psi\|^{2(k-2)/2+1}_{C_{\tau^{1/(k-2)}}L^{\infty}}
\end{equation*}
for every $\lambda \in [0,1)$. Then by parabolic coercive estimates from Lemma \ref{nlSch} and weighted Young inequality, we obtain
\begin{equation}
    \|\psi\|_{C_{\tau^{1/(k-2)}}L^{\infty}} \lesssim 1+\|\Psi\|^{1/(k-1)}_{C_{\tau^{1/(k-2)}}L^{\infty}}  \lesssim 1.
\end{equation}

\subsection{Existence}

In this subsection, we prove the following existence result by a smooth approximation and compactness.

Let $u_{\epsilon}$ be a solution to the approximation equation
\begin{equation}
    \partial_t u_{\epsilon} + \mathscr{L}u_{\epsilon} = f(u_{\epsilon})+ u_{\epsilon}\diamond\xi_{\epsilon}, \quad u_{\epsilon}(0)=u_{0,\epsilon}.
\end{equation}
where $\xi_{\epsilon}\in C^{\infty}(\mathbb{T}^2)$ is the mollification of the spatial white noise $\xi$, $u_{\epsilon}\diamond \xi_{\epsilon}$ is the approximation of $ u\diamond \xi$, and $u_{0,\epsilon}$ is a smooth approximation of the initial value $u_0$. By Lemma \ref{Exapp}, for every $\epsilon\in (0,1)$ and $T>0$, there exists a unique classical solution $u_{\epsilon} \in C^{\infty}([0,T]\times\mathbb{T}^2)$ to the approximation equation.

\begin{thm}\label{Gexistence}
Let $ u_0\in \mathscr{C}^{-1}$, $\alpha \in [2/3,1)$. Then there exists a solution $(\phi, \psi, \phi^{\sharp})$ to system (\ref{decGPAM}) with
\begin{align*}
    \phi \in & [C_{\tau^{1/(k-2)+\alpha/2}}\mathscr{C}^{\alpha} \cap C^{\alpha/2}_{\tau^{1/(k-2)+\alpha/2}}L^{\infty}] \\
    \psi \in & [C_{\rho}\mathscr{C}^{3\alpha} \cap C^1_{\rho}L^{\infty} \cap C_{\tau^{1/(k-2)}}L^{\infty}] \\
    \phi^{\sharp} \in & [C_{\rho}\mathscr{C}^{2\alpha}\cap C^{\alpha}_{\rho}L^{\infty}],
\end{align*}
such that $u=\phi+\psi$ is a paracontrolled solution to the nonlinear parabolic Anderson model equation.
\end{thm}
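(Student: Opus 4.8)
The plan is to realise the solution as a limit of the smooth approximations $u_\epsilon = \phi_\epsilon + \psi_\epsilon$ produced by Lemma \ref{Exapp}, using the uniform a priori bounds of Section~3.1 together with a compactness argument. First I would fix $\epsilon \in (0,1)$ and $T>0$ and, for the classical solution $u_\epsilon$ of the mollified equation, carry out exactly the same decomposition $u_\epsilon = \phi_\epsilon + \psi_\epsilon$ with remainder $\phi_\epsilon^{\sharp} = \phi_\epsilon - \rho^{-1}([\rho(\psi_\epsilon + \phi_\epsilon)] \pprec \vartheta_\epsilon)$ as in the continuous case, and rerun Steps~1--5. The decisive point is that every constant in the estimates (\ref{Ephi1})--(\ref{psi}) depends on the stochastic data only through $\|\xi_\epsilon\|_{\mathscr{C}^{-1-\kappa}}$ and $\|\vartheta_\epsilon \circ \xi_\epsilon - C_\epsilon\|_{\mathscr{C}^{-2\kappa}}$, which by Lemma \ref{renormalize} are bounded uniformly in $\epsilon$ (almost surely along a subsequence). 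Since the closing bound $\|\psi_\epsilon\|_{C_{\tau^{1/(k-2)}}L^\infty} \lesssim 1$ of Step~5 is uniform in the initial datum, the bounds for $\phi_\epsilon$, $\psi_\epsilon$ and $\phi_\epsilon^{\sharp}$ in the three product spaces of the statement are uniform in $\epsilon$.

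Next I would extract convergent subsequences by compactness. Because $\mathbb{T}^2$ is compact, the embedding $\mathscr{C}^{\gamma} \hookrightarrow \mathscr{C}^{\gamma'}$ is compact for $\gamma' < \gamma$, so I would apply Arzel\`a--Ascoli (equivalently the Aubin--Lions lemma) to the reweighted families, e.g. to $\rho\,\psi_\epsilon$, which is bounded in $C([0,T];\mathscr{C}^{3\alpha})$ and equi-Lipschitz in time into $L^\infty$ by the $C^1_\rho L^\infty$ bound; interpolation then gives time-equicontinuity into $\mathscr{C}^{3\alpha-\delta}$ and pointwise relative compactness there. The analogous argument applies to $\phi_\epsilon$ (using the $C^{\alpha/2}_{\tau^{1/(k-2)+\alpha/2}}L^\infty$ bound) and to $\phi_\epsilon^{\sharp}$ (using $C^\alpha_\rho L^\infty$). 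Passing to a diagonal subsequence over $T\uparrow\infty$, I obtain limits with $\phi_\epsilon \to \phi$ in $C_{\tau^{1/(k-2)+\alpha/2}}\mathscr{C}^{\alpha'}$, $\psi_\epsilon \to \psi$ in $C_\rho \mathscr{C}^{3\alpha'}$ and $\phi_\epsilon^{\sharp} \to \phi^{\sharp}$ in $C_\rho \mathscr{C}^{2\alpha'}$ for every $\alpha' < \alpha$, while lower semicontinuity of the Besov norms transfers the uniform bounds to the limit in the spaces with exponent $\alpha$ stated in the theorem.

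Then I would pass to the limit in the system (\ref{decGPAM}). The linear drift and the nonlinearity $f(\psi_\epsilon) \to f(\psi)$ converge using the strong $C_{\tau^{1/(k-2)}}L^\infty$ convergence and the polynomial form of $f$. Stability of the paracontrolled ansatz $\phi^{\sharp} = \phi - \rho^{-1}([\rho(\psi+\phi)] \pprec \vartheta)$ follows from continuity of $\pprec$ (Lemma \ref{pprec}) together with $\vartheta_\epsilon \to \vartheta$ in $\mathscr{C}^{1-\kappa}$. The definitions of $\Phi$ and $\Psi$ are then passed through term by term: the regular paraproducts and the localized pieces converge by Lemmas \ref{Bparaproduct} and \ref{Localization}, while the genuinely singular contributions --- the resonant product $\phi_\epsilon^{\sharp}\circ\xi_\epsilon$, the commutator $C(\psi_\epsilon+\phi_\epsilon,\vartheta_\epsilon,\xi_\epsilon)$, and the renormalized term $(\psi_\epsilon+\phi_\epsilon)(\vartheta_\epsilon\circ\xi_\epsilon - C_\epsilon)$ --- converge by the multilinear continuity estimates of Lemmas \ref{Bparaproduct} and \ref{commutatorE} combined with the strong convergence of the components and Lemma \ref{renormalize}. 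This identifies $(\phi,\psi,\phi^{\sharp})$ as a solution of (\ref{decGPAM}) with $u=\phi+\psi$ paracontrolled by $\vartheta$.

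The hard part is exactly this last step. Classically $(\psi_\epsilon+\phi_\epsilon)\diamond\xi_\epsilon$ is an honest product, but only the renormalized combination admits a finite limit, so one must verify that the individually divergent pieces reorganise, through the paracontrolled ansatz, into terms that are each continuous in the data. The multilinear lemmas deliver this only when the convergence of $\phi_\epsilon^{\sharp},\psi_\epsilon,\phi_\epsilon$ is strong in spaces whose regularity indices still satisfy the required sign conditions --- for the commutator one needs $\alpha+(\alpha-2)+\alpha = 3\alpha-2 > 0$ and $2\alpha-2 < 0$, which is why the compactness must be arranged to sacrifice only an arbitrarily small amount of regularity so that $3(\alpha-\delta)-2 > 0$ persists. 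Checking that these index constraints survive the $\delta$-loss and that, after renormalization, no resonant contribution is left uncontrolled is the technical core of the existence proof.
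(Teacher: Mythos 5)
Your proposal follows essentially the same route as the paper: smooth approximation via Lemma \ref{Exapp}, uniform a priori bounds from rerunning Steps 1--5, compactness through Besov embedding, Arzel\`a--Ascoli and Aubin--Lions with an arbitrarily small loss of regularity, term-by-term passage to the limit in $\Phi_\epsilon$, $\Psi_\epsilon$ using the multilinear continuity estimates, and recovery of the stated regularity by weak-$\ast$ lower semicontinuity of the norms on Littlewood--Paley blocks. The argument is correct and matches the paper's proof in all essential respects.
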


\begin{proof}
Let $\xi_{\epsilon}$ be a smooth approximation of the spatial white noise $\xi$, and let $u_{0,\epsilon}$ be  a smooth approximation of the initial value $u_0$. Then by Lemma \ref{Exapp}, for every $\epsilon\in (0,1)$ and $T>0$, there exists a unique classical solution $u_{\epsilon} \in C^{\infty}([0,T]\times\mathbb{T}^2)$ to
\begin{equation}
    \partial_t u_{\epsilon}+\mathscr{L}u_{\epsilon} = f(u_{\epsilon})+ u_{\epsilon}\xi_{\epsilon}-C_{\epsilon}u_{\epsilon}, \quad u_{\epsilon}(0)=u_{0,\epsilon}.
\end{equation}
Where $c_{\epsilon}>0$ is the renormalization constant.
We decompose $u_{\epsilon}=\psi_{\epsilon}+\phi_{\epsilon}$ as same as above, such that the pair $(\psi_{\epsilon}, \phi_{\epsilon})$ satisfies the system,
\begin{equation}\label{appdecGPAM1}
  \left\{
   \begin{aligned}
   & \partial_{t}\phi_{\epsilon}+\mathscr{L}\phi_{\epsilon} = \Phi_{\epsilon}, \quad \phi_{\epsilon}(0) = \phi_{0,\epsilon}=u_{0,\epsilon}\\
   & \partial_{t}\psi_{\epsilon}+\mathscr{L}\psi_{\epsilon} = f(\psi_{\epsilon})+ \Psi_{\epsilon}, \quad \psi(0)=0,
   \end{aligned}
   \right.
\end{equation}
where the definitions of $\Phi_{\epsilon}$ and $\Psi_{\epsilon}$ are same as $\Phi$ and $\Psi$. Same as $\phi^{\sharp}$, we also define $\phi^{\sharp}_{\epsilon} = \phi- (\psi_{\epsilon} + \phi_{\epsilon})\pprec \vartheta_{\epsilon}$. From a priori estimates, for any $T>0$ the approximation $(\psi_{\epsilon}, \phi_{\epsilon}, \phi^{\sharp}_{\epsilon})$ have the following uniformly bounds  (uniformly in $\epsilon \in (0,1)$)
\begin{align}
    & \|\tau^{1/(k-2)+\alpha/2}\phi_{\epsilon}\|_{C_{T}\mathscr{C}^{\alpha}}+\|\tau^{1/(k-2)+\alpha/2}\phi_{\epsilon}\|_{C^{\alpha/2}_{T}L^{\infty}} \lesssim 1, \nonumber \\
    &  \|\rho\psi_{\epsilon}\|_{C_{T}\mathscr{C}^{3\alpha} } + \|\rho\psi_{\epsilon}\|_{C^1_{T}L^{\infty}} \lesssim 1 \nonumber \\
    &  \|\rho\phi_{\epsilon}^{\sharp}\|_{C_{T}\mathscr{C}^{2\alpha}} + \|\rho\phi_{\epsilon}^{\sharp}\|_{C^{\alpha}_{T}L^{\infty}} \lesssim 1 \nonumber
\end{align}
Due to the Besov embedding, Arzela-Ascoli theorem and Aubin-Lions argument, the space
\begin{equation*}
    [C_{T}\mathscr{C}^{\alpha} \cap C^{\alpha/2}_{T}L^{\infty}] \times [C_{T}\mathscr{C}^{3\alpha} \cap C^1_{T}L^{\infty} ]\times[C_{T}\mathscr{C}^{2\alpha} \cap C^{\alpha}_{T}L^{\infty}]
\end{equation*}
is compactly embedded into
\begin{equation*}
    [C_{T}\mathscr{C}^{\alpha-\delta} \cap C_{T}^{(\alpha-\delta)/2}\mathscr{C}^{-\gamma}] \times [C_{T}\mathscr{C}^{3\alpha-\delta} \cap C^{1-\delta}_{T}\mathscr{C}^{-\gamma} ]\times[C_{T}\mathscr{C}^{2\alpha-\delta} \cap C^{\alpha-\delta}_{T}\mathscr{C}^{-\gamma}]
\end{equation*}
provided $\delta \in (0,\alpha)$ and $\gamma \in (0,1)$ are chosen small. We refer Lemma 1 and Theorem 5 in [\refcite{S1987}] for more details. Thus there exists a convergent subsequence (still denoted $(\psi_{\epsilon}, \phi_{\epsilon}, \phi^{\sharp}_{\epsilon})$) which converge to some $(\psi, \phi, \phi^{\sharp})$ in above space.

Moreover, for any $T>0$, by linearity of the localizers $\mathscr{U}_{>}$, $\mathscr{U}_{\leq}$, and using same estimates in Section 3.2 we have
\begin{equation*}
    \rho\Phi_{\epsilon} \rightarrow \rho\Phi \quad \text{in} \quad C_T\mathscr{C}^{\alpha -2 - \delta}
\end{equation*}
and
\begin{equation*}
    \rho\Psi_{\epsilon} \rightarrow \rho\Psi \quad \text{in} \quad C_T\mathscr{C}^{3\alpha-2-\delta}.
\end{equation*}
Passing to the limit in (\ref{appdecGPAM1}). Thus limit $(\phi, \psi, \psi^{\sharp})$ solves the system (\ref{decGPAM}) in distributional sense.

Now we turn to show that
\begin{align*}
    \phi \in & [C_{\tau^{1/(k-2)+\alpha/2}}\mathscr{C}^{\alpha} \cap C^{\alpha/2}_{\tau^{1/(k-2)+\alpha/2}}L^{\infty}] \\
    \psi \in & [C_{\rho}\mathscr{C}^{3\alpha} \cap C^1_{\rho}L^{\infty} \cap C_{\tau^{1/(k-2)}}L^{\infty}] \\
    \phi^{\sharp} \in & [C_{\rho}\mathscr{C}^{2\alpha}\cap C^{\alpha}_{\rho}L^{\infty}],
\end{align*}
By a priori estimates for $(\phi_{\epsilon}, \psi_{\epsilon}, \phi_{\epsilon}^{\sharp})$, the Littlewood-Paley blocks $\Delta_i\phi_{\epsilon}$, $\Delta_i\psi_{\epsilon}$, $\Delta_i\phi_{\epsilon}^{\sharp}$ have uniform bounds
\begin{align*}
   & \|\tau(t)^{1/(k-2)+\alpha/2}\Delta_i\psi_{\epsilon}(t)\|_{L^{\infty}} \lesssim 1, \\
   & \|\rho(t)\Delta_i\phi_{\epsilon}(t)\|_{L^{\infty}}\lesssim 1, \\
   & \|\rho(t)\Delta_i\phi^{\sharp}_{\epsilon}(t)\|_{L^{\infty}} \lesssim 1
\end{align*}
uniform in $\epsilon$, $t$, and $i$. From weak $\ast$ lower semicontinuous of $L^{\infty}$ norm, we deduce that
\begin{align*}
    \|\tau(t)^{1/(k-2)+\alpha/2}\Delta_i\phi(t)\|_{L^{\infty}} \leq & \liminf_{\epsilon\rightarrow 0}\|\rho(t)\tau(t)^{\alpha/2}\Delta_i\phi_{\epsilon}(t)\|_{L^{\infty}} \\
    \leq & \liminf_{\epsilon\rightarrow 0}\|\phi_{\epsilon}\|_{C_{\tau^{1/(k-2)+\alpha/2}} \mathscr{C}^{\alpha}}2^{-i\alpha} \\
    \lesssim & 2^{-i\alpha} ,
\end{align*}
\begin{align*}
    \|\rho(t)\Delta_i\psi(t)\|_{L^{\infty}} \leq & \liminf_{\epsilon\rightarrow 0}\|\rho(t)\Delta_i\psi_{\epsilon}(t)\|_{L^{\infty}} \\
    \leq & \liminf_{\epsilon\rightarrow 0}\|\psi\|_{C_{\rho}\mathscr{C}^{3\alpha}}2^{-i3\alpha} \\
    \lesssim & 2^{-i3\alpha},
\end{align*}
\begin{align*}
    \|\rho(t)\Delta_i\psi(t)\|_{L^{\infty}} \leq & \liminf_{\epsilon\rightarrow 0}\|\rho(t)\Delta_i\psi_{\epsilon}(t)\|_{L^{\infty}} \\
    \leq &  \liminf_{\epsilon\rightarrow 0}\|\psi_{\epsilon}\|_{C_{\rho} L^{\infty}} \\
    \lesssim & 1,    
\end{align*}
\begin{align*}
    \|\rho(t)\Delta_i\phi^{\sharp}(t)\|_{L^{\infty}} \leq & \liminf_{\epsilon\rightarrow 0}\|\rho(t)\Delta_i\phi^{\sharp}_{\epsilon}(t)\|_{L^{\infty}} \\
    \leq & \liminf_{\epsilon\rightarrow 0}\|\phi^{\sharp}_{\epsilon}\|_{C_{\rho} \mathscr{C}^{2\alpha}}2^{-i2\alpha} \\
    \lesssim & 2^{-i2\alpha}.
\end{align*}
Above estimates imply that
\begin{equation*}
   (\phi, \psi, \phi^{\sharp}) \in  L^{\infty}_{\tau^{1/(k-2)+\alpha/2}}\mathscr{C}^{\alpha} \times [L^{\infty}_{\rho}L^{\infty} \cap L^{\infty}_{\rho}\mathscr{C}^{3\alpha}]\times L^{\infty}_{\rho}\mathscr{C}^{2\alpha}.
\end{equation*}
For the time regularity, we have
\begin{align*}
    \|\tau(t)^{1/(k-2)+\alpha/2}\phi(t)-\tau(s)^{1/(k-2)+\alpha/2}\phi(s)\|_{L^{\infty}} \lesssim & \liminf_{\epsilon\rightarrow 0}\|\tau(t)^{1/(k-2)+\alpha/2}\phi_{\epsilon}(t)-\tau(s)^{1/(k-2)+\alpha/2}\phi_{\epsilon}(s)\|_{L^{2}} \nonumber \\
    \lesssim & \|\phi_{\epsilon}\|_{C_{\tau^{1/(k-2)+\alpha/2}}^{\alpha/2}L^{2}}|t-s|^{\alpha/2} \nonumber \\
    \lesssim & |t-s|^{\alpha/2},
\end{align*}
\begin{align*}
    \|\rho(t)\psi(t)-\rho(s)\psi(s)\|_{L^{\infty}} \lesssim & \liminf_{\epsilon\rightarrow 0}\|\rho(t)\psi_{\epsilon}(t)-\rho(s)\phi_{\epsilon}(s)\|_{L^{2}} \nonumber \\
    \lesssim & \|\psi_{\epsilon}\|_{C^{1}_{\rho}L^{2}}|t-s| \nonumber \\
    \lesssim & |t-s|,
\end{align*}
and
\begin{align*}
    \|\rho(t)\phi^{\sharp}(t)-\rho(s)\phi^{\sharp}(s)\|_{L^{\infty}} \lesssim & \liminf_{\epsilon\rightarrow 0}\|\rho(t)\phi^{\sharp}_{\epsilon}(t)-\rho(s)\phi^{\sharp}_{\epsilon}(s)\|_{L^{2}} \nonumber \\
    \lesssim & \|\phi^{\sharp}_{\epsilon}\|_{C^{\alpha}_{\rho}L^{\infty}}|t-s|^{\alpha} \nonumber \\
    \lesssim & |t-s|^{\alpha}.
\end{align*}
Then we obtain time regularity. The proof is complete.
\end{proof}

\subsection{Uniqueness}

In this subsection, we consider the uniqueness of the nonlinear parabolic Anderson model equation (\ref{GPAM}) via the classical energy estimate.

\begin{thm}\label{Uniq}
The solution of (\ref{GPAM}) in the sense of Theorem \ref{Gexistence} is unique.
\end{thm}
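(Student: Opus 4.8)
The plan is to prove uniqueness by a direct $L^2$ energy estimate on the difference of two solutions, combined with Gronwall's inequality. Suppose $u_1=\phi_1+\psi_1$ and $u_2=\phi_2+\psi_2$ are two paracontrolled solutions of \eqref{GPAM} with the same initial datum $u_0$ and the same realization of the stochastic data $(\xi,\vartheta,\vartheta\diamond\xi)$, and set $w:=u_1-u_2$. Since $u\diamond\xi$ is linear in $u$ and the renormalization constant $C_\epsilon$ is universal, the divergent constants cancel in the difference, so $u_1\diamond\xi-u_2\diamond\xi=w\diamond\xi$ is the renormalized product of $w$ with $\xi$; moreover $w$ is again paracontrolled by $\vartheta$, with remainder $w^\sharp:=w-w\pprec\vartheta\in\mathscr{C}^{2\alpha}\subset H^1$ (as $2\alpha>1$) and $w(0)=0$. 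Thus $w$ solves
\begin{equation*}
\partial_t w+\mathscr{L}w=\big(f(u_1)-f(u_2)\big)+w\diamond\xi,\qquad w(0)=0.
\end{equation*}

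First I would test this equation against $w$ in $L^2$, producing
\begin{equation*}
\tfrac12\tfrac{d}{dt}\|w\|_{L^2}^2+\|\nabla w\|_{L^2}^2+\mu\|w\|_{L^2}^2=\langle w,f(u_1)-f(u_2)\rangle+\langle w,w\diamond\xi\rangle .
\end{equation*}
The nonlinear term is immediate from the dissipative assumption \eqref{assumf}: writing $f(u_1)-f(u_2)=f'(\zeta)\,w$ with $\zeta$ between $u_1$ and $u_2$ and using $f'\le l$, one gets $\langle w,f(u_1)-f(u_2)\rangle\le l\|w\|_{L^2}^2$. Because $w$ itself only lives just below $H^1$ (its rough part $w\pprec\vartheta$ inherits the regularity of $\vartheta$), this energy identity is to be read at the level of the smooth approximations $u_{i,\epsilon}$ of Section 3.2, with all the bounds below uniform in $\epsilon$, and then passed to the limit.

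The heart of the argument, and the main obstacle, is to bound the singular quadratic form $\langle w,w\diamond\xi\rangle$ by $\delta\|\nabla w\|_{L^2}^2+C_\delta\|w\|_{L^2}^2$, i.e. a form (KLMN-type) estimate. I would expand $w\diamond\xi$ exactly as in Section 3,
\begin{equation*}
w\diamond\xi=w\prec\xi+w\succ\xi+(w\pprec\vartheta-w\prec\vartheta)\circ\xi+C(w,\vartheta,\xi)+w(\vartheta\diamond\xi)+w^\sharp\circ\xi,
\end{equation*}
pair each term with $w$, and estimate using the $H^\alpha$-versions of the paraproduct bounds (Lemma \ref{Bparaproduct}), the commutator estimate $C$ (Lemma \ref{commutatorE}), the $\mathscr{L}$-commutator (Lemma \ref{CE}), and above all the duality operator $D$ of Lemma \ref{Dm}. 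The genuinely dangerous terms are the rough paraproducts $\langle w,w\prec\xi\rangle$, $\langle w,w\succ\xi\rangle$ and the resonant pairing with the remainder, which cannot be controlled by naive $H^s$--$H^{-s}$ duality; here the role of $D$ is to trade a pairing $\langle w,\xi\circ v\rangle$ for $\langle w\prec v,\xi\rangle$ plus a bounded remainder $D(w,v,\xi)$ dominated by $\|w\|_{H^{\alpha}}\|v\|_{H^{\beta}}\|\xi\|_{\mathscr{C}^{-1-\kappa}}$, with $v$ either $w$ itself or the paracontrolled remainder $w^\sharp$. After these manipulations every contribution reduces to a product of two Sobolev norms of $w$ (or of $w$ and $w^\sharp$) with total differentiability strictly below $2$, times a finite norm of the stochastic data.

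Finally I would interpolate: by Lemma \ref{interpolationH} each such product obeys $\|w\|_{H^{s_1}}\|w\|_{H^{s_2}}\lesssim\delta\|\nabla w\|_{L^2}^2+C_\delta\|w\|_{L^2}^2$ whenever $s_1+s_2<2$, and likewise for the $w^\sharp$ contributions once $\|w^\sharp\|_{H^1}$ is controlled through its own smoother parabolic equation, obtained exactly as in \eqref{usharpg}. Choosing $\delta$ small enough absorbs all $\|\nabla w\|_{L^2}^2$ contributions into the dissipative term on the left, leaving
\begin{equation*}
\tfrac{d}{dt}\|w\|_{L^2}^2\le C(t)\,\|w\|_{L^2}^2,
\end{equation*}
with $C(t)$ locally integrable (built from $l$ and the a priori solution norms). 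Since $w(0)=0$, Gronwall's inequality forces $w\equiv0$, that is $u_1=u_2$. I expect the delicate points to be the rigorous justification of the energy identity for the below-$H^1$ field $w$ (handled through the smooth approximations and uniform bounds) and the bookkeeping needed to keep every term in the expansion of $\langle w,w\diamond\xi\rangle$ genuinely quadratic and absorbable, rather than merely linear in $w$.
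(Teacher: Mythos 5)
Your overall strategy --- subtract the two solutions, note that the renormalization constants cancel so that $w=u_1-u_2$ is again paracontrolled with $w(0)=0$, use $f'\le l$ for the nonlinear difference, expand $w\diamond\xi$ and absorb the singular quadratic form into the dissipation, then apply Gronwall --- is exactly the paper's. The one point where you deviate is essential, and it is where the argument breaks: you test the equation in $L^2$, whereas the paper takes the $H^{\alpha-1}$ inner product, and the $L^2$ estimate does not close. Concretely, the pairings $\langle w, w\prec\xi\rangle$ and $\langle w, w\succ\xi\rangle$ (and the resonant pairing with the remainder) can only be estimated by duality as $\|w\|_{H^{1+\kappa+\kappa'}}\,\|w\prec\xi\|_{H^{-1-\kappa-\kappa'}}\lesssim \|w\|_{H^{1+\kappa+\kappa'}}\,\|w\|_{L^2}\,\|\xi\|_{\mathscr{C}^{-1-\kappa}}$, because $w\prec\xi$ inherits the regularity $-1-\kappa$ of $\xi$; the operator $D$ of Lemma~\ref{Dm} only converts resonant pairings $\langle u, h\circ v\rangle$ into $\langle u\prec v,h\rangle$ and does not touch these paraproduct terms. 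A norm $\|w\|_{H^{1+\kappa}}$ of order strictly above $1$ is \emph{not} dominated by $\delta\|\nabla w\|_{L^2}^2+C_\delta\|w\|_{L^2}^2$: your claim that ``$s_1+s_2<2$ suffices'' fails whenever $\max(s_1,s_2)>1$, since no interpolation between $L^2$ and $H^1$ reaches $H^{1+\kappa}$ (Lemma~\ref{interpolationH} is stated only for $\beta\in(0,1)$). Worse, the solutions themselves only lie in $\mathscr{C}^\alpha$ with $\alpha<1$, and the paraproduct component $w\pprec\vartheta$ has regularity at most $1-\kappa$, so $\|\nabla w_\epsilon\|_{L^2}$ is not uniformly bounded along the smooth approximation and the limiting energy identity you invoke is not available with finite quantities on either side.

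The paper's remedy is precisely the choice of inner product: testing in $H^{\alpha-1}$ makes the dissipation $\|\nabla\zeta\|^2_{H^{\alpha-1}}\simeq\|\zeta\|^2_{H^{\alpha}}$, which is finite for paracontrolled solutions, and shifts every duality bound down by $1-\alpha$ derivatives, so that the worst term costs only $\|\zeta\|_{H^{2\alpha-1+\kappa}}\|\zeta\|_{H^{\kappa}}$ with $2\alpha-1+\kappa<\alpha$ (using $\alpha<1-\kappa$); this product does interpolate between $H^{\alpha-1}$ and $H^{\alpha}$ and can be absorbed by Young's inequality. To repair your argument you should either replace the $L^2$ pairing by the $H^{\alpha-1}$ pairing as in (\ref{testeq})--(\ref{testeq8}), or use a genuinely different device for the quadratic form (e.g.\ a paracontrolled test function adapted to the Anderson Hamiltonian as in Gubinelli--Ugurcan--Zachhuber); a naive $L^2$ duality bound cannot work.
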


\begin{proof}
Suppose $(\phi_1, \psi_1, \phi_1^{\sharp})$ and $(\phi_2, \psi_2, \phi_2^{\sharp})$ are two solutions of (\ref{GPAM}) which given in Theorem \ref{Gexistence}. Let $\zeta:= u_1 -u_2 = \psi_1 +\phi_1 -\psi_2 -\phi_2$, then $\zeta$ satisfies
\begin{equation}\label{zeta}
    \partial_{t}\zeta + \mathscr{L}\zeta-\zeta\diamond \xi= f(u_1) - f(u_2) , \quad \zeta(0)=0.
\end{equation}
Here, we use the simple paracontrolled $\zeta = \zeta\prec\vartheta + \zeta^{\sharp}$ to define $\zeta\diamond \xi$. Since $u =\phi+\psi = u \pprec\vartheta + \phi^{\sharp} + \psi $, the reminder $\zeta^{\sharp}$ is given by
\begin{align*}
   \zeta^{\sharp}:= & \zeta - \zeta\prec\vartheta \\
     = & (\psi_1 - \psi_2) + (\phi_1-\phi_2)-\zeta\prec\vartheta \\
     = & (\psi_1 - \psi_2) + ((\phi_1-\phi_2)\pprec \vartheta - (\phi_1-\phi_2)\prec \vartheta) - (\phi_1^{\sharp}-\phi_2^{\sharp}).
\end{align*}
The a priori estimates for $(\phi,\psi,\phi^{\sharp})$ yields that $\zeta^{\sharp}(t) \in \mathscr{C}^{2\alpha} \hookrightarrow H^{2\alpha}$. Thus $\zeta\diamond\xi$ is  given as follows
\begin{equation}\label{Wickp}
    \zeta\diamond\xi = \zeta \prec \xi + \zeta \succ \xi + \zeta^{\sharp}\circ\xi + C(\zeta,\vartheta, \xi)+ \zeta (\vartheta\diamond\xi).
\end{equation}

Now we multiply equation (\ref{zeta}) by $\zeta$, and take the $H^{\alpha-1}(\mathbb{T}^2)$ inner product to obtain
\begin{equation}\label{testeq}
    \frac{1}{2}\partial_t\|\zeta\|^2_{H^{\alpha-1}} + \|\nabla\zeta\|^2_{H^{\alpha-1}}+ \mu\|\zeta\|^2_{H^{\alpha-1}}  = \langle \zeta, f(u_1) - f(u_2)\rangle_{H^{\alpha-1}} + \langle \zeta, \zeta \diamond \xi\rangle_{H^{\alpha-1}}.
\end{equation}
We begin to estimate $\langle \zeta, \zeta \diamond \xi\rangle_{H^{\alpha-1}} $. By (\ref{Wickp}), this term can be decomposed as
\begin{align*}
   & \langle \zeta , \zeta \diamond \xi\rangle_{H^{\alpha-1}}  \\
  = &  \langle \zeta , \zeta \prec \xi\rangle_{H^{\alpha-1}} + \langle\zeta, \zeta \succ \xi\rangle_{H^{\alpha-1}} + \langle\zeta, \zeta^{\sharp}\circ\xi\rangle_{H^{\alpha-1}} + \langle \zeta , C(\zeta,\vartheta, \xi) \rangle_{H^{\alpha-1}} + \langle\zeta , \zeta (\vartheta\diamond\xi)\rangle_{H^{\alpha-1}}.
\end{align*}
By Lemma \ref{interpolationH} and weighted Young inequality, we have
\begin{align}\label{UE1}
    \langle \zeta,  \zeta \prec \xi  \rangle_{H^{\alpha-1}} + \langle \zeta, \zeta \succ \xi \rangle_{H^{\alpha-1}}\leq & \|\zeta\|_{H^{2\alpha-1+\kappa}}(\|\zeta \prec \xi \|_{H^{-1-\kappa}}+\|\zeta \succ \xi \|_{H^{-1-\kappa}}) \nonumber \\
    \lesssim & \|\zeta\|_{H^{2\alpha-1+\kappa}}\|\xi\|_{\mathscr{C}^{-1-\kappa}}\|\zeta\|_{H^{\kappa}} \nonumber \\
    \lesssim & \delta\|\nabla\zeta\|^2_{H^{\alpha-1}} + C_{\delta}\|\zeta\|^2_{H^{\alpha-1}},
\end{align}
By paraproduct estimates and Lemma \ref{Dm}, we have
\begin{align}\label{UE2}
   \langle\zeta , \zeta^{\sharp}\circ\xi\rangle_{H^{\alpha-1}} \leq & \|\zeta\|_{H^{2\alpha-1+\kappa}}\|\zeta^{\sharp} \circ \xi \|_{H^{-1-\kappa}} \nonumber \\
    \lesssim & \|\zeta\|_{H^{2\alpha-1+\kappa}}\|\xi\|_{\mathscr{C}^{-1-\kappa}}\|\zeta^{\sharp}\|_{H^{\kappa}} \nonumber \\
    \lesssim & \|\zeta\|_{H^{2\alpha-1+\kappa}}\|\xi\|_{\mathscr{C}^{-1-\kappa}}\|\zeta-\zeta\prec\vartheta\|_{H^{\kappa}} \nonumber \\
    \lesssim & \delta\|\nabla\zeta\|^2_{H^{\alpha-1}} + C_{\delta}\|\zeta\|^2_{H^{\alpha-1}}.
\end{align}
By paraproduct estimates, commutator estimates, and weight Young inequality, we have
\begin{align}\label{UE4}
    \langle \zeta, C(\zeta,\vartheta, \xi) \rangle_{H^{\alpha-1}} \leq & \|\zeta\|^2_{H^{\alpha-1}}+\|C(u,\vartheta,\xi)\|^2_{H^{\alpha-1}}\nonumber \\
    \lesssim & \|\zeta\|^2_{H^{\alpha-1}} + \|\zeta\|^2_{H^{1/2}}\|\vartheta\|_{\mathscr{C}^{1-\kappa}}\|\xi\|_{\mathscr{C}^{-1-\kappa}} \nonumber \\
    \lesssim & \delta\|\nabla\zeta\|^2_{H^{\alpha-1}} + C_{\delta}\|\zeta\|^2_{H^{\alpha-1}},
\end{align}
and
\begin{align}\label{UE5}
    \langle\zeta(t), \zeta(t) (\vartheta\diamond\xi)\rangle_{H^{\alpha-1}} \leq & \|\zeta(t)\|_{H^{\alpha-1}} \|\zeta (\vartheta\diamond\xi)\|_{H^{\alpha-1}} \nonumber\\
    \lesssim & \|\zeta(t)\|^2_{H^{\alpha-1}} + \|\zeta\|^2_{H^{1/2}}\|\vartheta\diamond\xi\|^2_{\mathscr{C}^{-2\kappa}} \nonumber \\
    \lesssim & \delta\|\nabla\zeta\|^2_{H^{\alpha-1}} + C_{\delta}\|\zeta(t)\|^2_{H^{\alpha-1}}.
\end{align}
From above estimates (\ref{UE1})-(\ref{UE5}), we have
\begin{equation}\label{UEE1}
    \langle \zeta, \zeta \diamond \xi\rangle_{H^{\alpha-1}} \lesssim \delta\|\nabla\zeta(t)\|^2_{H^{\alpha-1}} + C_{\delta}\|\zeta(t)\|^2_{H^{\alpha-1}}.
\end{equation}
Moreover, the assumption of $f$ implies that
\begin{align}\label{UEE6}
    \langle \zeta, (f(u_1)-f(u_2)) \rangle_{H^{\alpha-1}} \leq   l\| \zeta(t)\|^2_{H^{\alpha-1}}.
\end{align}
Plugging estimates (\ref{UEE1}) and (\ref{UEE6}) into (\ref{testeq}), and choosing $\delta$ small enough to absorb $\|\nabla\zeta(t)\|^2_{H^{\alpha-1}}$ in left hand side, we finally obtain
\begin{equation}\label{testeq8}
    \frac{1}{2}\partial_t \|\zeta(t)\|^2_{H^{\alpha-1}} \leq \delta\|\nabla\zeta(t)\|^2_{H^{\alpha-1}} + C_{\delta}\|\zeta(t)\|^2_{H^{\alpha-1}}.
\end{equation}
Since $\zeta(0) = \zeta^{\sharp}(0)=0$, by Gr\"{o}nwall's inequality, we deduce that $\zeta(t)=\zeta^{\sharp}(t)=0$ for every $t>0$.

Since $\phi_1 - \phi_2$ satisfies the linear equation
\begin{equation*}
    \mathscr{L}(\phi_1 - \phi_2) = \zeta \prec \mathscr{U}_{>} \xi + \zeta \succ \mathscr{U}_{>} \xi + \zeta \succ \mathscr{U}_{>}(\vartheta\circ\xi) - \zeta \prec \mathscr{U}_{>}(\vartheta\circ\xi), \quad (\phi_1- \phi_2)(0)=0,
\end{equation*}
if $\zeta(t)=0$ for every $t>0$, then $\phi_1 = \phi_2$, $\psi_1=\psi_2$. Furthermore, note that $\zeta^{\sharp}$ is given by
\begin{align*}
    \zeta^{\sharp}:& =(\phi_1-\phi_2) - \zeta\prec\vartheta \\
    & =(\phi_1^{\sharp}-\phi_2^{\sharp})-\rho^{-1}\left((\rho\zeta)\prec \vartheta- (\rho\zeta)\pprec \vartheta \right).
\end{align*}
If $\zeta = \zeta^{\sharp}=0$, then $\phi_1^{\sharp}=\phi_2^{\sharp}$. Thus the solution of (\ref{GPAM}) is unique. 
\end{proof}

\section{Conclusion}

We have established the global well-posedness result for the nonlinear parabolic Anderson model equation in paracontrolled distribution frame-work and parabolic Schauder and coercive estimates. Furthermore, we have also proved the uniqueness by using direct energy estimates. 

We point out that another possible method for the nonlinear parabolic Anderson model equation is using some properties of Anderson Hamiltonian $\mathscr{H}$ and employing $L^2$ energy estimates directly. In [\refcite{GUZ2020}], the authors using this method to study semilinear Schr\"{o}dinger and Wave equations with Anderson Hamiltonian $\mathscr{H}$. But if we use this methods, the regularity of the solution is lower than our results, and we need further regularity estimates for the equation.

There are still some possible extensions of our results. In fact, the noise term $u\diamond\xi$ can be replaced by more general case, such as $g(u)\diamond\xi$. We could extend the domain $\mathbb{T}^2$ to the whole space. To study the parabolic Anderson model equation on $\mathbb{R}^{+}\times\mathbb{R}^2$ we have to use some spatial weight. We could also consider the equation in higher dimension $(d= 3)$ and more singular noise, such as the spatial time white noise. The dynamical properties of the parabolic Anderson model equation are also interesting to investigate in further works.

\section*{Acknowledgments}
We are very grateful to our reviewer for valuable comments and editor's help.



\begin{thebibliography}{0}




\bibitem{A1958} P. W. Anderson, Absence of Diffusion in Certain Random Lattices, {\it Phys. Rev.}  {\bf109}(1958), 1492--1505.

\bibitem{AC2015} R. Allez and K. Chouk, The continuous Anderson hamiltonian in dimension two, {\it ArXiv preprints arXiv:1511.02718.}
 (2015).
 
\bibitem{AK17} S. Albeverio and S. Kusuoka, The invariant measure and the flow associated to the $\Phi^4_3$ quantum field model, {\it  ArXiv preprints arXiv:1711.07108.} (2017).

\bibitem{BCD2011} H. Bahouri, J.-Y., Chemin and R. Danchin,  {\it Fourier Analysis and Nonlinear Partial Differential Equations}.  (Spring, Berlin, 2011).

\bibitem{BDH2015} I. Bailleul, A. Debussche and M. Hofmanov\'a, Quasilinear generalized parabolic Anderson model equation, {\it Stoch. PDE: Anal. Comp.} {\bf 7}(2019), 40--63.

\bibitem{CGP2015} K. Chouk, J. Gairing and N. Perkowski, An invariance principle for the two-dimensional parabolic Anderson model with small potential, {\it Stoch. PDE: Anal. Comp.}  {\bf 5}(2015), 520--558.

\bibitem{CM1994} R. Carmona and S. A. Molchanov, {\it Parabolic Anderson problem and intermittency}. Vol. 518. (American Mathematical Soc., 1994), 

\bibitem{DdMH2015} A. Debussche, S. de Moor and M. Hofmanov\'a, A regularity result for quasilinear stochastic partial differential equations of parabolic type, {\it SIAM J. Math. Anal.}  {\bf 47}(2015), 1590--1614.

\bibitem{GIP2015} M. Gubinelli, P. Imkeller and N. Perkowski, Paracontrolled distributions and singular pdes, {\it Forum of Mathematics, Pi.} {\bf 3}(2015), 1--75.

\bibitem{GH2017} Y. Gu and J. Huang, Chaos expansion of 2D parabolic Anderson model, {\it Electron. Commun. Probab.} {\bf 23}(2018), 1--10.

\bibitem{GH2019} M. Gubinelli and M. Hofmanov\'a, Global Solutions to Elliptic and Parabolic $\Phi^4$ Models in Euclidean Space, {\it Commun. Math. Phys.} {\bf 368}(2019), 1201--1266.

\bibitem{GUZ2020} M. Gubinelli, B. Ugurcan and I. Zachhuber, Semilinear evolution equations for the Anderson Hamiltonian in two and three dimensions, {\it  Stoch. PDE: Anal. Comp.} {\bf 8}(2020), 82--149.

\bibitem{H2014} M. Hairer, A theory of regularity structures,  {\it Invent. math.} {\bf 198}(2014), 269--504. 

\bibitem{HL2015} M. Hairer and C. Labb\'e, A simple construction of the continuum parabolic Anderson model on $\mathbf{R}^2$, {\it Electron. Commun. Probab.}  {\bf 20}(2015), 1--11 . 

\bibitem{H2002} Y. Hu, Chaos expansion of heat equations with white noise potentials, {\it Potential Anal.} {\bf 16}(2002), 45--66.

\bibitem{HW2021} Y. Hu and X. Wang, Stochastic Heat Equation with general noise, {\it Annales de l'Institut Henri Poincare (B) Probabilites et statistiques.}. {\bf 58}(2022), 379--423.

\bibitem{K2015} W. K\"onig, {\it The parabolic anderson model. random walk in random potential}. (Birkh\"auser, 2016).

\bibitem{KL2017} H.-J. Kim and S. V. Lototsky, Time-homogeneous parabolic Wick–Anderson model in one space dimension: regularity of solution, {\it Stoch PDE: Anal. Comp.}, {\bf 5}(2017), 559--591.

\bibitem{L2019} C. Labb\'e, The continuous Anderson hamiltonian in $d \leq 3$, {\it J. Funct. Anal.} {\bf 277}(2019), 3187--3235.

\bibitem{LS2009} S. V. Lototsky and B. L. Rozovskii, Stochastic partial differential equations driven by purely spatial noise, {\it Siam J. Math. Anal.}  {\bf 41}(2009), 1295--1322.

\bibitem{MW2017} J.-C. Mourrat and H. Weber, The dynamic $\Phi^4_3$ model comes down from infinity, {\it Commun. Math. Phys.} {\bf 356}(2017), 673--753.

\bibitem{S1987} J. Simon, Compact sets in the space $L^p(0,T; B)$, {\it Ann. Mat. Pura Appl.} {\bf 146}(1987), 65--96.

\bibitem{J2001} J. C. Robinson, {\it Infinite-dimensional Dynamical Systems an Introduction to Dissipative Parabolic PDEs and the Theory of Global Attractors}, (Cambridge University Press, Cambridge, 2001).

\bibitem{ZD2021} Q. Zhang and J. Duan, Energetic Variation with the Anderson Hamiltonian {\it ArXiv preprints arXiv:2111.10313.} (2021).



\end{thebibliography}
\end{document}